\def\eric#1{}
\def\linji#1{}
\def\ricardo#1{}
\def\juan#1{}
\def\daniel#1{}
\newcommand{\eee}{{\mathrm{e}}}   
\newcommand{\indicator}[1]{{\mathbf{1}\!\left({#1}\right)}}
\newcommand{\Probability}{\mathrm{Pr}}
\newcommand{\Prob}[1]{{\Probability\left[{#1}\right]}}
\newcommand{\ProbSub}[2]{{\Probability_{#1}\left[{#2}\right]}}
\newcommand{\ProbCond}[2]{{\Probability\left[{#1} \mid {#2} \right]}}
\newcommand{\Expectation}{\mathrm{E}}
\newcommand{\Exp}[1]{{\Expectation\left[{#1}\right]}}
\newcommand{\ExpSub}[2]{{\Expectation_{#1}\left[{#2}\right]}}
\newcommand{\muhbd}[2]{{\mu_{#1,#2}}}
\def\E{{\cal E}}
\def\A{{\textrm{BW}}}
\def\Pa{{\cal P}}
\def\eps{\epsilon}
\def\gap{c_{gap}}
\def\GammaN{{\Gamma_{\omega}}}
\def\BD{{\Gamma}}
\def\hp{{\ell}}
\def\tg{{f}}
\def\hv{{\hat{u}}}
\def\wt{{Z}}
\newcommand{\Trel}{{T_{\rm{relax}}}}
\newcommand{\Tmix}{{T_{\rm{mix}}}}
\newcommand{\BDU}[1]{{\BD^{\textrm{1}}_{#1}}}
\newcommand{\BDL}[1]{{\BD^{\textrm{2}}_{#1}}}
\def\BW{{b_0(\delta)}}
\def\fh{\frac{1.01^{1/b}}{1+\omega}}
\newtheorem{theorem}{Theorem}
\newtheorem{lemma}[theorem]{Lemma}
\newtheorem{proposition}[theorem]{Proposition}
\newtheorem{corollary}[theorem]{Corollary}
\newtheorem{example}[theorem]{Example}
\theoremstyle{remark}
\newtheorem*{remark*}{Remark}
\title{Phase Transition for Glauber Dynamics for Independent Sets on Regular Trees}
\author{Ricardo Restrepo\thanks{School of Mathematics, Georgia
Institute of Technology, Atlanta GA 30332.
Email: restrepo@math.gatech.edu.
Research supported in part by NSF grant CCF-0910584.
}
\thanks{Universidad de Antioquia, Departamento de Matematicas, Medellin, Colombia. Scholarship `200~years'.}
\and
Daniel Stefankovic\thanks{
Department of Computer Science, University of Rochester,
Rochester, NY 14627.  Email: stefanko@cs.rochester.edu.
Research supported in part by NSF grant CCF-0910415.
}
\and
Juan C. Vera\thanks{
Department of Management Sciences,
University of Waterloo, Waterloo ON.
Email: jvera@waterloo.edu.
}
\and
Eric Vigoda\thanks{School of Computer Science, Georgia
Institute of Technology, Atlanta GA 30332.
Email: \{vigoda,ljyang\}@gatech.edu.
Research supported in part by NSF grant CCF-0830298 and CCF-0910584.}
\and
Linji Yang$^\P$}
\date{July 11, 2010}
\begin{document}

\maketitle

\begin{abstract}
We study the effect of boundary conditions on the relaxation time (i.e., inverse
spectral gap) of the Glauber dynamics for the hard-core model on the tree.
The hard-core model is defined on the set of independent sets weighted
by a parameter $\lambda$, called the activity or fugacity.  The Glauber
dynamics is the Markov chain that updates a randomly chosen vertex in each
step.  On the infinite tree with branching factor $b$, the hard-core model
can be equivalently defined as a broadcasting process with a parameter
$\omega$ which is the
positive
solution to $\lambda=\omega(1+\omega)^b$, and
vertices are occupied with probability $\omega/(1+\omega)$ when their
parent is unoccupied.
This broadcasting process
undergoes a phase transition between the so-called reconstruction and non-reconstruction regions at $\omega_r\approx \ln{b}/b$.  Reconstruction has
been of considerable interest recently since it appears to be intimately
connected to the efficiency of local algorithms on locally tree-like graphs,
such as sparse random graphs.

In this paper we show that the relaxation time of the Glauber dynamics
on regular trees $T_h$ of height $h$ with branching factor $b$ and $n$ vertices, undergoes a phase transition around the reconstruction threshold.
In particular, we construct a boundary condition for which the relaxation time slows down at
the reconstruction threshold.
More precisely, for any $\omega \le \ln{b}/b$,
for $T_h$ with any boundary condition,
the relaxation time is $\Omega(n)$ and $O(n^{1+o_b(1)})$.
In contrast, above the reconstruction threshold we show that for every $\delta>0$, for $\omega=(1+\delta)\ln{b}/b$,
the relaxation time on $T_h$ with any boundary condition is
$O(n^{1+\delta + o_b(1)})$, and we construct a boundary condition where
the relaxation time is
$\Omega(n^{1+\delta/2 - o_b(1)})$.  To prove this lower bound in
the reconstruction region we introduce a general technique that
transforms a reconstruction algorithm into a set with poor
conductance.
\end{abstract}

\thispagestyle{empty}

\newpage

\pagenumbering{arabic}

\section{Introduction}
\label{sec:introduction}

There has been much recent interest in possible connections between
equilibrium properties of statistical physics models and
efficiency of local Markov chains for studying these models (see, e.g.,
\cite{BKMP,DSVW,Martinelli-lecturenotes,MSW-ising,MSW-soda,TVVY}).  In this paper we study the
hard-core model and establish
new connections between the so-called reconstruction threshold in statistical
physics with the convergence time of the single-site Markov chain known
as the Glauber dynamics.
The hard-core model was studied in statistical physics as model of a
lattice gas~(see, e.g.,  Sokal \cite{Sokal}), and in operations research as
a model of communication network (see Kelly \cite{Kelly}).  It is a natural combinatorial
problem, corresponding to counting and randomly sampling weighted
independent sets of an input graph $G=(V,E)$.
Let $\Omega=\Omega(G)$ denote the set of independent sets of $G$.
Each set is weighted by an activity (or fugacity) $\lambda>0$.
For $\sigma\in\Omega$, its weight is $\wt(\sigma) = \lambda^{|\sigma|}$ where $|\sigma|$ is the
number of vertices in the set $\sigma$.   The Gibbs measure is defined over $\Omega$ as
$\mu(\sigma) = \wt(\sigma)/Z$ where $Z=\sum_{\sigma\in\Omega} \wt(\sigma)$ is the partition function. \linji{changed weight $w$ to $\wt$ so that it doesn't look like $\omega$.}

This paper studies the hard-core model on trees, in some cases with a boundary
condition.  Let $T_h$ denote the complete tree of height $h$ with branching factor $b$.
For concreteness we are assuming the root has $b$ children, but our results,
of course, easily extend to allow $b+1$ children for the root, the so-called Bethe\ lattice.
Let $n$ denote the number of vertices in $T_h$, and let
$L$ denote the leaves of the tree.
A boundary condition is an assignment $\BD$ to the leaves, where in the
case of the hard-core model, $\BD$ specifies a subset of the leaves $L$ that are in the independent set.\linji{modified this sentence}
Then let $\Omega_\BD = \{\sigma\in\Omega: \sigma(L)=\BD\}$ be the set
of independent sets of $T_h$ that are consistent with~$\BD$, and
the Gibbs measure $\muhbd{h}{\BD}$ is defined with respect to $\Omega_\BD$,
i.e., it is the projection of $\mu$ onto $\Omega_\BD$.

The (heat bath) Glauber dynamics
is a discrete time Markov chain $(X_t)$ for sampling from the Gibbs distribution $\mu$
for a given graph $G=(V,E)$ and activity $\lambda$.
We view $\Omega\subset\{0,1\}^V$ where for $X_t\in\Omega$, $X_t(v)=1$ iff
$v$ is in the independent set.
The transitions $X_t\rightarrow X_{t+1}$ of the Glauber dynamics are defined as:
\begin{itemize}
\item  Choose a vertex $v$ uniformly at random;
\item  For all $w\neq v$ set $X_{t+1}(w) = X_t(w)$;
\item If all of the neighbors of $v$ are unoccupied,
set $X_{t+1}(v) = 1$ with probability $\lambda/(1+\lambda)$,  otherwise set
$X_{t+1}(v) = 0$.
\end{itemize}
When a boundary condition $\BD$ is specified, the state space is restricted
to $\Omega_\BD$.  For the case of the complete tree $T_h$ (possibly with
a boundary condition $\BD$) it is straightforward to verify that the Glauber
dynamics is ergodic\ with unique stationary distribution $\mu_h$ (or $\muhbd{h}{\BD}$
when a boundary condition is specified).
Thus, the Glauber dynamics is a natural algorithmic process for sampling
from the Gibbs distribution.
We study the relaxation time of the dynamics,
which is defined as the inverse of the spectral gap of the transition matrix.
See Section \ref{sec:background} for a more detailed definition of the relaxation time.

The Gibbs distribution describes the equilibrium state of the system,
and the Glauber dynamics is a model of how the physical system
reaches equilibrium \cite{Martinelli-lecturenotes}.  Thus, it is interesting to understand
connections between properties of the equilibrium state (i.e., the Gibbs
distribution) and properties of how the system reaches equilibrium (i.e., the
Glauber dynamics).  Models from statistical physics are designed
to study phase transitions in the equilibrium state.  A
phase transition is said to occur when a small change in the microscopic
parameters of the system (in the case of the hard-core model that corresponds
to $\lambda$) causes a dramatic change in the macroscopic properties
of the system.

A well-studied phase transition is uniqueness/non-uniqueness
of infinite volume Gibbs distributions, these are obtained as a limit of
Gibbs measures for a sequence of boundary conditions as $h\rightarrow\infty$.
For the hard-core model on the complete tree, Kelly \cite{Kelly} showed
that the uniqueness threshold is at
$\lambda_u =b^b/(b-1)^{b+1}$ (namely, uniqueness holds iff $\lambda<\lambda_u$). There are interesting
connections between the uniqueness threshold $\lambda_u$ and the efficiency
of algorithms on general graphs.  In particular, Weitz \cite{Weitz} showed
a deterministic fully-polynomial approximation scheme to estimate the partition
function for any graph with constant maximum degree $b$ for activities $\lambda<\lambda_u$.
Recently, Sly \cite{Sly:hardcore-negative} showed that it is NP-hard
(unless $NP=RP$) to approximate
the partition function for activities $\lambda$ satisfying $\lambda_u<\lambda<\lambda_u+\eps_b$ for some small constant $\eps_b$.

We are interested in the phase transition for reconstruction/non-reconstruction.
This corresponds to extremality of the infinite-volume measure obtained
by the ``free'' boundary condition  where free means no boundary condition \cite{Georgii}.
This measure can be generated by the following broadcast process which
constructs an independent set $\sigma$.
Let $\omega$ be the real positive solution of $\lambda=\omega(1+\omega)^b$.
Consider the infinite complete tree with branching factor $b$, and construct
$\sigma$ as follows.
We first include the root $r$ in $\sigma$ with probability $\omega/(1+\omega)$ and
leave it out with probability $1/(1+\omega)$.  Then for each vertex $v$, once the
state of its parent $p(v)$ is determined, if $p(v)\notin\sigma$ then we add
$v$ into $\sigma$ with probability $\omega/(1+\omega)$ and
leave it out with probability $1/(1+\omega)$; if $p(v)\in\sigma$ then we leave
$v$ out of $\sigma$.  Let $\sigma_h$ denote the configuration of $\sigma$ on level $h$,
and let $\nu_h$ denote the broadcast measure on $T_h$.

Reconstruction addresses whether $\sigma_h$ ``influences'' the configuration at the root $r$.
In words, we first generate $\sigma$ using the broadcasting measure, then we fix $\sigma_h$ and resample a configuration $\tau$ on $T_h$ from the Gibbs distribution $\muhbd{h}{\BD}$ with
boundary condition $\BD=\sigma_h$.  Of course, for finite $h$, the configuration at the root $r$ in $\tau$ has a bias to the initial configuration $\sigma(r)$.
Non-reconstruction is said to hold if the root is unbiased in expectation in the limit $h\rightarrow\infty$.  More precisely, reconstruction holds if and only if:
\begin{equation}
\label{eq:reconstruction-defn}
\lim\limits_{h\rightarrow\infty} \ExpSub{\sigma\sim\nu_h}{\left| \muhbd{h}{\sigma_h}(r\in\tau) - \frac{\omega}{1+\omega}\right|} > 0.
\end{equation}
There are many other equivalent conditions to the above definition of reconstruction, see Mossel \cite{MosselSurvey} for a more extensive survey.
We refer to the reconstruction threshold as the critical $\omega_r$ such that
for all $\omega<\omega_r$ non-reconstruction holds and for all $\omega>\omega_r$
reconstruction holds.  The existence of the reconstruction threshold follows
from Mossel~\cite[Proposition 20]{Mossel-2nd-eigenvalue}, and, by recent work
of Bhatnagar et al~\cite{BST} and Brightwell and Winkler~\cite{BW}, it is known that
$\omega_r=(\ln{b} + (1+o(1))\ln{\ln{b}})/b$.

Reconstruction for the Ising and Potts models
has applications in phylogenetics \cite{DMR}
and for random constraint satisfaction problems
is connected to the geometry of the space of solutions on sparse random graphs
\cite{ACO,GMON,KMRSZ,MRT}.
Our interest in this paper is on establishing
more detailed connections between the reconstruction threshold and
the relaxation time of the Glauber dynamics for trees.
Berger et al \cite{BKMP} proved that for the tree $T_h$ with
boundary condition $\BD$  such that $\muhbd{h}\BD=\nu_h$,
$O(n)$ relaxation time for all $h$ implies non-reconstruction.  For
the Ising model and colorings the boundary condition is empty, i.e.,
$\nu_h$ corresponds to the free boundary condition.   That is
not the case for the hard-core model as discussed further in Section \ref{sec:lower-bound-approach}.

It was recently established for the Ising model \cite{BKMP,MSW-ising,DLP}
and for k-colorings \cite{TVVY} that on the tree $T_h$ with
free boundary condition, the relaxation is $O(n)$ in the non-reconstruction region
and there is a slow down in the reconstruction region.
Our starting point was addressing whether a similar phenomenon occurs in
the hard-core model.  Martinelli et al \cite{MSW-soda} showed that
for the hard-core model on $T_h$ with free boundary condition the
relaxation time is $O(n)$ for all $\lambda$ (and the mixing time is $O(n\log{n}))$.  However, it is unclear
whether the reconstruction threshold has any connection to the relaxation
time of the Glauber dynamics on trees for the hard-core model.  (In fact,
we vacillated between proving that there is fast mixing for all boundary
conditions and proving the following result.)

We prove there is a connection by constructing a boundary condition for
which the relaxation time slows down at the reconstruction threshold.
Here is the formal statement of our results.

\begin{theorem}
\label{thm:main}
For the Glauber dynamics on the hard-core model with activity $\lambda= \omega (1+\omega)^b$ on the complete tree $T_h$ with $n$ vertices,
height $h$ and branching factor $b$, the following hold:
\begin{enumerate}
\item {\bf For all $\omega\le\ln{b}/b$:}\linji{deleted $b_0$ and for all  $\delta > 0$}
\label{thm:below}
\begin{equation*}
\Omega(n)  \le \ \Trel \ \le  O(n^{1+o_b(1)}).
\end{equation*}
\item {\bf For all $\delta>0$ and $\omega=(1+\delta)\ln{b}/b$:}
\begin{enumerate}
\item
\label{thm:above-upper}
For every boundary condition,
\begin{equation*}
\Trel \ \le O(n^{1+\delta+o_b(1)}).
\end{equation*}
\item
\label{thm:above-lower}
There exists a sequence of boundary conditions for all $h\rightarrow\infty$ such that,
\[
\Trel = \Omega(n^{1+\delta/2-o_b(1)}).
\]
\end{enumerate}
\end{enumerate}
\end{theorem}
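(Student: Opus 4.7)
The theorem breaks into three essentially independent components: a soft $\Omega(n)$ lower bound via a single-vertex test function, an upper bound via a Martinelli--Sinclair--Weitz style block-dynamics recursion (covering both parts~\ref{thm:below} and \ref{thm:above-upper}), and a novel reconstruction-to-conductance lower bound for part~\ref{thm:above-lower}. The $\Omega(n)$ lower bound is the easy direction: take $f:\Omega_\BD\to\{0,1\}$ to be the indicator that the root is occupied. Under any boundary condition the root marginal is bounded in $(0,1)$, so $\mathrm{Var}_{\muhbd{h}{\BD}}(f)=\Omega(1)$, while because only a single vertex is updated per step and $f$ depends on just the root, the Dirichlet form $\mathcal{E}(f,f)$ is $O(1/n)$. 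Poincar\'e gives $\Trel\ge \Omega(n)$.

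For the upper bounds I plan a block-dynamics recursion on height. Letting $\gamma_h$ denote the worst-case spectral gap on $T_h$ over boundary conditions, the decomposition into the $b$ subtrees rooted at the children of the root yields a recursion of the form $\gamma_h^{-1}\le C\cdot\gamma_{h-1}^{-1}$, where $C$ is controlled by a ``boundary--to--root influence'' quantity for the single-step block move at the root. This quantity is, up to lower-order terms, a $1$-step contraction of the hard-core recursion, and a direct calculation using $\lambda=\omega(1+\omega)^b$ shows $C\le (1+\delta)(1+o_b(1))$ when $\omega=(1+\delta)\ln b/b$. Iterating over $h=\Theta(\log_b n)$ levels and multiplying by the MSW overhead converts $C^h$ into $n^{1+\delta+o_b(1)}$, yielding part~\ref{thm:above-upper}; specialising to $\delta=0$ gives part~\ref{thm:below}. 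The main technical care is to bound $C$ uniformly over \emph{all} boundary conditions, which in the hard-core model is subtle because an adversarial boundary can drive the Gibbs measure far from uniform on the leaves of small subtrees.

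The main contribution, and the expected main obstacle, is the lower bound in part~\ref{thm:above-lower}. The plan is to transform a reconstruction algorithm into a set with poor conductance. Choose a random boundary $\BD\sim\nu_h$ from the broadcast measure, and let $f:\Omega_\BD\to\{0,1\}$ be a census-type reconstruction estimator of the root state from the configuration at an intermediate level $\ell=\Theta(h)$ below the root (e.g.\ a recursive majority of normalised occupation counts in each subtree, tuned so that the BP recursion matches the broadcast dynamics). In the reconstruction regime $\omega>\ln b/b$, standard estimates on the broadcast process give $\mathrm{Var}_{\muhbd{h}{\BD}}(f)=\Omega(1)$ with high probability over $\BD$. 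The key estimate is that a single Glauber update flips $f$ only via vertices that are ``pivotal'' for the census statistic; using a second-moment analysis of the broadcast process, one shows the aggregate $\mu_{h,\BD}$-measure of pivotal configurations decays like $n^{-\delta/2+o_b(1)}$. Combined with the $1/n$ vertex-selection factor, this bounds the Dirichlet form by $n^{-1-\delta/2+o_b(1)}$, and Poincar\'e delivers $\Trel=\Omega(n^{1+\delta/2-o_b(1)})$.

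The hardest step will be the pivotal-weight estimate with the precise exponent $\delta/2$. Reconstructability merely asserts that the census statistic correlates with the root; converting this into a conductance bound requires the statistic to be \emph{robust} under single-site resampling, with quantitative decay matching $\omega=(1+\delta)\ln b/b$. I expect this to reduce to a careful variance/large-deviation estimate on the census at level $\ell$, conditioned on the random broadcast boundary and averaged via the tree automorphism, together with a union bound over the levels at which an update can be pivotal. The loss from $\delta$ to $\delta/2$ is the price paid for a second-moment (rather than exponential) concentration of the census estimator.
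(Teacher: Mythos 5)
Your first component (the $\Omega(n)$ bound via the root indicator) is fine and equivalent to the paper's conductance argument, and your block recursion on height is the right skeleton for the upper bounds, but the main lower bound as planned has genuine gaps. First, the estimator: a census-type statistic does not reconstruct at $\omega=(1+\delta)\ln b/b$ — count-based (Kesten--Stigum/second-eigenvalue) reconstruction needs $b\theta^2>1$, which at this $\omega$ fails badly — so "standard estimates on the broadcast process" do not give your variance claim for any census-like $f$. The paper must (and does) use the Brightwell--Winkler recursive rule (a vertex is labeled occupied iff all its children are labeled unoccupied), whose effectiveness at this threshold is exactly what \cite{BW} establishes, and the exponent $\delta/2$ comes from the specific combinatorics of that rule: a pivotal input vertex forces the labels to alternate along its path to the root and forces all $b-1$ off-path subtrees at each unoccupied path vertex to output $0$; the resulting expectation is evaluated by a saddle-point computation (Lemma~\ref{lem:asympunn}), giving $\bar S\approx n^{-(1+\delta/2-o_b(1))}$. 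A generic second-moment/pivotality bound of the kind you describe corresponds to the paper's "trivial" estimate (at least $h/2$ unoccupied path vertices), which only yields $\bar S\lesssim n^{-(1+\delta)/2}$ and hence $\Trel\gtrsim n^{(1+\delta)/2}$, far short of $n^{1+\delta/2}$. Second, the boundary: sampling $\BD\sim\nu_h$ and working with the quenched measure $\muhbd{h}{\BD}$ is not innocuous. Near the leaves the conditional measure is vertex-by-vertex very far from the broadcast law (a height-one vertex all of whose children are unoccupied in $\BD$ is occupied with conditional probability $\lambda/(1+\lambda)$, not $\omega/(1+\omega)$), so neither the quenched variance lower bound nor the quenched pivotal-weight computation follows from the annealed broadcast analysis; moreover you need a \emph{single} boundary on which the variance lower bound and the pivotal upper bound hold simultaneously, a quantifier exchange your sketch does not address. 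The paper sidesteps all of this by explicitly constructing deterministic boundaries (the trimmed trees $L_i,U_i$ with $Q$-values sandwiching $1$, Lemma~\ref{lem:err2}), for which the conditional marginals converge to the broadcast values exponentially fast in the height, and then rerunning the BW sensitivity analysis with inputs a constant number of levels $\hp(\lambda,b)$ above the leaves (Lemma~\ref{lem:uni2}, Proposition~\ref{prop:eff2}).

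On the upper bound, your recursion $\gamma_h^{-1}\le C\,\gamma_{h-1}^{-1}$ is the paper's block-dynamics reduction (Lemma~\ref{lem:block}), but the asserted per-level constant $C\le(1+\delta)(1+o_b(1))$ cannot be right: iterated over $h=\Theta(\log_b n)$ levels it would give $\Trel\le n^{1+o_b(1)}$ for \emph{every} $\delta>0$, contradicting Part~\ref{thm:above-lower}. The correct per-level cost is the worst-case relaxation time of the single-step block (a star whose leaves have arbitrary conditional occupation probabilities $\rho$), which the paper bounds by $O((\lambda+1)b\ln b)=b^{1+\delta+o_b(1)}$ via a maximal one-step coupling with a case analysis on $\rho$ (Lemmas~\ref{lemma:up1}--\ref{lemma:up3}); that case analysis is precisely where uniformity over all boundary conditions — the subtlety you flag — is actually handled, and it is the missing substance of your "direct calculation."
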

\begin{remark*}
More precisely, we show that there is a function $g(b)=O(\ln\ln{b}/\ln{b})=o(1)$ such that for every $b$,
the lower bound in Part \ref{thm:above-lower} is $\Omega(n^{1+\delta/2-g(b)})$,
and there is a function $f(b) = O((\ln\ln{b})^2/\ln{b})=o(1)$ such that for every $b$,
the upper bound in Part \ref{thm:below} is $O(n^{1+f(b)})$ and in Part \ref{thm:above-upper} is $O(n^{1+\delta+f(b)})$. \linji{changed the remark}
\end{remark*}

The upper bound improves upon Martinelli et al \cite{MSW-soda} who
showed $O(n)$ relaxation time (and $O(n\log{n})$ mixing time)
for $\lambda < 1/(\sqrt{b}-1)$ for all boundary conditions.
Note, $\lambda = 1/\sqrt{b}$ is roughly equivalent to
$\omega \approx \frac{1}{2}\ln{b}/b$ which is
below the reconstruction threshold.
Our main result extends the fast mixing up to the reconstruction threshold, and
shows the slow-down beyond the reconstruction threshold.
Our lower bound in the reconstruction region
uses a general approach that transforms an algorithm showing reconstruction
into a set with poor conductance, which implies the lower bound on the
relaxation time.  This framework captures the proof approach used in
\cite{TVVY}.

In Section \ref{sec:background} we formally define various terms and
present the basic tools used in our proofs.  The lower bound (Part \ref{thm:above-lower} of Theorem
\ref{thm:main}) is presented in Sections \ref{sec:lower-bound-approach},
\ref{sec:lower-bound-broadcasting} and \ref{sec:lower-bound-hardcore}.
Section \ref{sec:lower-bound-approach} outlines the approach.  We then
prove an analogue of Theorem \ref{thm:main} in Section \ref{sec:lower-bound-broadcasting}
for the broadcasting model and use it in Section \ref{sec:lower-bound-hardcore} to prove Part \ref{thm:above-lower} of Theorem \ref{thm:main}.
The argument for the upper bounds
stated in Theorem \ref{thm:main} is presented in Section \ref{sec:upper-bound}. \linji{changed the wording according to Daniel's suggestion}

\section{Background}
\label{sec:background}
Let $P(\cdot,\cdot)$ denote the transition matrix of the Glauber dynamics.
Let $\gamma_1 \ge \gamma_2 \ge \dots\ge \gamma_{|\Omega|}$ be the eigenvalues of the transition matrix $P$.
The spectral gap $\gap$ is defined as $1-\gamma$ where $\gamma=\max\{\gamma_2,|\gamma_{|\Omega|}|\}$ denotes the second largest eigenvalue in absolute value.
The relaxation time $\Trel$ of the Markov chain is then defined as $\gap^{-1}$, the inverse of the spectral gap.
Relaxation time is an important measure of the convergence rate of a Markov chain~(see, e.g., Chapter 12 in \cite{LPW}).

To lower bound the relaxation time we analyze
conductance.  The conductance of a Markov chain with state space $\Omega$ and
transition matrix $P$ is given by $
\Phi=\min_{S\subseteq\Omega} \{\Phi_S\}
$,
where $\Phi_S$ is the conductance of a specific set $S \subseteq \Omega$ defined as
\[
\Phi_S=\frac{\sum_{\sigma\in S}\sum_{\eta\in\bar{S}}\pi(\sigma)P(\sigma,\eta)}
{\pi(S)\pi(\bar{S})}.\]

Thus, a general way to find a good upper bound on the conductance is to find a set~$S$ such that the probability of ``escaping'' from~$S$ is relatively small.
The well-known relationship between the relaxation time and the conductance was established in~\cite{LawlerSokal} and~\cite{SinclairJerrum},
and we will use the form $\Trel = \Omega(1/\Phi)$ for proving the lower bounds.

\section{Lower Bound Approach}
\label{sec:lower-bound-approach}

First note that the lower bound stated
in Part \ref{thm:below} of Theorem \ref{thm:main},
namely, $\Trel=\Omega(n)$, is trivial for all $\omega$.
For example, by considering the set $S=\{\sigma\in\Omega:r\notin\sigma\}$ of
independent sets which do not contain the root, $\Phi(S) = \Omega(1/n)$ since
we need to update $r$ to leave $S$.

We begin by explaining the high level idea of the non-trivial lower bound in
Part \ref{thm:above-lower} of Theorem \ref{thm:main}.  To that end,
we first analyze a variant of the hard-core model in which there are two different
activities, the internal vertices have activity $\lambda$ and
the leaves have activity~$\omega$.  The resulting Gibbs distribution is
identical to the measure~$\nu_h$ defined
in Section \ref{sec:introduction} for the broadcasting process.
Thus we refer to the following model as the broadcasting model.

For the tree $T_h=(V,E)$, we look at the following
equivalent definition of the distribution $\nu_h$ over the set $\Omega$
of independent sets of $T_h$.  For $\sigma\in\Omega$, let
\[  \wt'(\sigma) = \lambda^{|\sigma\cap V\setminus L|}\omega^{|\sigma\cap L|},\]
where $L$ are the leaves of $T_h$ and $\omega$ is, as before,
the positive solution to $\omega(1+\omega)^b=\lambda$.  Let
$\nu_h(\sigma)=\wt'(\sigma)/Z'$ where $Z'=\sum_{\sigma\in\Omega} \wt'(\sigma)$
is the partition function. By simple calculations,
the following proposition holds.

\begin{proposition}
\label{pro:broadcasting-alpha}
The measure $\nu_h$ defined by the hard-core
model with activity $\lambda$ for internal vertices and $\omega$ for leaves
is identical to the measure defined by the broadcasting process.
\end{proposition}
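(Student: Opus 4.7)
The plan is to compute the probability that the broadcasting process produces a given (necessarily independent) set $\sigma$, express it as a product of local factors, and then check that after using the identity $\lambda = \omega(1+\omega)^b$ the two expressions differ only by a constant depending on $n$ but not on $\sigma$.

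First I will unfold the broadcasting rule vertex by vertex. For the root $r$, the factor contributed is $\omega/(1+\omega)$ if $\sigma(r)=1$ and $1/(1+\omega)$ if $\sigma(r)=0$. For any non-root vertex $v$, the factor is $\omega/(1+\omega)$ when $\sigma(v)=1$ (which forces the parent to be unoccupied in $\sigma$, consistent with the rule), $1/(1+\omega)$ when $\sigma(v)=0$ and $\sigma(p(v))=0$, and $1$ when $\sigma(v)=0$ and $\sigma(p(v))=1$. Collecting these, the broadcasting probability of a configuration $\sigma\in\Omega$ becomes
\[
\nu_h(\sigma) \;=\; \omega^{|\sigma|}\,(1+\omega)^{-|\sigma|-U(\sigma)},
\]
where $U(\sigma)$ is the number of vertices $v$ with $\sigma(v)=0$ and either $v=r$ or $\sigma(p(v))=0$.

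Next I would rewrite the candidate weight $\wt'(\sigma)=\lambda^{|\sigma\cap(V\setminus L)|}\omega^{|\sigma\cap L|}$ using $\lambda=\omega(1+\omega)^b$, which immediately gives
\[
\wt'(\sigma) \;=\; \omega^{|\sigma|}\,(1+\omega)^{b\,|\sigma\cap(V\setminus L)|}.
\]
So it suffices to show that the exponent sum $N(\sigma) := b\,|\sigma\cap(V\setminus L)| + |\sigma| + U(\sigma)$ is independent of $\sigma$; then the broadcasting probability is a constant multiple of $\wt'(\sigma)$ and the two normalized measures coincide.

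To establish $N(\sigma)=n$, I would rewrite $b\,|\sigma\cap(V\setminus L)|=\sum_{c\neq r}\sigma(p(c))$ (each internal vertex has exactly $b$ children, so each occupied internal vertex is counted once by each of its children), and use independence to replace $(1-\sigma(c))(1-\sigma(p(c)))$ by $1-\sigma(c)-\sigma(p(c))$ inside the sum defining $U(\sigma)$. Adding the three pieces, the $\sigma(p(c))$ and $\sigma(c)$ terms cancel, leaving $N(\sigma)=\sigma(r)+(1-\sigma(r))+\sum_{c\neq r}1=n$.

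The only step requiring any care is the combinatorial identity $N(\sigma)=n$; once that is in hand, the proposition follows immediately, since then
\[
\nu_h(\sigma)\;=\;(1+\omega)^{-n}\,\wt'(\sigma),
\]
so $\nu_h(\sigma)=\wt'(\sigma)/Z'$ with $Z'=(1+\omega)^n$, matching the hard-core model with activities $\lambda$ on internal vertices and $\omega$ on leaves.
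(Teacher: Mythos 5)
Your proof is correct, but it takes a different route from the paper. The paper verifies by induction (bottom-up from the leaves) that in the weighted hard-core model the conditional probability that a vertex is occupied given its parent is unoccupied solves $p_v=(1-p_v)\lambda/(1+\omega)^b$ and hence equals $\omega/(1+\omega)$, i.e.\ it matches the broadcast rule; since both measures are Markov on the tree, this identifies them. You instead compute the broadcast probability of each configuration explicitly as a product of local factors, $\nu_h(\sigma)=\omega^{|\sigma|}(1+\omega)^{-|\sigma|-U(\sigma)}$, and show via the combinatorial identity $b\,|\sigma\cap(V\setminus L)|+|\sigma|+U(\sigma)=n$ (which does use the hard-core constraint $\sigma(c)\sigma(p(c))=0$, exactly as you note) that this equals $(1+\omega)^{-n}\wt'(\sigma)$, so the two normalized measures agree. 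Both arguments are sound; yours is more explicit and global, and it yields the partition function $Z'=(1+\omega)^n$ as a free byproduct, while the paper's inductive argument is shorter and leans on the recursive conditional structure that the rest of the paper reuses. One tiny point worth making explicit in your write-up is that the exponent identity relies on every internal vertex of $T_h$ having exactly $b$ children, which is what justifies rewriting $b\,|\sigma\cap(V\setminus L)|$ as $\sum_{c\neq r}\sigma(p(c))$.
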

\begin{proof}
In fact, we just need to verify that in the hard-core model with
activity $\lambda$ for internal vertices and $\omega$ for leaves, the
probability $p_v$ of a vertex $v$ being occupied conditioning on its
parent is unoccupied is $\omega/(1+\omega)$.
This can be proved by induction. The base case is $v$ being a leaf,
which is obviously true by the Markovian property of the Gibbs
measure.
If $v$ is not a leaf, by induction, the probability $p_v$ has to
satisfy the following equation
\[
p_v = (1-p_v)\frac{\lambda}{(1+\omega)^b},
\]
which solves to $p_v = \omega/(1+\omega)$.
\end{proof}
The result of Berger et al \cite{BKMP} mentioned in Section \ref{sec:introduction}
implies that the relaxation time of the Glauber dynamics on the
broadcasting model is $\omega(n)$.  We will prove a stronger result,
analogous to the desired lower bound for Part \ref{thm:above-lower}
of Theorem \ref{thm:main}.
\begin{theorem}
\label{thm:broadcasting}
For all $\delta > 0$,
the Glauber dynamics for the broadcasting model on the
complete tree $T_h$ with $n$ vertices, branching factor $b$
and $w=(1+\delta)\ln{b}/b$
satisfies the following:\ricardo{Erased $b>b_0$. Now, it is inside the $o$ term}
\[
\Trel = \Omega(n^{1+\delta/2-o_b(1)}),
\]
where the $o_b(1)$ function is $O(\ln{\ln {b}}/\ln{b})$.
\end{theorem}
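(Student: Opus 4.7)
The plan is to lower-bound $\Trel$ via the conductance bound $\Trel = \Omega(1/\Phi)$ recalled in Section~\ref{sec:background}: we will construct a set $S \subseteq \Omega$ with $\nu_h(S) \nu_h(\bar S) = \Omega(1)$ and boundary flow
\[
\sum_{\sigma \in S, \eta \in \bar S} \nu_h(\sigma) P(\sigma,\eta) \;=\; O\bigl(n^{-1 - \delta/2 + o_b(1)}\bigr),
\]
so that $\Phi(S) = O(n^{-1-\delta/2 + o_b(1)})$. Following the reconstruction-to-conductance paradigm announced in the introduction, $S$ will be defined from the output of a reconstruction algorithm for the broadcast measure $\nu_h$.

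The first step is to fix a deterministic leaf-based reconstruction algorithm $A: \{0,1\}^L \to \{0,1\}$ whose output is correlated with the root state under $\nu_h$. Since $\omega = (1+\delta)\ln b / b$ strictly exceeds $\omega_r$, such an algorithm with nontrivial advantage is known to exist. Setting $S := \{\sigma \in \Omega : A(\sigma_L) = 0\}$, the advantage together with the balanced root marginal yields $\nu_h(S)\nu_h(\bar S) = \Omega(1)$. For the quantitative boundary-flow bound, however, we further need $A$ to be \emph{locally insensitive}, meaning that flipping any single leaf rarely changes $A(\sigma_L)$; designing $A$ with both reconstruction strength and local insensitivity is the main creative step.

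Because $A$ depends only on leaves, the boundary flow simplifies to
\[
\frac{1}{n}\sum_{v \in L} \ProbSub{\sigma \sim \nu_h}{\text{heat-bath update at } v \text{ flips } A(\sigma_L)},
\]
so it suffices to show that the average per-leaf pivotality is $O(n^{-1-\delta/2 + o_b(1)})$. The broadcast's covariance structure --- two leaves with lowest common ancestor at level $k$ have covariance of order $p^{2(h-k)}$ with $p = \omega/(1+\omega)$ --- drives both the reconstruction advantage and the pivotality bound, and a careful optimization at $\omega = (1+\delta)\ln b/b$ produces the exponent $1 + \delta/2$. The $o_b(1) = O(\ln\ln b/\ln b)$ error comes from the subleading $\log_b \ln b$ term appearing in exponents of the form $n^{\log_b((1+\delta)\ln b)}$.

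The main obstacle is precisely that $\omega = (1+\delta)\ln b/b$ lies well below the Kesten--Stigum threshold ($b\omega^2 \to 0$), so a naive threshold on the simple census of occupied leaves has per-leaf pivotality only $n^{-o_b(1)}$, insufficient by a factor of $n^{1+\delta/2 - o_b(1)}$. Overcoming this requires a more sophisticated construction of $A$ --- likely a weighted, path-indexed, or multi-level reconstruction statistic adapted to the hard-core broadcast kernel at $\omega \approx \ln b/b$ --- together with a detailed level-by-level variance and anti-concentration analysis to verify the precise $\delta/2$ exponent and the $O(\ln\ln b/\ln b)$ error.
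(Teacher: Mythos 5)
Your reduction to conductance via a leaf-measurable set $S=\{\sigma: A(\sigma_L)=0\}$, and your identification of the target quantity (average per-leaf pivotality of order $n^{-1-\delta/2+o_b(1)}$), is exactly the framework the paper formalizes in Theorem~\ref{thm:sensitivity}. But your argument stops precisely where the real work begins: you never exhibit the algorithm $A$, and you explicitly defer its construction (``the main creative step'', ``likely a weighted, path-indexed, or multi-level reconstruction statistic''). The mere existence of an effective reconstruction algorithm above $\omega_r$ only yields $\nu_h(S)\nu_h(\bar S)=\Omega(1)$; it gives no control on single-leaf pivotality. Moreover the covariance heuristic you invoke (pairwise leaf correlations of order $p^{2(h-k)}$) is a census/second-moment argument of Kesten--Stigum type, which, as you yourself observe, cannot produce the required bound at $\omega=(1+\delta)\ln b/b$ since $b\omega^2\to 0$. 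So the quantitative content of the theorem --- the exponent $1+\delta/2$ --- is asserted rather than proved; what you have is a correct reduction plus an open problem.

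The concrete missing ingredient is the Brightwell--Winkler recursion $R_\sigma(v)=1-\max\{R_\sigma(w_1),\dots,R_\sigma(w_b)\}$ (label a vertex occupied iff all its children are labeled unoccupied), whose effectiveness for $\omega\ge(1+\delta)\ln b/b$ and $b\geq b_0(\delta)$ is known from \cite{BW}. For this $A$, flipping one leaf changes the root output only if the labels alternate along the entire leaf-to-root path, which forces, for every \emph{unoccupied} path vertex $u_i$, all $b-1$ off-path child subtrees to evaluate to $0$; conditioning on the path configuration these events are independent, each with probability at most $1.01^{1/b}/(1+\omega)$ uniformly in the height (Lemmas~\ref{lem:nonuni1} and~\ref{lem:nonuni2}). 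Hence the pivotality probability is at most $\Exp{\left(1.01\,\omega(1+\omega)/\lambda\right)^{\#\{i:\kappa(u_i)=0\}}}$, and the saddle-point estimate of Lemma~\ref{lem:asympunn} gives $\bar{S}_{\A}=O\big((1.01\,\omega/\sqrt{\lambda})^h\big)=O\big(n^{-1-\delta/2+O(\ln\ln b/\ln b)}\big)$ using $\lambda=\omega(1+\omega)^b$. Note that even granting the BW algorithm, the easy bound that at least $h/2$ path vertices are unoccupied only yields exponent $(1+\delta)/2$, which is strictly weaker than $1+\delta/2$; the refined generating-function analysis of the number of unoccupied path vertices is genuinely needed, and none of this appears in your proposal.
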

\begin{remark*}
We can show a similar upper bound on the relaxation time for the Glauber dynamics in this setting as in Theorem \ref{thm:main}.
Moreover, we can show the same upper bound for the mixing time by establishing a tight bound between the inverse log-Sobolev constant
and the relaxation time as was done for colorings in Tetali et al \cite{TVVY}.
\end{remark*}

We will prove Theorem \ref{thm:broadcasting} via a general method
that relates any reconstruction algorithm (or function)\linji{added ``or function'' as Juan asks: should we use "reconstruction function" instead? we don't need it to be an algorithm.}
with the conductance of the
Glauber dynamics.
A \emph{reconstruction algorithm} is a function $A:\Omega(L)\rightarrow\{0,1\}$ (ideally efficiently computable) such
that $A(\sigma_h)$ and $\sigma(r)$ are positively
correlated.
Basically, the algorithm $A$ takes the configurations at the leaves $L$ as the input and
tries to compute the configuration at the root. When the context is clear, we write $A(\sigma)$ instead of $A(\sigma_h)$.
Under the Gibbs measure $\nu_h$, the {\em effectiveness} of $A$ is
the following measure of the covariance between the algorithm $A$'s output
and the marginal at the root of the actual measure:
\juan{Could $r_{h,A}$ be negative? If that is the case we may be in trouble in some of the proofs (I don't think that just putting an absolute value will work, but because we only look at the "effective case" we might be able to assume $r_{h,A} >0$ for all $h$(or at least $h>h_0$, i.e. looks like just a technicality) )}
\linji{To Juan: We discussed about it, and we think it is fine to define the effectiveness in this limit way, anyway it is talking about the asymptotic case, i.e., $h$ and $n$ are going to infinity. Actually, any good algorithm should have positive $r$ since a trivial algorithm would be fixing the output to $1$ always.}
\[
r_{h,A}=\min_{x\in\{0,1\}}\left[
\nu_h(A(\sigma)=\sigma(r)=x)  -\nu_h(A(\sigma)=x)  \nu_h(\sigma(r)=x)\right].
\]
If it is the case that $\liminf_{h\rightarrow\infty}r_{h,A} = c_0 >0$ for some positive constant $c_0$ depending only on $\omega$ and $b$, then
we say that it is an {\em effective reconstruction algorithm}. \linji{changed to the liminf and added a constant}
In words, an effective algorithm, is able to recover the spin at the root,
from the information at the leaves, with a nontrivial success, when $h\rightarrow \infty$.
Notice that reconstruction (defined in \eqref{eq:reconstruction-defn}) is a
necessary condition for any reconstruction algorithm to be effective, since
\juan{Aren't we missing a probability term in the middle term (for the first inequality). And for me the second inequality is not clear}
\linji{No, it is the same as the definition of the reconstruction; changed the equations a little bit.}
\[
\ExpSub{\sigma\sim\nu_h}{\left| \muhbd{h}{\sigma_h}(r\in\tau) - \frac{\omega}{1+\omega}\right|}
\ge \ExpSub{\sigma\sim\nu_h}{\big( \muhbd{h}{\sigma_h}\left(r\in\tau\right) - \nu_h\left(r\in \sigma\right)\big)\indicator{A\left(\sigma\right)=1}}
\ge r_{h,A},
\]
where $\indicator{}$ is the indicator function.
We define the \emph{sensitivity} of $A$, for the configuration $\sigma\in \Omega(T_{h})$, as the
fraction of vertices~$v$ such that switching the spin at~$v$ in $\sigma$ changes the final result of $A$.
More precisely, let $\sigma^{v}$ be the
configuration obtained from changing~$\sigma$ at~$v$. Define the sensitivity as:
\[
S_A(\sigma)  = \frac{1}{n}\#\{v\in L: A(\sigma^v)\neq A(\sigma)\}.
\]
The {\em average sensitivity} (with respect to the root being occupied)
$\bar{S}_A$ is hence defined as
\[
\bar{S}_A = \ExpSub{\sigma\sim\nu_h}{S_A(\sigma)\indicator{A(\sigma) = 1}}.
\]
It is fine to define the average sensitivity without the indicator function,
which only affects a constant factor in the analysis. We are doing so
to simplify some of the results' statements and proofs.

Typically when one proves reconstruction, it is done by presenting
an effective reconstruction algorithm.
Using the following theorem, by further analyzing the sensitivity
of the reconstruction algorithm, one obtains a lower bound on the
relaxation time or mixing time of the Glauber dynamics.
\begin{theorem}
\label{thm:sensitivity}
Suppose that $A$ is an effective reconstruction algorithm.
Then, the relaxation time $\Trel$ of the
Glauber dynamics satisfies $\Trel = \Omega\left((\bar{S}_A)^{-1}\right)$.
\end{theorem}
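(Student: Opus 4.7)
The plan is to prove the statement via the conductance lower bound $\Trel = \Omega(1/\Phi)$ recalled in Section \ref{sec:background}, by exhibiting an explicit set with small conductance built from the reconstruction algorithm. Take
\[
S \;=\; \{\sigma \in \Omega : A(\sigma) = 1\},
\]
so that $\bar{S} = \{A(\sigma)=0\}$. The goal is to upper bound $\Phi_S = F(S,\bar S)/(\nu_h(S)\nu_h(\bar S))$ by $O(\bar S_A)$, which then forces $\Trel = \Omega(\Phi_S^{-1}) = \Omega(\bar S_A^{-1})$.

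First I bound the flow $F(S,\bar S)$. Since $A$ depends only on $\sigma_h = \sigma|_L$, any Glauber step that crosses the cut $S\to\bar S$ must update a leaf $v$, and the new configuration $\sigma'$ must satisfy $A(\sigma')\neq A(\sigma)$. As $\sigma'$ differs from $\sigma$ only at $v$, this forces $A(\sigma^v)\neq A(\sigma)$, i.e.\ $v$ is one of the sensitive leaves counted in $nS_A(\sigma)$. The probability that a single Glauber step updates a fixed vertex is $1/n$, and given this, the probability of actually flipping the spin is at most one. Hence
\[
F(S,\bar S) \;\le\; \sum_{\sigma \in S} \nu_h(\sigma)\,\frac{1}{n}\,\#\{v\in L : A(\sigma^v)\neq A(\sigma)\}
\;=\; \sum_\sigma \nu_h(\sigma)\,S_A(\sigma)\,\indicator{A(\sigma)=1}
\;=\; \bar S_A.
\]

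Next I lower bound the denominator $\nu_h(S)\nu_h(\bar S) = \nu_h(A=1)\nu_h(A=0)$ by a positive constant using effectiveness. By definition, for $x\in\{0,1\}$,
\[
r_{h,A} \;\le\; \nu_h\bigl(A(\sigma)=x,\ \sigma(r)=x\bigr) - \nu_h\bigl(A(\sigma)=x\bigr)\nu_h\bigl(\sigma(r)=x\bigr) \;=\; \mathrm{Cov}\bigl(\indicator{A=x},\indicator{\sigma(r)=x}\bigr),
\]
so Cauchy--Schwarz gives $r_{h,A}^2 \le \mathrm{Var}(\indicator{A=1})\,\mathrm{Var}(\indicator{\sigma(r)=1}) \le \tfrac14\,\nu_h(A=1)\nu_h(A=0)$. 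Because $A$ is effective, $\liminf_h r_{h,A} \ge c_0 > 0$, so for all $h$ large enough,
\[
\nu_h(S)\,\nu_h(\bar S) \;\ge\; 4c_0^2 \;=\; \Omega(1).
\]
Combining the two bounds yields $\Phi_S \le \bar S_A / (4 c_0^2) = O(\bar S_A)$, and since $\Phi \le \Phi_S$, the conductance bound stated in Section \ref{sec:background} gives $\Trel = \Omega(1/\Phi) \ge \Omega(1/\Phi_S) = \Omega(\bar S_A^{-1})$, as claimed.

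The main thing to be careful about is the flow bound: one must confirm that no Glauber transition out of $S$ is missed by restricting to leaf updates (which follows from $A$ depending only on $\sigma|_L$) and that the crude replacement of the actual flip probability by $1$ costs only a constant factor hidden in the $\Omega(\cdot)$; neither is a real obstacle. The use of the indicator $\indicator{A(\sigma)=1}$ in the definition of $\bar S_A$ is precisely what makes the choice $S = \{A=1\}$ line up with the flow computation, so there is no mismatch between the two notions.
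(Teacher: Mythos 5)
Your proposal is correct and follows essentially the same route as the paper: bound the conductance of the cut set $\{\sigma : A(\sigma)=1\}$, observe that cut-crossing moves must flip a sensitive leaf so the flow is at most $\bar{S}_A$, and use effectiveness to make the denominator $\Omega(1)$. The only (cosmetic) difference is in the denominator bound, where you apply Cauchy--Schwarz to the covariance, while the paper simply notes $\nu(U)\ge\nu(A(\sigma)=\sigma(r)=1)\ge r_{h,A}$ and $1-\nu(U)\ge r_{h,A}$, giving $\Phi_U\le r_{h,A}^{-2}\bar{S}_A$ directly.
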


\begin{remark*}
The above theorem can be generalized to any spin system.
To illustrate the usefulness of this theorem, we note that the
lower bound on the mixing time of the Glauber dynamics for $k$-colorings
in the reconstruction region proved in \cite{TVVY} fits this
conceptually appealing framework.
\end{remark*}

\begin{proof}
Throughout the proof let $\nu := \nu_h$.
Consider the set $U=\left\{  \sigma:A(\sigma)=1\right\}$.
Then,
\begin{equation*}
\Phi_{U} =\frac{\textstyle\sum\nolimits_{\sigma \in U}\nu(\sigma)\sum\nolimits_{w\in L}\sum\nolimits_{\tau:\tau(w)\neq \sigma(w)}P(\sigma,\tau)}{\nu(U)(1-\nu(U))} \le
\frac{\sum_{\sigma\in U}\nu(\sigma) S_A(\sigma)}{\nu(U)(1-\nu(U))}.
\end{equation*}
From the definition of $r_{h,A}$, we have that $\nu(U) \geq \nu(A(\sigma) = \sigma(r) = 1) \geq r_{h,A}$, and similarly
$(1-\nu(U))\geq \nu(A(\sigma) = \sigma(r) = 0) \geq r_{h,A}$.
Now, because the algorithm is effective, we have $\liminf_{h\rightarrow\infty}(r_{h,A})= c_0 >0$ and hence for all $h$ big enough, $r_{h,A}>0$.
Therefore, $\Phi_{U} \leq (r_{h,A})^{-2} \bar{S}_A$, which
concludes that
$
\Trel= \gap^{-1} \ge 1/\Phi_{U} =\Omega((\bar{S}_A)^{-1})
$. \linji{changed the proof}
\end{proof}
To prove Theorem \ref{thm:broadcasting},
we analyze the sensitivity of the reconstruction algorithm
by Brightwell and Winkler \cite[Section 5]{BW} which yields the best known upper bounds on the reconstruction threshold. Our goal is to show that the
average sensitivity of this algorithm is small. The analysis of the sensitivity of the
Brightwell-Winkler (BW) algorithm, which then proves Theorem \ref{thm:broadcasting},
is presented in Section \ref{sec:lower-bound-broadcasting}.\linji{Eric added a few words in this paragraph.}

Our main objective remains of constructing a sequence of ``bad'' boundary conditions under which the Glauber dynamics
for the hard-core model
slows down in the reconstruction region. An initial approach is that if we can find a complete tree~$T'$ with some boundary
condition such that the marginal of the root being occupied exactly
equals $\omega/(1+\omega)$, then by attaching the same tree~$T'$ with the corresponding
boundary conditions to all of the leaves of a complete tree~$T$, we are able to simulate the nonuniform hard-core model on~$T$,
(i.e., the resulting measure projected onto $T$ is the same as the one in the broadcasting
model) and hence we can do the same approach to
upper-bound the conductance of the dynamics on this new tree.
However, from a cardinality argument, not for every $\omega$ there exists a complete tree of finite height
with some boundary condition such that the marginal probability of the root being occupied equals $\omega / (1+\omega)$.
Alternatively, we give a constructive way to find boundary conditions that approximate the desired marginal probability relatively accurately.
This is done in Section \ref{sec:lower-bound-hardcore}.

Finally, at the end of Section \ref{sec:lower-bound-hardcore}
we argue that since the error is shrinking very fast from the bottom level
under our construction of boundary conditions, we can again analyze the sensitivity of the
Brightwell-Winkler algorithm starting from just a few levels above the leaves.
This approach yields the lower bound stated in Part \ref{thm:above-lower}
of Theorem \ref{thm:main}.

\section{Lower Bound for Broadcasting: Proof of Theorem~\ref{thm:broadcasting}}
\label{sec:lower-bound-broadcasting}

Throughout this section we work on the broadcasting model.
To prove Theorem \ref{thm:broadcasting} we
analyze the average sensitivity of the
reconstruction algorithm used by Brightwell and Winkler \cite{BW},
which we refer to as the BW algorithm. For any configuration $\sigma$ as the input,
the algorithm works in a bottom up manner labeling each vertex from the leaves:
a parent is labeled to occupied if all of its children are labeled to unoccupied; otherwise, it is labeled to
unoccupied. The algorithm will output the labeling of the root as the final result. Formally,
it can be described by the following deterministic
recursion deciding the labeling of every vertex:
\[
R_{\sigma}(v)  =\left\{
\begin{array}
[c]{cc}%
\sigma(v)  \text{ } & \text{if }\operatorname* v\in L\\
1 - \max\{R_{\sigma}(w_1),R_{\sigma}(w_2),\ldots,R_{\sigma}(w_b)\}  \text{ } & \text{otherwise}
\end{array}
\right.
\]
where $w_{1},\ldots,w_{b}$  are the children of $v$.
Finally, let $\A(\sigma) =\A(\sigma_h) =R_{\sigma}(r)$.
Note that, $\A(\sigma)$ only depends on the configuration $\sigma_h$ on the leaves.
The algorithm is proved to be effective in \cite{BW} when $\delta>0$.
Therefore, it can be used in our case to lower bound the relaxation time.
In this algorithm, by definition we have
\begin{equation}
\label{eq:s31}
\bar{S}_\A = O\big(n^{-1}\ExpSub{\sigma\sim\nu_h}{\#\{v\in L: \A(\sigma) = 1 \textrm{ and } \A(\sigma^{v}) = 0\}}\big),
\end{equation}
Due to the symmetry of the function $R_{\sigma}(v)$ and the measure $\nu_h$, the expectation can be further simplified as
\begin{equation}
\label{eq:s32}
\ExpSub{\sigma}{\#\{v\in L: \A(\sigma) = 1 \textrm{ and } \A(\sigma^{v}) = 0\}}=b^h\nu_h(\A(\sigma) = 1 \textrm{ and } \A(\sigma^{\hv}) = 0),
\end{equation}
where $\hv$ is now a fixed leaf.
To bound the right hand side of Eq.\eqref{eq:s32}, let $\kappa\in\Omega(T_h)$ be a fixed configuration such that $\A(\kappa) = 1$.
Let the path $\mathcal{P}$ from $\hv$ to the root $r$ be $u_{0} = \hv \leadsto u_{1}\leadsto\cdots\leadsto u_{h} = r$,
and for any $i > 0$, let $w_{i,j}$ be the children of $u_i$
so that the labeling is such that for $j =1$, $w_{i,1} = u_{i-1}$
and for $j\neq 1$, $w_{i,j}$ is not on the path~$\Pa$.
An important observation is that, in order to make $\A(\kappa)$ change to $0$
by changing only the configuration at $\hv$ of $\kappa$, a necessary condition for $\kappa$ is $R_\kappa(u_{i}) = 1 - R_\kappa(u_{i-1})$ for all $i \ge 1$.
Then for all $i\ge 1$ and $j\in\{2,\ldots,b\}$, we have $R_{\kappa}(w_{i,j}) = 0$.
To calculate the probability that a random $\kappa\sim\nu_h$ satisfies such conditions, it would be
easier if we expose the configurations along the path $\Pa$. Since then, conditioning
on the configurations on the path, the events $R_{\kappa}(w_{i,j}) = 0$ are independent for all $i,j$.
And if $\kappa(u_i) = 0$, we have for all $j>1$, the conditional probability of $R_{\kappa}(w_{i,j}) = 0$
equals $\ProbSub{\eta\sim\nu_{i-1}}{\A(\eta) = 0}$, the probability $\A$ algorithm outputs a $0$ over a random configuration $\eta$ of the leaves of
the complete tree $T_{i-1}$ with height $i-1$.
The analysis above leads to the following lemma, which bounds the probability $\nu_h(\A(\sigma) = 1 \textrm{ and } \A(\sigma^{\hv}) = 0)$.
\linji{changed $v$s to $u$s, added explanation of the key lemma here.}
\begin{lemma}
\label{lem:nonuni1}
For every $i > 0$, let $\eta \in \Omega(T_{i-1})$ be a configuration chosen randomly according to measure $\nu_{i-1}$, then
\[
\nu_{h}(\A(\sigma) = 1 \textrm{ and } \A(\sigma^{\hv}) = 0) \le \ExpSub{\kappa\sim\nu_h}{\prod\limits_{i>0:\kappa(u_i)=0} \ProbSub{\eta\sim\nu_{i-1}}{\A(\eta) = 0}^{(b-1)}}.
\]
\end{lemma}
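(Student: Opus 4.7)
The plan is to establish the inequality in two movements: (i) enlarge the event $\{\A(\sigma)=1,\, \A(\sigma^{\hv})=0\}$ to a purely ``off-path'' event that keeps only the necessary conditions for the flip at $\hv$ to propagate all the way to the root, and (ii) condition on the configuration along the path $\Pa$ and use the independence of the off-path subtrees under $\nu_h$ to factorize the resulting probability into the product on the right-hand side.

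For step (i), I would formalize the observation already highlighted in the text. From the recursion $R_\sigma(u_i)=1-\max_j R_\sigma(w_{i,j})$ with $w_{i,1}=u_{i-1}$, if any off-path label satisfied $R_\sigma(w_{i,j})=1$ then $R_\sigma(u_i)=0$ irrespective of $R_\sigma(u_{i-1})$, so the flip at $\hv$ would be absorbed before reaching $u_i$. Hence, for the flip to propagate all the way to $r$, we must have $R_\sigma(w_{i,j})=0$ for every $i\ge 1$ and every off-path child $w_{i,j}$, $j\in\{2,\dots,b\}$, and in particular
\[
\nu_h\bigl(\A(\sigma)=1,\, \A(\sigma^{\hv})=0\bigr)\;\le\;\nu_h\bigl(\forall i\ge 1,\ \forall j\ge 2:\ R_\sigma(w_{i,j})=0\bigr).
\]

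For step (ii), I would condition on the vector $\sigma|_\Pa=(\sigma(u_0),\dots,\sigma(u_h))$. By the Markov property of the broadcasting measure, the subtrees rooted at the $b-1$ off-path children of each $u_i$ are, conditionally on $\sigma|_\Pa$, mutually independent. When $\sigma(u_i)=0$ the broadcasting process restarts at every off-path child $w_{i,j}$ with the usual occupation probability $\omega/(1+\omega)$, so the subtree at $w_{i,j}$ has height $i-1$ and is distributed exactly as $\nu_{i-1}$; consequently $\Prob{R_\sigma(w_{i,j})=0\mid \sigma|_\Pa}=\ProbSub{\eta\sim\nu_{i-1}}{\A(\eta)=0}$. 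When $\sigma(u_i)=1$, the off-path child $w_{i,j}$ is forced unoccupied and its subtree no longer has law $\nu_{i-1}$, but it suffices to bound the corresponding factor by $1$; this is exactly why the product in the lemma ranges only over indices $i$ with $\sigma(u_i)=0$. Multiplying over $(i,j)$ inside the conditional expectation and then averaging over $\sigma|_\Pa$ (equivalently over $\sigma\sim\nu_h$, since the integrand depends only on $\sigma|_\Pa$) yields the stated bound after relabelling $\sigma$ as $\kappa$.

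The main routine obstacle is the decoupling step: verifying that conditioning on $\sigma|_\Pa$ indeed renders the off-path subtrees independent and, in the case $\sigma(u_i)=0$, faithfully reproduces the unconditional measure $\nu_{i-1}$ on the height-$(i-1)$ subtree rooted at $w_{i,j}$. Both facts follow directly from the recursive definition of the broadcasting process, but careful bookkeeping is needed around the subtree heights (the subtree at $w_{i,j}$ has height $i-1$, not $i$) and the edge case $i=0$, where $\hv$ itself has no off-path children and the residual constraint on $\sigma(\hv)$ required for $\A(\sigma)=1$ is already absorbed by the inclusion in step (i).
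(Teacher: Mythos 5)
Your proposal is correct and follows essentially the same route as the paper's own proof: you isolate the necessary condition that every off-path child $w_{i,j}$ satisfies $R_\sigma(w_{i,j})=0$, condition on the configuration along $\Pa$, use the conditional independence of the off-path subtrees together with the fact that when $\sigma(u_i)=0$ the subtree at $w_{i,j}$ has law $\nu_{i-1}$, and bound the factors for $\sigma(u_i)=1$ by $1$. The only difference is presentational (you spell out the height bookkeeping and the $i=0$ edge case a bit more explicitly), so no further changes are needed.
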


Complete proofs of lemmas in this section are deferred to Section \ref{App:broadcasting}. To use Lemma~\ref{lem:nonuni1},
we derive the following uniform bound on the probability $\ProbSub{\eta\sim\nu_{i}}{\A(\eta) = 0}$,
for all~$i$. Here and through out the paper, $b_0(\delta)$ is a function explicitly defined in Lemma~\ref{lem:b0h} in Section~\ref{App:tlemmas}.
This function is of order $\exp(\delta^{-1}\ln(\delta^{-1}))$ as $\delta\to 0$ and remains bounded as $\delta \to \infty$.\ricardo{Info about $b_0(\delta)$ introduced, and the statement of the following lemma was changed.}

\begin{lemma}
\label{lem:nonuni2}
Let $\delta>0$, and let $\omega=(1+\delta)\ln b/{b}$. For all
$b\geq \BW$ and $i \geq 1$,
\[
\ProbSub{\eta\sim\nu_i}{\A(\eta) = 0}\leq \frac{(1.01)^{1/b}}{1+\omega}.
\]
\end{lemma}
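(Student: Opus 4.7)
The plan is to prove the bound by induction on $i$, working with a two-step recursion for $p_i := \ProbSub{\eta\sim\nu_i}{\A(\eta)=0}$. To set up the recursion, I condition on the spin at the root of $T_i$. Let $g_i$ denote the probability that $R_\eta$ outputs $0$ at the root of a subtree of height $i$ when that root is \emph{unoccupied} (so its children broadcast freely), and let $f_i$ be the analogous quantity conditioned on the root being occupied. Since $R_\eta(r)=0$ iff some child $w_j$ has $R_\eta(w_j)=1$, and since given the root's spin the subtrees at the children are conditionally independent, I obtain
\[
f_i \;=\; 1-g_{i-1}^b, \qquad g_i \;=\; 1-p_{i-1}^b, \qquad p_i \;=\; \tfrac{\omega}{1+\omega}\,f_i+\tfrac{1}{1+\omega}\,g_i,
\]
which combines for $i\ge 2$ into
\[
p_i \;=\; \tfrac{\omega}{1+\omega}\bigl(1-(1-p_{i-2}^b)^b\bigr) \;+\; \tfrac{1}{1+\omega}\bigl(1-p_{i-1}^b\bigr).
\]
The base data are $p_0=1/(1+\omega)$ and $p_1=\tfrac{1}{1+\omega}\bigl(1-(1+\omega)^{-b}\bigr)$, obtained from $g_0=1$ and $f_1=0$.

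Set $\alpha:=(1.01)^{1/b}/(1+\omega)$. The base case $i=1$ already gives $p_1<1/(1+\omega)<\alpha$. For the inductive step ($i\ge 2$), assume $p_{i-1},p_{i-2}\le \alpha$ (for $i=2$ this uses $p_0<\alpha$). Bernoulli's inequality $(1-x)^b\ge 1-bx$ yields
\[
1-(1-p_{i-2}^b)^b \;\le\; b\,p_{i-2}^b \;\le\; b\,\alpha^b \;=\; \tfrac{1.01\,b}{(1+\omega)^b},
\]
and using also the trivial bound $1-p_{i-1}^b\le 1$,
\[
p_i \;\le\; \tfrac{1}{1+\omega}\left[1+\tfrac{1.01\,\omega\,b}{(1+\omega)^b}\right].
\]
To finish, it suffices to show $\tfrac{1.01\,\omega\,b}{(1+\omega)^b}\le (1.01)^{1/b}-1$; using $(1.01)^{1/b}-1\ge \ln(1.01)/b$ (from $e^x\ge 1+x$), this reduces to $\tfrac{1.01\,\omega\,b^2}{(1+\omega)^b}\le \ln(1.01)$. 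With $\omega=(1+\delta)\ln b/b$ we have $\omega b^2=(1+\delta)\,b\ln b$, and from $\ln(1+\omega)\ge \omega-\omega^2/2$ we get $(1+\omega)^b\ge b^{1+\delta}\,e^{-(1+\delta)^2(\ln b)^2/(2b)}$, so the left-hand side is of order $(\ln b)/b^{\delta}$ and the inequality holds for all $b\ge \BW$, with $\BW$ chosen as in the appendix (of order $\exp(\delta^{-1}\ln\delta^{-1})$ to absorb both the $\delta^{-1}\ln b$ prefactor and the $(\ln b)^2/b$ correction from the lower bound on $(1+\omega)^b$).

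The main obstacle is purely the bookkeeping of the threshold $\BW$: one must verify that the crude Bernoulli bound, combined with the $(\ln b)^2/b$ error in the approximation $(1+\omega)^b\approx b^{1+\delta}$, still fits under the slack provided by the factor $(1.01)^{1/b}-1\approx \ln(1.01)/b$ in the target. The inductive skeleton itself is straightforward, and the recursion is precisely the natural one obtained from the definition of $R_\eta$ combined with the branching structure of the broadcast process.
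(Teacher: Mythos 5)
Your proof is correct and follows essentially the same route as the paper: the same two-step recursion for $\Pr_{\nu_i}[\A=0]$ obtained by conditioning on the root's spin, the same double induction with Bernoulli's inequality, and the same closing condition $\tfrac{1.01(\omega b)^2}{\lambda}\le\ln(1.01)$, which is exactly what the paper's definition of $\BW$ (via Lemma~\ref{lem:b0h}, with an extra factor of $2$ to spare) guarantees for all $b\ge\BW$. The only cosmetic difference is that you bound $p_i$ directly via $(1.01)^{1/b}-1\ge\ln(1.01)/b$, whereas the paper raises to the $b$-th power and uses $1+u\le e^u$.
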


Combining Equations \eqref{eq:s31}, \eqref{eq:s32}, Lemma \ref{lem:nonuni1} and Lemma \ref{lem:nonuni2}, we are able to upper bound the
average sensitivity of the BW\ algorithm:
\begin{equation*}
\bar{S}_\A = O\big( \nu_h(\A(\sigma) = 1 \textrm{ and } \A(\sigma^{\hv}) = 0) \big)
= O\left(\ExpSub{\kappa\sim\nu_h}{  \left(  \frac{1.01\omega(1+\omega)}{\lambda}\right)^{\#\left\{  i:\kappa(u_{i})=0\right\}}}\right)  \text{.}
\end{equation*}

In this expectation, the number of unoccupied vertices in the path $\mathcal{P}$ can be trivially lower bounded by $h/2$, since it is impossible that there exists $i > 0$, $\kappa(u_i) = \kappa(u_{i-1}) = 1$.
Therefore, the above expectation can be easily bounded by $O^*(n^{-(1+\delta)/2})$. This is not good enough in our case. We sharpen the bound using Lemma \ref{lem:asympunn}
in Section \ref{App:tlemmas}, leading to the following theorem, whose complete proof is contained in Section \ref{App:broadcasting}.
\begin{theorem}
\label{th:slownonunif}
Let $\delta > 0$, and let $\omega=(1+\delta)\ln b /{b}$. For all
$b\geq \BW$,
\juan{Actually $\ln\left( \lambda/(1.01\omega b)^2\right) = b\ln\left((1+\omega)/1.01\omega b^2\right)$ could we use this one?}
\linji{Is it slightly wrong? the power $b$ is not over the whole thing.}
\[
\Trel = \Omega\big(n^{d}\big)\text{,\quad where }d=\left(  1+\frac{\ln\left(
\lambda/(1.01\omega b)^2\right)}{2\ln b}\right).
\]
\end{theorem}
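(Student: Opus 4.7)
The plan is to apply Theorem~\ref{thm:sensitivity}: it suffices to show $\bar S_\A = O(n^{-d+o_b(1)})$. The paragraph preceding the theorem statement already reduces the task to estimating
\[
E := \ExpSub{\kappa\sim\nu_h}{\alpha^{N(\kappa)}},\qquad \alpha := \frac{1.01\,\omega(1+\omega)}{\lambda} = \frac{1.01}{(1+\omega)^{b-1}},
\]
where $N(\kappa)=\#\{i\ge 1:\kappa(u_i)=0\}$ counts the unoccupied internal vertices on the path $\Pa = u_0\leadsto\cdots\leadsto u_h=r$. Lemma~\ref{lem:asympunn} is designed to produce a sharp asymptotic for $E$ via a transfer matrix argument, which I would carry out as follows.

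Under $\nu_h$, the sequence $\bigl(\kappa(u_h),\ldots,\kappa(u_0)\bigr)$ is a two-state Markov chain, since the broadcast process is Markovian along any root-to-leaf path: a child is unoccupied with probability $1/(1+\omega)$ given an unoccupied parent and with probability $1$ given an occupied parent. Let $T$ be this $2\times 2$ transition matrix, let $\pi = (1/(1+\omega),\omega/(1+\omega))^\top$ be the distribution of $\kappa(r)$, and let $D = \mathrm{diag}(\alpha,1)$ weight state $0$ by $\alpha$. Expanding the sum over path configurations as a matrix product yields $E = \pi^\top (DT)^h\mathbf{1}$. The matrix $DT$ has characteristic polynomial $x^2 - \tfrac{\alpha}{1+\omega}x - \tfrac{\alpha\omega}{1+\omega}$, and because $\alpha/(\omega(1+\omega)) = O\bigl(1/(b^{\delta}\ln b)\bigr) = o_b(1)$ in the regime $\omega=(1+\delta)\ln b/b$, its dominant eigenvalue satisfies
\[
\rho_+ \;=\; \bigl(1+o_b(1)\bigr)\sqrt{\tfrac{\alpha\omega}{1+\omega}}\;=\;\bigl(1+o_b(1)\bigr)\sqrt{\tfrac{1.01\,\omega^2}{\lambda}}.
\]
An explicit computation of the eigendecomposition then gives $E = n^{o_b(1)}\rho_+^h$.

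Since $b^h = \Theta(n)$, the prefactor $n^{-1}b^h$ inherited from \eqref{eq:s31}--\eqref{eq:s32} is $\Theta(1)$, so $\bar S_\A = n^{o_b(1)}\rho_+^h$. Using $h = (1+o_b(1))\log_b n$,
\[
\rho_+^h \;=\; n^{\log_b\rho_+\,(1+o_b(1))} \;=\; n^{\tfrac{1}{2}\log_b(1.01\,\omega^2/\lambda) + o_b(1)} \;=\; n^{-d+o_b(1)},
\]
where the last equality uses the algebraic identity $d = \tfrac{1}{2}\log_b\bigl(\lambda/(1.01\omega)^2\bigr)$ (obtained by expanding $(1.01\omega b)^2$ in the definition of $d$) together with $\log_b 1.01 = o_b(1)$. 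Theorem~\ref{thm:sensitivity} then gives $\Trel = \Omega(n^{d-o_b(1)})$, which is the claimed bound.

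The hard part is securing the uniform spectral estimate $E = n^{o_b(1)}\rho_+^h$. Its subtlety is that the subdominant eigenvalue of $DT$ satisfies $|\rho_-|/\rho_+ = 1 - o_b(1)$, so a priori the term $c_-\rho_-^h$ could cancel or dominate $c_+\rho_+^h$ in the eigendecomposition $E = c_+\rho_+^h + c_-\rho_-^h$. One checks instead that the coefficients $c_\pm$, read off from the explicit eigenvectors of the concrete $2\times 2$ matrix $DT$, satisfy $|c_\pm| = O(\sqrt{\omega/\alpha}) = O(b^{\delta/2}\sqrt{\ln b}) = n^{o_b(1)}$, so that the trivial bound $E \le (|c_+|+|c_-|)\rho_+^h$ suffices. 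Packaging this careful but elementary two-by-two eigenvalue bookkeeping is the purpose of Lemma~\ref{lem:asympunn}.
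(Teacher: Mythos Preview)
Your approach is essentially the paper's: the transfer-matrix computation you set up is exactly what Lemma~\ref{lem:asympunn} packages (the paper arrives at the identical closed form $\rho_+=\frac{1.01\omega}{2\lambda}\bigl(1+\sqrt{1+4\lambda/1.01}\bigr)$ via a saddle-point evaluation of the combinatorial sum, but the content is the same two-by-two eigenvalue calculation you sketch). Your identification of the subtlety $|\rho_-|/\rho_+\to 1$ and your control of the eigen-coefficients are also correct.

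Where you lose the theorem is in the error bookkeeping. The statement is for \emph{fixed} $b\ge b_0(\delta)$ with $h\to\infty$; in that regime an ``$o_b(1)$'' in the exponent of $n$ is a fixed nonzero constant, so your conclusion $\Trel=\Omega(n^{d-o_b(1)})$ is strictly weaker than the asserted $\Omega(n^{d})$. Two places cause this slippage. First, writing $\rho_+=(1+o_b(1))\sqrt{1.01\,\omega^2/\lambda}$ and then raising to the $h$-th power converts a multiplicative $(1+o_b(1))$ into a genuine shift of the exponent of $n$; you cannot discard it. Second, the relation $h=(1+o_b(1))\log_b n$ is not correct in the relevant asymptotic (for fixed $b$ one has $h=\log_b n+O(1)$, an additive error, not a multiplicative one depending on $b$); and the prefactor $|c_+|+|c_-|=O(b^{\delta/2}\sqrt{\ln b})$ is simply $O_{b,\delta}(1)$, not $n^{o_b(1)}$.

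The fix is straightforward and is what the paper does: keep the exact eigenvalue and check the clean inequality $\rho_+\le 1.01\,\omega/\sqrt{\lambda}=b^{-d}$. Indeed, $1+\sqrt{1+4\lambda/1.01}\le 2\sqrt{\lambda}$ holds once $\lambda$ is large enough, which the hypothesis $b\ge b_0(\delta)$ provides. Then $E\le (|c_+|+|c_-|)\rho_+^{h}=O_{b,\delta}(1)\cdot b^{-dh}=O(n^{-d})$, and Theorem~\ref{thm:sensitivity} gives $\Trel=\Omega(n^{d})$ with no residual $o_b(1)$ in the exponent.
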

Theorem \ref{thm:broadcasting} is a simple corollary of Theorem \ref{th:slownonunif} by noticing that $d=1+\delta/2-O\left(\frac{\ln\ln b}{\ln b}\right)$. Furthermore, we can ``hide'' the fact that $b\geq b_0(\delta)$ in this residual term, by using the trivial lower bound $\Omega(n)$ for all $b<b_0(\delta)$ and $b_0(\delta)\approx \exp(\delta^{-1}\ln(\delta^{-1}))$ as $\delta \to 0$.

\subsection{Proofs.}
\label{App:broadcasting}
Note that throughout this paper, we will use the following notations for the relationships between two functions $f(x)$ and $g(x)$ for simplicity. If $\lim_{x\rightarrow \infty} f(x)/g(x) = 1$, we write $f(x) \approx g(x)$; if $f(x) = O(g(x))$, we write $f(x) \lesssim g(x)$ and if $f(x) = \Omega(g(x))$, we write $f(x) \gtrsim g(x)$. \linji{added this paragraph.}

\begin{proof}[Proof of Lemma \ref{lem:nonuni1}]
Let $\mathbf{x} = \{0,1\}^h$ be a valid configuration on the path $\mathcal{P}$.
Conditioning on $\kappa(u_i) = \mathbf{x}(i)$ for all $i$, we know that the events $R_{\kappa}(w_{i,j}) = 0$ are independent for all $i$ and $j$. Given $\kappa(u_i) = 0$, the
probability of the event $R_{\kappa}(w_{i,j}) = 0$ equals $\ProbSub{\eta\sim\nu_{i-1}}{\A(\eta) = 0}$; and given $\kappa(u_i) = 1$, the probability of this event can be trivially upper bounded by $1$ (this bound is ``safe'', in the sense that the actual quantity is close to $1$ for big $\lambda$).
By this, we can conclude that
\begin{align*}
\nu_{h}(\A(\kappa) & =1\text{ and }\A(\kappa^{\hv})=0)\\
& \leq \sum\limits_{\mathbf{x}}\nu_{h}(\kappa:\forall i,\kappa
(u_{i})=\mathbf{x}(u_{i}))\prod\limits_{i>0:\kappa(u_{i})=0}%
\ProbSub{\eta\sim\nu_{i-1}}{\A(\eta) = 0}^{(b-1)}\\
&
=\ExpSub{\kappa}{\prod\limits_{i>0:\kappa(u_i)=0} \ProbSub{\eta\sim\nu_{i-1}}{\A(\eta) = 0}^{(b-1)}}.
\end{align*}
\end{proof}

\begin{proof}[Proof of Lemma \ref{lem:nonuni2}]
In the proof, we will use the fact that $\exp\left(\frac{2(1.01)(\omega b)^2 }{\lambda}\right)\leq 1.01$, whenever $b\geq \BW$ (Lemma \ref{lem:b0h}). Now, for simplicity, denote $\tg_i =  \ProbSub{\eta\sim\nu_{i-1}}{A(\eta) = 0}$.
First of all, notice the recurrences%
\begin{align*}
\tg_{i+1} &  =\frac{\omega}{1+\omega}\left(
1-\left(  1-  \tg_{i-1}^b\right)
^b \right)
+\frac{1}{1+\omega}\left(1-\tg_{i}^b\right)  \text{,}\\
\tg_{1} &  =\frac{1}{1+\omega}%
\quad\text{,\quad}
\tg_{2}=
\frac{1}{1+\omega}\left(1-\left(\frac{1}{1+\omega}\right)^{b}\right)  \text{.}%
\end{align*}
The result follows by an easy induction: For $h=1,2$, the result is
clear. On the other hand, from
the previous recurrences, if it is the case that $\tg_{i}\leq \frac{(1.01)^{1/b}}{1+\omega}$, then%
\begin{align*}
\tg_{i+1}  ^{b}  &
\leq\left[  \frac{\omega}{1+\omega}\left(  1-\left(  1-
\tg_{i-1}  ^{b}\right)  ^{b}\right)  +\frac{1}{1+\omega}\right]  ^{b}\\
& \leq\left[  \frac{\omega}{1+\omega}\left(  1-\left(  1-\frac{1.01\omega  }{\lambda}\right)  ^{b}\right)  +\frac{1}{1+\omega
}\right]  ^{b}\\
& \leq\left(  \frac{1+\frac{1.01\omega^2b  }%
{\lambda}}{1+\omega}\right)  ^{b}\leq\frac{\exp(1.01(\omega b)^2/\lambda)}{(1+\omega)^b}\leq\frac{1.01}{(1+\omega)^b}\text{,}%
\end{align*}
where the third inequality follows from the fact that $(1-u)^{b}\geq1-ub$ for $u<1$,
the fourth inequality follows from $(1+u)\leq e^{u}$, and the last inequality follows from the fact
that  $\exp\left(\frac{2(1.01)(\omega b)^2 }{\lambda}\right)\leq 1.01$ for $b\geq \BW$.
\end{proof}

\begin{proof}[Proof of Theorem \ref{th:slownonunif}]
From Lemma \ref{lem:asympunn} in Section~\ref{App:tlemmas}, we have that
\[
\Exp{  \left(  \frac{1.01\omega(1+\omega)  }{\lambda}\right)
  ^{\#\{  i:\sigma(u_{i})=0\}}}
\approx
\left(  1+\frac{  1-\epsilon  }{2\epsilon (1+\omega)}\right)
\frac{\left(  1+\epsilon\right)  }{2}
\left(
\frac{1.01 \omega}{2\lambda}
\left[
1+\sqrt{1+4\lambda/1.01}
\right]
\right)^{h}%
\]
where $\eps =\left[\sqrt{1+4\lambda/1.01}\right]^{-1}$. The previous term is asymptotically dominated by $\left(1.01\frac{\omega}{\lambda^{1/2}}\right)^h$. Therefore,
\[
\bar{S}_A =O \left(\left[\frac{1.01\omega}{\lambda^{1/2}}\right]^h\right)
=O\left(n^{-\left[
1+\frac{\ln\left(\lambda/(1.01\omega b)^2\right)}{2\ln b}
\right]}\right)\text{.}
\]


Now, from \cite[Section 5]{BW}, it is known that the BW\ algorithm is effective for
$\omega> (1+\delta)\ln b/{b}$ and $b>\BW$. (This can also be deduced from
Lemma \ref{lem:nonuni2} by following the same steps as in Proposition \ref{prop:eff2}), therefore Theorem \ref{thm:sensitivity} applies. The conclusion follows.
\end{proof}

\subsection{Some Technical Lemmas.}
\label{App:tlemmas}

\begin{lemma}
\label{lem:b0h}
Define $b_0(\delta)=\min\{ b_0:\exp\left( \frac{2(1.01)(\omega b)^2 }{\lambda}\right)\leq 1.01 \text{ for all } b\geq b_0 \}$. Then $b_0(\delta)$ is a continuous function such that
\begin{enumerate}
\item $b_0(\delta)<\infty$ for all $\delta>0$ (that is, it is well defined).
\item $b_0(\delta)\approx\exp\left((1+o(1))\delta^{-1}\ln(\delta^{-1})\right)$ as $\delta \to 0$.
\item $b_0(\delta)\approx b_0(\infty)$ as $\delta \to \infty$, where $ b_0(\infty)$ is a fixed constant $\leq 2$.
\end{enumerate}
\end{lemma}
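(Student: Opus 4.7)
\noindent\textbf{Proof plan for Lemma~\ref{lem:b0h}.} Set $c:=\ln(1.01)$ and $F(b,\delta):=2(1.01)(\omega b)^2/\lambda$, so the condition in the definition is $F(b,\delta)\le c$. Using $\omega b=(1+\delta)\ln b$ and $\lambda=\omega(1+\omega)^b$ the expression simplifies to
\[
F(b,\delta)=\frac{2(1.01)(1+\delta)\,b\ln b}{(1+\omega)^b},\qquad \omega=\frac{(1+\delta)\ln b}{b}.
\]
The whole proof is driven by the single asymptotic
\[
(1+\omega)^b=\exp\!\Bigl(b\omega-\tfrac{b\omega^2}{2}+O(b\omega^3)\Bigr)=b^{1+\delta}\,\exp\!\bigl(O(\ln^2 b/b)\bigr),
\]
which is obtained from a Taylor expansion of $\ln(1+\omega)$ together with $b\omega=(1+\delta)\ln b$ and $b\omega^2=(1+\delta)^2\ln^2 b/b=o(1)$. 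Substituting gives the clean equivalent
\[
F(b,\delta)\;\sim\;\frac{2(1.01)(1+\delta)\ln b}{b^{\delta}}\qquad\text{as }b\to\infty.
\]

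\noindent\emph{Part 1 and continuity.} For any fixed $\delta>0$, $b^{\delta}/\ln b\to\infty$ so $F(b,\delta)\to 0$, hence the set $\{b_0:F(b,\delta)\le c \text{ for all } b\ge b_0\}$ is nonempty and $b_0(\delta)<\infty$. For continuity I would use the joint continuity of $F$ together with the observation that in a neighborhood of the threshold curve $\{F=c\}$ the derivative $\partial_b F$ is strictly negative (the factor $\ln b/b^{\delta}$ is eventually strictly decreasing, and a direct computation shows $\partial_b F<0$ once $b^{\delta}>2(1+\delta)/\delta$, which covers the region near the threshold for small $\delta$). Monotonicity in $b$ near the threshold combined with joint continuity of $F$ makes $b_0(\delta)$ a graph of a continuous function by the implicit function theorem.

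\noindent\emph{Parts 2 and 3 (asymptotics).} For part~2, near the threshold $F(b_0,\delta)=c$ the asymptotic for $F$ yields
\[
b_0^{\delta}\;=\;\frac{2(1.01)(1+\delta)}{c}\ln b_0\,(1+o(1)).
\]
Setting $L=\ln b_0$ and taking logarithms gives $\delta L=\ln L+O(1)$ as $\delta\to 0$. Bootstrapping once (initial estimate $L\asymp\delta^{-1}\ln(\delta^{-1})$, which makes $\ln L=\ln(\delta^{-1})+\ln\ln(\delta^{-1})+O(1)$, so the residual term $\ln L$ contributes only a lower-order correction to $\delta L$) yields $\ln b_0(\delta)=(1+o(1))\delta^{-1}\ln(\delta^{-1})$ as required. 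For part~3, evaluate at $b=2$: $\omega=(1+\delta)(\ln 2)/2\to\infty$ as $\delta\to\infty$, so
\[
F(2,\delta)=\frac{8(1.01)\,\omega}{(1+\omega)^2}\;\sim\;\frac{8(1.01)}{\omega}\;\longrightarrow\;0.
\]
A short calculation shows $F(b,\delta)\le c$ for all $b\ge 2$ once $\delta$ is large (since $(1+\omega)^b$ grows super-polynomially in $\omega$), giving $b_0(\delta)\le 2$ eventually, and monotonicity of $b_0$ in $\delta$ (inherited from the fact that $F$ decreases in $\delta$ once $\omega$ is not too small, because the $(1+\omega)^{-b}$ factor dominates the $(1+\delta)$ factor) produces a well-defined limit $b_0(\infty)\le 2$.

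\noindent\emph{Main obstacles.} The main technical nuisance is justifying the passage from $F(b,\delta)\sim 2(1.01)(1+\delta)\ln b/b^{\delta}$ to the \emph{exact} threshold equation used in the bootstrapping for part~2: the multiplicative error $\exp(O(\ln^2 b/b))$ in $(1+\omega)^b$ must be shown to be $1+o(1)$ along the curve $b=b_0(\delta)$, and since $\ln^2 b/b\to 0$ precisely because $\ln b\sim \delta^{-1}\ln\delta^{-1}$ is small compared to $b^{\delta}\sim \delta^{-1}$, a careful two-way inequality (upper bound for existence of $b_0(\delta)$, lower bound to exclude smaller candidates) is needed. The secondary obstacle is establishing the strict monotonicity in $b$ near the threshold that underlies the continuity claim; this requires the direct derivative computation sketched above rather than a soft argument.
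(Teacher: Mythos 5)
Your proposal is correct and follows essentially the same route as the paper: both reduce $\exp\bigl(2(1.01)(\omega b)^2/\lambda\bigr)$ via $(\omega b)^2/\lambda = \omega b^2/(1+\omega)^b$ and the approximation $(1+\omega)^b\approx b^{1+\delta}$ to the asymptotic $2(1.01)(1+\delta)\ln b/b^{\delta}$, and then read off the three regimes ($b\to\infty$ with $\delta$ fixed; the threshold $\ln b_0\approx\delta^{-1}\ln(\delta^{-1})$ as $\delta\to 0$, which the paper obtains by testing $b=\exp(\beta\delta^{-1}\ln(\delta^{-1}))$ for $\beta\le 1$ versus $\beta>1$ rather than by your bootstrap of the threshold equation; and fixed $b$ with $\delta\to\infty$). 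The monotonicity-in-$b$ and continuity points you flag as obstacles are details the paper's own proof silently omits, so your treatment is, if anything, slightly more complete.
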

\begin{proof}
For (1), just notice that for $\delta>0$ fixed, we have that, as $b\to \infty$, \[\exp\left( \frac{2(1.01)(\omega b)^2 }{\lambda}\right) \approx 1<1.01.\]

For (2), notice that if we let $b=\exp\left(\beta\delta^{-1}\ln(\delta^{-1})\right)$, then as $\delta\to 0$, we have that \[\exp\left( \frac{2(1.01)(\omega b)^2 }{\lambda}\right) \approx \exp\left(2(1.01)\beta\delta^{\beta-1}\ln(\delta^{-1})\right).\] Therefore, if $\beta\leq 1$, it is the case that  $\exp\left( \frac{2(1.01)(\omega b)^2 }{\lambda}\right) \to \infty$ as $\delta \to 0$, while if $\beta>1$ the same expression goes to $1$.

For (3), notice that for $b$ fixed,  $\exp\left( \frac{2(1.01)(\omega b)^2 }{\lambda}\right) \approx \exp\left( \frac{\Theta(1) }{\delta^{b-1}}\right)$, as $\delta\to \infty$.
\end{proof}

\begin{lemma}
\label{lem:asympunn}Let $\zeta_{0},\zeta_{1},\ldots$ be a Markov process with
state space $\left\{  0,1\right\}  $, such that $\zeta_{0}=0$ and with
transition rates $p_{0\rightarrow0}=p$, $p_{0\rightarrow1}=q$,
$p_{1\rightarrow0}=1$, $p_{1\rightarrow1}=0$. Let $N_{h}=\#\left\{  1\leq
i\leq h:\zeta_{i}=0\right\}  $, then
\[
\Exp{  a^{N_{h}}}  \approx\left(  1+\frac{p\left(
1-\epsilon\right)  }{2\epsilon}\right)  \frac{\left(  1+\epsilon\right)  }%
{2}\left(  \frac{pa}{2}\left[  1+\sqrt{1+4q/\left(  ap^{2}\right)  }\right]
\right)  ^{h}\text{.}
\]
where $\epsilon=\frac{1}{\sqrt{1+4q/\left(  ap^{2}\right)  }}$.
Moreover, if the transition rate $p_{0\rightarrow0}$ is inhomogeneous but such
that $\left\vert p-p_{0\rightarrow0}^{\left(  i\right)  }\right\vert
\leq\delta$, then
\[
\Exp{  a^{N_{h}}}
\lesssim
\left(1+\frac{\left(p+\delta\right)\left(  1-\bar{\epsilon}\right)}{2\bar{\epsilon}}\right)
\frac{\left(  1+\bar{\epsilon}\right)}{2}
\left(\frac{\left( p+\delta\right)a}{2}
\left[1+\sqrt{1+4\left(q+\delta\right)/\left(a\left(p+\delta\right)^{2}\right)  }
\right]
\right)^{h}
\text{,}%
\]
where $\bar{\epsilon}=\frac{1}{\sqrt{1+4\left(  q+\delta\right)  /\left(
a\left(  p+\delta\right)  ^{2}\right)  }}$.
\end{lemma}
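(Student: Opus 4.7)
}
The plan is to treat the problem by the transfer-matrix method. Define
$f_h(x) = \Exp{a^{N_h}\indicator{\zeta_h = x}}$ for $x\in\{0,1\}$, and let $F_h = (f_h(0), f_h(1))^T$. Conditioning on $\zeta_h$ and using the fact that contributing a factor $a$ is triggered precisely when the new state is $0$, one obtains the one-step recursion $F_h = M F_{h-1}$ with
\[
M = \begin{pmatrix} ap & a \\ q & 0 \end{pmatrix},
\qquad F_0 = \begin{pmatrix} 1 \\ 0 \end{pmatrix},
\]
and $\Exp{a^{N_h}} = (1,1)\,M^h (1,0)^T$. Thus the whole problem reduces to computing the dominant part of $(1,1) M^h (1,0)^T$.

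Next I would diagonalize $M$. Its characteristic polynomial is $\lambda^2 - ap\lambda - aq = 0$, whose roots are
\[
\lambda_\pm = \frac{ap}{2}\Bigl[1 \pm \sqrt{1 + 4q/(ap^2)}\Bigr] = \frac{ap}{2}\Bigl(1 \pm \tfrac{1}{\epsilon}\Bigr),
\]
so $\lambda_+$ is exactly the base of the claimed exponential. Since $\lambda_+ + \lambda_- = ap > 0$ and $\lambda_+ - \lambda_- = ap/\epsilon > 0$, we have $\lambda_+ > 0 > \lambda_-$ and $|\lambda_-| < \lambda_+$, so $\lambda_+^h$ dominates as $h\to\infty$. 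A right eigenvector for $\lambda_\pm$ is $v_\pm = (1, q/\lambda_\pm)^T$, and writing $(1,0)^T = \alpha_+ v_+ + \alpha_- v_-$ yields after a one-line calculation $\alpha_+ = \lambda_+/(\lambda_+ - \lambda_-) = (1+\epsilon)/2$. Therefore
\[
\Exp{a^{N_h}} = \alpha_+ \bigl(1 + q/\lambda_+\bigr)\lambda_+^h + \alpha_- \bigl(1 + q/\lambda_-\bigr)\lambda_-^h \ \sim\ \frac{1+\epsilon}{2}\Bigl(1 + \frac{q}{\lambda_+}\Bigr)\lambda_+^h.
\]
The only remaining algebraic step is to simplify the prefactor. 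Using the identity $q = ap^2(1-\epsilon^2)/(4\epsilon^2)$, which is just the definition of $\epsilon$ rearranged, one computes $q/\lambda_+ = p(1-\epsilon)/(2\epsilon)$, matching the stated asymptotic exactly.

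For the inhomogeneous part, I would exploit entrywise monotonicity of the transfer matrix. When the transition rate out of $0$ is time dependent with $|p - p_{0\to 0}^{(i)}|\le \delta$ (and correspondingly $q_i = 1 - p_i \le q+\delta$), the transfer matrix at step $i$ is $M_i = \bigl(\begin{smallmatrix} ap_i & a \\ q_i & 0\end{smallmatrix}\bigr)$, which is dominated entrywise by $M' = \bigl(\begin{smallmatrix} a(p+\delta) & a \\ q+\delta & 0\end{smallmatrix}\bigr)$ (note that even though $p_i$ and $q_i$ are not simultaneously at their maxima, the comparison is done entry by entry). Since all entries are non-negative and $F_0$ has non-negative entries, $M_h M_{h-1}\cdots M_1 F_0 \le (M')^h F_0$ entrywise, so $\Exp{a^{N_h}}_{\mathrm{inhom}} \le (1,1)(M')^h(1,0)^T$, and applying the homogeneous result to $M'$ yields the bound with $p+\delta$, $q+\delta$, $\bar\epsilon$ in place of $p$, $q$, $\epsilon$.

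The routine calculations are the eigenvalue/eigenvector expansion and the identity used to rewrite $q/\lambda_+$; the main conceptual point is the entrywise monotonicity that makes the inhomogeneous extension painless. I do not anticipate a serious obstacle, but I would be careful to verify (a) that $|\lambda_-|<\lambda_+$ so that the subdominant eigenvalue really is asymptotically negligible, and (b) that the algebra linking $q/\lambda_+$ to $p(1-\epsilon)/(2\epsilon)$ is done correctly, since the lemma is used as a black box in the proof of Theorem~\ref{th:slownonunif}.
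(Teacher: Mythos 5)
Your proof is correct, but it follows a genuinely different route from the paper. You set up a transfer matrix $M=\bigl(\begin{smallmatrix} ap & a\\ q & 0\end{smallmatrix}\bigr)$ and extract the asymptotics from its spectral decomposition: the characteristic equation $\lambda^2-ap\lambda-aq=0$ indeed gives $\lambda_\pm=\frac{ap}{2}(1\pm\epsilon^{-1})$, your coefficient $\alpha_+=\lambda_+/(\lambda_+-\lambda_-)=(1+\epsilon)/2$ is right, and the identity $q=ap^2(1-\epsilon^2)/(4\epsilon^2)$ does yield $q/\lambda_+=p(1-\epsilon)/(2\epsilon)$, so your exact two-term formula reproduces the stated prefactor, with the $\lambda_-$ term negligible since $|\lambda_-|<\lambda_+$ (this requires $q>0$ so that $\epsilon<1$ and $M$ is diagonalizable, which holds in the application). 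The paper instead writes $\Exp{a^{N_h}}$ as an explicit combinatorial sum over the number of visits to state $1$, namely sums of terms $\binom{h-k}{k}p^{h-2k}q^k a^{h-k}$ (and a companion sum for paths ending in state $1$), and then evaluates these binomial sums by a saddle-point argument, locating the maximizing $t^*=\frac12(1-\epsilon)$; the inhomogeneous bound is obtained there by dominating each factor $p_{0\to0}^{(i)}\le p+\delta$, $q_i\le q+\delta$ termwise inside the sum, which is the exact analogue of your entrywise matrix domination $M_i\le M'$ (valid since all entries are nonnegative, as $a>0$ in the application). Your spectral route buys an exact closed form and avoids the saddle-point asymptotics entirely, at the price of the (easy) diagonalizability check; the paper's route is more elementary term-by-term combinatorics but leaves the asymptotic evaluation of the binomial sums as the technical step. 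Both yield the same constants, and your inhomogeneous extension is sound.
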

\begin{proof}
Straightforward combinatorics leads to the expression
\[
\Exp{  a^{N_{h}}}  =
{\textstyle\sum\nolimits_{k=0}^{\left\lfloor h/2\right\rfloor }}
\tbinom{h-k}{k}p^{h-2k}q^{k}a^{h-k}+
{\textstyle\sum\nolimits_{k=1}^{\left\lfloor (h+1)/2\right\rfloor }}
\tbinom{h-k}{k-1}p^{h-2k+1}q^{k}a^{h-k}
\]
Now, for the first term, we have that
\[{\textstyle\sum\nolimits_{k=0}^{\lfloor h/2\rfloor}}
\tbinom{h-k}{k}p^{h-2k}q^{k}a^{h-k}
= (pa)^{h}
{\textstyle\sum\nolimits_{k=0}^{ \lfloor h/2 \rfloor }}
\tbinom{h-k}{k}x^{k},\]
where $x=\frac{q}{ap^{2}}$, which by standard saddle point methods, noticing that the function
$\displaystyle\phi(t)=\lim_{h \to \infty} h^{-1} \ln \left[\tbinom{h-th}{th}x^{th}\right]$
reaches its maximum at the point $t^{\ast}=\frac{1}{2}(1-\eps)$, where $\eps=1/{\sqrt{1+4x}}$ and $\phi^{\prime\prime}(t^{\ast})
=\sqrt{\frac{4}{\eps(1-\eps)(1+\eps)}}$, implying that
\[
\frac{1}{(pa)^{h}}
{\textstyle\sum\nolimits_{k=0}^{ \lfloor h/2 \rfloor }}
\tbinom{h-k}{k}p^{h-2k}q^{k}a^{h-k}
\approx
\frac{(1+\eps)}{2}\left(\frac{1+\sqrt{1+4x}}{2}\right)^{h}.
\]
Similarly,
\[
\frac{1}{(pa)^{h}}
{\textstyle\sum\nolimits_{k=1}^{ \lfloor (h+1)/2 \rfloor }}
\tbinom{h-k}{k-1}p^{h-2k+1}q^{k}a^{h-k}
\approx
\frac{p(1-\eps)}{2\eps}
\frac{(1+\eps)}{2}\left(\frac{1+\sqrt{1+4q/(ap^{2})}}{2}\right)^{h},
\]
from where the result follows.
In the inhomogeneous case, we have that
\[
\Exp{  a^{N_{h}}}  \leq%
{\textstyle\sum\nolimits_{k=0}^{\left\lfloor h/2\right\rfloor }}
\tbinom{h-k}{k}\left(  p+\delta\right)  ^{h-2k}\left(  q+\delta\right)
^{k}a^{h-k}+%
{\textstyle\sum\nolimits_{k=1}^{\left\lfloor (h+1)/2\right\rfloor }}
\tbinom{h-k}{k-1}\left(  p+\delta\right)  ^{h-2k+1}\left(  q+\delta\right)
^{k}a^{h-k}\text{,}%
\]
from where the result follows using the same asymptotic.
\end{proof}

\section{ ``Bad" Boundary Conditions: Proof of Theorem~\ref{thm:main}.\ref{thm:above-lower}}
\label{sec:lower-bound-hardcore}

First, we will show that for any $\omega$,
there exists a sequence of boundary conditions,
denoted as $\GammaN := \{\BD_i\}_{i>0}$,
one for each complete tree of height $i>0$,
such that if $i\rightarrow \infty$, the probability of the root being occupied converges to $\frac{\omega}{1+\omega}$.
Later in this section we will exploit such a construction to attain in full
the conclusion of Part \ref{thm:above-lower} of Theorem~\ref{thm:main}.

As a first observation, note that, the Gibbs measure for the hard-core model on
$T_i$  with boundary condition $\BD$ is the same as the
Gibbs measure for the hard-core model (with the same activity $\lambda$)
on the tree $T$ obtained from $T_i$ by deleting all of the leaves as well as the parent of each (occupied)
leaf $v\in\BD$.
It will be convenient to work directly with such ``trimmed'' trees, rather than the
complete tree with boundary condition.
Having this in mind, our construction will be inductive in the following way.
We will define a sequence of (trimmed) trees $\{(L_i,U_i)\}_{i\geq 0}$ such
that $L_{i+1}$ is comprised of $s_{i+1}$ copies of $U_i$ and $b-s_{i+1}$ copies
of $L_i$ with $\{s_i\}_{i \geq 1}$ properly chosen.
Similarly, $U_{i+1}$ is comprised of $t_{i+1}$ copies of $U_i$ and $b-t_{i+1}$
copies of $U_i$, with $\{t_i\}_{i \geq 1}$ properly chosen.

We will show that, for either $T^*_i=L_i$, or $T^*_i=U_i$,
it is the case that the \textquoteleft$Q$\textquoteright-value, defined as:\linji{added a sentence after ``where...''.}
\[
Q(T^*_i) = \frac{ \mu_{T^*_i}\left(\sigma(r)=1\right)}{\omega \mu_{T^*_i}\left( \sigma(r)=0\right)},
\]
where $\mu_{T^*_i}(\cdot)$ is the hard-core measure on the trimmed tree $T^*_i$, satisfies $Q(T^*_i) \rightarrow 1$.
Note that if $Q(T^*_i) = 1$, then the probability of the root being occupied is $\omega/(1+\omega)$ as desired.
To attain this, we will
construct $L_i$ and $U_i$ in such a way that $Q(U_i) \ge 1$ and $Q(L_i) \le 1$.

The recursion for $Q(L_{i+1})$ can be derived easily as
\[
Q(L_{i+1}) =  \frac{(1+\omega)^b}{(1+\omega Q(U_i))^{s_{i+1}}(a+\omega Q(L_i))^{b-s_{i+1}}},
\]
and a similar equation holds for $Q(U_{i+1})$ by replacing $s_{i+1}$ with $t_{i+1}$.


To keep the construction simple,
we inductively define the appropriate $t_i$ and $s_i$, so that once $L_i$ and $U_i$ are given, we let $t_{i+1}$ be the minimum choice so that the resulting $Q$-value
is $\geq 1$,
more precisely, we let:
\[
t_{i+1}=\arg\min_{\ell}\{Q = \frac{(1+\omega)^b}{(1+Q(U_{i}))^\ell(1+\omega Q(L_{i}))^{b-\ell}}: Q \ge 1\}.
\]
\[
\hbox{And similarly, we let:}~~s_{i+1}=\arg\max_{\ell}\{Q = \frac{(1+\omega)^b}{(1+Q(U_{i}))^\ell(1+Q(L_{i}))^{b-\ell}}: Q \le 1\}.
\]

The recursion starts with $U_1$ being the graph of a single node and $L_1$ being the empty set, so that $Q(U_1) = \lambda/\omega$ and $Q(L_1) = 0$.
Observe that, by definition, $s_{i+1}~\in~\{t_{i+1},t_{i+1}+1\}$ and that the construction guarantees that
the values $Q(L_i)$ are at most $1$, and the values $Q(U_i)$ are at least $1$. The following simple lemma justifies the correctness of our construction.
\begin{lemma}
\label{lem:err1}
\[
\lim_{i\rightarrow \infty} {Q(U_i)}/{Q(L_i)} = 1.
\]
\end{lemma}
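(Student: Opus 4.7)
The plan is to prove the stronger statement that $D_i := Q(U_i) - Q(L_i) \to 0$. Combined with the sandwich $Q(L_i) \le 1 \le Q(U_i)$ guaranteed by construction, this immediately forces both $Q(U_i) \to 1$ and $Q(L_i) \to 1$, and hence the ratio $Q(U_i)/Q(L_i) \to 1$. The strategy is to derive a one-step geometric contraction on $D_i$ by exploiting the fact that $s_{i+1}$ and $t_{i+1}$ differ by at most one.

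First I would observe that the map
\[
\ell \,\mapsto\, \frac{(1+\omega)^b}{(1+\omega Q(U_i))^\ell (1+\omega Q(L_i))^{b-\ell}}
\]
is strictly decreasing in $\ell$, since $Q(U_i) \ge Q(L_i)$ implies $1+\omega Q(U_i) \ge 1+\omega Q(L_i)$. Hence $t_{i+1}$ is the largest integer $\ell$ for which this quantity is $\ge 1$, while $s_{i+1}$ is the smallest integer $\ell$ for which it is $\le 1$. Either these coincide (in which case the threshold is hit exactly, giving $Q(U_{i+1}) = Q(L_{i+1}) = 1$ and the conclusion is trivial), or $s_{i+1} = t_{i+1} + 1$. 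In the latter case, dividing the two defining recursions collapses to
\[
\frac{Q(U_{i+1})}{Q(L_{i+1})} \;=\; \frac{1+\omega Q(U_i)}{1+\omega Q(L_i)}.
\]

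From this identity I would pass to an estimate on $D_{i+1}$ via the algebraic manipulation
\[
D_{i+1} \;=\; Q(L_{i+1})\!\left(\frac{Q(U_{i+1})}{Q(L_{i+1})}-1\right) \;=\; \frac{Q(L_{i+1})\,\omega\,D_i}{1+\omega Q(L_i)} \;\le\; \omega D_i,
\]
where the inequality uses $Q(L_{i+1}) \le 1$ and $Q(L_i) \ge 0$. In the parameter regime relevant to Theorem~\ref{thm:main}, $\omega = (1+\delta)\ln b/b < 1$ for all sufficiently large $b$, so iterating gives $D_i \le \omega^{i-1} D_1 = \omega^{i-1}(1+\omega)^b \to 0$ geometrically fast, completing the proof.

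The main point I expect to have to verify carefully is the dichotomy $s_{i+1} \in \{t_{i+1}, t_{i+1}+1\}$ and the resulting clean cancellation in the ratio; once these are in hand, the argument reduces to the one-line contraction above. A secondary concern is that the crude bound $D_{i+1} \le \omega D_i$ fails to contract when $\omega \ge 1$; in that regime one would have to retain the factor $1+\omega Q(L_i)$ in the denominator and bootstrap self-consistently from the fact that $Q(L_i)$ eventually lies close to $1$. This refinement is, however, unnecessary for the hard-core application at hand, where $\omega \to 0$ as $b \to \infty$.
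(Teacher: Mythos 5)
Your core computation is correct: the dichotomy $s_{i+1}\in\{t_{i+1},t_{i+1}+1\}$, the cancellation $Q(U_{i+1})/Q(L_{i+1})=(1+\omega Q(U_i))/(1+\omega Q(L_i))$, and the resulting bound $D_{i+1}\le \omega D_i$ all check out, and in the regime $\omega<1$ this gives a stronger, quantitative conclusion than the lemma asks for. Be aware, though, that this is essentially the paper's Lemma~\ref{lem:err2} in disguise: with $\eps^+_i=Q(U_i)-1$ and $\eps^-_i=1-Q(L_i)$ one has $D_i=\eps^+_i+\eps^-_i$, and the paper proves exactly $\eps^+_{i+1}+\eps^-_{i+1}\le\omega(\eps^+_i+\eps^-_i)$ -- but it does so as a separate statement, explicitly restricted to $\omega<1$, and does \emph{not} use it to prove Lemma~\ref{lem:err1}.

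The gap is that Lemma~\ref{lem:err1} is stated, proved, and used in the paper with no restriction on $\omega$, whereas your argument only contracts when $\omega<1$, and your sketched repair for $\omega\ge 1$ (``bootstrap from the fact that $Q(L_i)$ eventually lies close to $1$'') is circular as stated: that $Q(L_i)$ approaches $1$ is precisely the conclusion, and retaining the factor $1+\omega Q(L_i)$ in the denominator only helps if you first establish an a priori lower bound on $Q(L_i)$, which you do not. The paper's proof avoids this entirely by working with the ratio: from the same identity the ratio $Q(U_i)/Q(L_i)$ is non-increasing, and if its limit were some $q>1$, then the exact formula for the difference of consecutive ratios together with $Q(U_{i-1})-Q(L_{i-1})\ge (q-1)Q(L_{i-1})$ and $Q(L_{i-1})\le 1$ shows each step decreases the ratio by at least the constant $(q-1)/(1+\omega)$, a contradiction -- valid for every $\omega$ with no lower bound on $Q(L_i)$ needed. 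The unrestricted case is not vacuous here: Theorem~\ref{thm:main}.\ref{thm:above-lower} is claimed for all $b\ge b_0(\delta)$, and since $b_0(\delta)$ stays bounded as $\delta\to\infty$, one can have $\omega=(1+\delta)\ln b/b>1$ (e.g.\ $b=2$, $\delta$ large); in that regime the paper relies on Lemma~\ref{lem:err1} itself for the existence of $\hp(\lambda,b)$, with only the explicit estimate of Corollary~\ref{cor:err1} confined to $\omega<1$. So as written your proposal establishes the lemma only under an additional hypothesis, and the missing case requires a different argument such as the paper's.
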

\begin{proof}
It is easy to see that either $t_i=s_i$ (meaning that $Q(L_i)=Q(U_i)=1$), or $t_i=s_i-1$, which implies that
\[
\frac{Q(U_i)}{Q(L_i)} = \frac{1+\omega Q(U_{i-1})}{1+\omega Q(L_{i-1})} < \frac{Q(U_{i-1})}{Q(L_{i-1})}.
\]
So the ratio is shrinking. Suppose the limit is not $1$ but some value $q>1$. Then,
\[
\frac{Q(U_{i-1})}{Q(L_{i-1})}-\frac{Q(U_{i})}{Q(L_{i})} = \frac{Q(U_{i-1})-Q(L_{i-1})}{(1+\omega Q(L_{i-1}))Q(L_{i-1})}.
\]
Since $Q(U_i)/Q(L_i) > q$ and $Q(L_i)\le 1$, we have
\[
\frac{Q(U_{i-1})-Q(L_{i-1})}{(1+\omega Q(L_{i-1}))Q(L_{i-1})}\ge
\frac{(q-1)Q(L_{i-1})}{Q(L_{i-1})(1+\omega)}=\frac{q-1}{1+\omega},
\]
which is a constant.

Therefore as long as $q>1$, we show that the difference between the ratios for each step $i$ is at least some constant which is impossible. Hence the assumption is false.
\end{proof}

By this lemma, it is easy to check that if we let $T^*_i$ to be equal to either
$U_i$ or $L_i$,  then $Q(T^*_i)\rightarrow 1$.
Indeed, we can show that the additive error decreases exponentially fast. The following lemma indicates that this is the case for $\omega<1$ (although a similar result holds for any $\omega$).
\begin{lemma}
\label{lem:err2}
Let $\eps^+_i$ be the value of $Q(U_i)-1$ and let $\eps^-_i$ be the value of $1-Q(L_i)$, then
\[
\eps^+_{i+1}+\eps^-_{i+1} \le \omega(\eps^+_i + \eps^-_i).
\]
\end{lemma}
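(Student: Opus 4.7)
}
My plan is to express the error quantity $\eps^+_{i+1}+\eps^-_{i+1}$ simply as the difference $Q(U_{i+1})-Q(L_{i+1})$, since $\eps^+_{i+1}+\eps^-_{i+1}=(Q(U_{i+1})-1)+(1-Q(L_{i+1}))=Q(U_{i+1})-Q(L_{i+1})$. The same identity gives $\eps^+_i+\eps^-_i=Q(U_i)-Q(L_i)$. So the whole task is to show $Q(U_{i+1})-Q(L_{i+1})\le \omega\bigl(Q(U_i)-Q(L_i)\bigr)$.

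I would split on the two possibilities for $s_{i+1}$ allowed by the construction. If $s_{i+1}=t_{i+1}$, then the two recurrences for $Q(U_{i+1})$ and $Q(L_{i+1})$ coincide, and the simultaneous requirements $Q(U_{i+1})\ge 1\ge Q(L_{i+1})$ force $Q(U_{i+1})=Q(L_{i+1})=1$. Hence $\eps^+_{i+1}=\eps^-_{i+1}=0$ and the bound holds trivially. In the remaining case $s_{i+1}=t_{i+1}+1$, write $k=t_{i+1}$ and observe that the recurrences differ by exactly one factor, so
\[
\frac{Q(U_{i+1})}{Q(L_{i+1})}=\frac{1+\omega Q(U_i)}{1+\omega Q(L_i)}.
\]
Rearranging,
\[
Q(U_{i+1})-Q(L_{i+1})=Q(L_{i+1})\cdot\frac{\omega\bigl(Q(U_i)-Q(L_i)\bigr)}{1+\omega Q(L_i)}.
\]

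To finish, I would use the two structural bounds built into the construction: $Q(L_{i+1})\le 1$ (by definition of $s_{i+1}$) and $1+\omega Q(L_i)\ge 1$ (since $Q(L_i)\ge 0$). Multiplying these into the display above gives
\[
Q(U_{i+1})-Q(L_{i+1})\le \omega\bigl(Q(U_i)-Q(L_i)\bigr),
\]
which translates back to $\eps^+_{i+1}+\eps^-_{i+1}\le \omega(\eps^+_i+\eps^-_i)$. There is no real obstacle here beyond correctly identifying which case of the construction one is in; the only mild subtlety is recognizing that in the ``balanced'' case $s_{i+1}=t_{i+1}$ one gets equality of the two $Q$-values (and both equal to $1$), so the lemma is not vacuous but strengthens to zero error in one step.
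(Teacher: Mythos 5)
Your proof is correct and is essentially the paper's own argument in different clothing: the paper also identifies $\eps^+_{i+1}+\eps^-_{i+1}$ with the difference of the two consecutive $Q$-values (written in the normalized form $1/\bigl[(1+\tfrac{\omega}{1+\omega}\eps^+_i)^{k}(1-\tfrac{\omega}{1+\omega}\eps^-_i)^{b-k}\bigr]$), computes the single-factor difference, and then bounds it exactly as you do, using $Q(L_{i+1})\le 1$ and $Q(L_i)\ge 0$. Your explicit handling of the balanced case $s_{i+1}=t_{i+1}$ is a minor added clarification, not a different method.
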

\begin{proof}
We can rewrite the expression
\[
(1+\omega)^b / (1+\omega Q(U_i))^j (1+\omega Q(L_i))^{b-j}
\]
as
\[
\frac{1}{(1+\frac{\omega}{1+\omega}\eps^+_i)^j(1-\frac{\omega}{1+\omega}\eps^-_i)^{b-j}}.
\]
Now, let $k$ be the biggest index over $[b]$ such that the denominator of the previous expression is less than $1$ (thus, $k+1$ will be the least index such that the denominator is greater than $1$). Then,

\begin{eqnarray*}
\eps^+_{i+1}+\eps^-_{i+1} & = & \frac{1}{(1+\frac{\omega}{1+\omega}\eps^+_i)^k(1-\frac{\omega}{1+\omega}\eps^-_i)^{b-k}}-
\frac{1}{(1+\frac{\omega}{1+\omega}\eps^+_i)^{k+1}(1-\frac{\omega}{1+\omega}\eps^-_i)^{b-k-1}}\\
&=&
\frac{\frac{\omega}{1+\omega}(\eps^+_i+\eps^-_i)}{(1+\frac{\omega}{1+\omega}\eps^+_i)^{k+1}(1-\frac{\omega}{1+\omega}\eps^-_i)^{b-k}}\\
&\le&
\frac{\frac{\omega}{1+\omega}(\eps^+_i+\eps^-_i)}{1-\frac{\omega}{1+\omega}\eps^-_i}~~~~\textrm{(by the property of $k+1$)}\\
&\le&
\omega(\eps^+_i + \eps^-_i).
\end{eqnarray*}
\end{proof}

Coming back to the original tree-boundary notation, let $\BDU{h}$ be the boundary corresponding to the trimming of the tree $U_h$ and let $\BDL{h}$ be the boundary corresponding to the  trimming of the tree $L_h$.
By our construction, for any vertex $v$ on the tree of height $h$,
the measure from $\muhbd{h}{\BDU{h}}$ (or $\muhbd{h}{\BDL{h}}$) projected onto the space of the independent sets of the subtree rooted at $v$ with the boundary condition corresponding
to the correct part of $\BD$ and the parent of $v$ being unoccupied is either $\muhbd{i}{\BDU{i}}$ or $\muhbd{i}{\BDL{i}}$, where $i$ is the distance of $v$ away from the leaves on $T_h$.
Conditioning on the parent of $v$ being unoccupied, in the broadcast process
defined in the Introduction, we would occupy $v$ with probability $\omega/(1+\omega)$.
Therefore, in the above construction, the probability $v$ is occupied (or
rather unoccupied) is close to the desired probability, and the error will
decay exponentially fast with the distance from the leaves.
This is formally stated in the following corollary of Lemma \ref{lem:err2}.

\begin{corollary}
\label{cor:err1}
Given any $\omega<1$ and the complete tree of height $i$, for $\BD$ equal to~$\BDU{i}$ or~$\BDL{i}$ inductively constructed above, we have
\[
\left\vert \muhbd{i}{\BD}(\sigma(r)=0)-\frac{1}{1+\omega}\right\vert \leq \omega^{i-1}\lambda/b.
\]
\end{corollary}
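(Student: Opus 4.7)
The plan is to reduce the claim about root marginals to a claim about the $Q$-value at the root, and then apply Lemma \ref{lem:err2} iteratively. The first step is the identity $\muhbd{i}{\BD}(\sigma(r)=0) = 1/(1+\omega Q(T^*_i))$, where $T^*_i \in \{U_i, L_i\}$ is the trimmed tree corresponding to $\BD$. This follows directly from the definition of the $Q$-value, and rearranging gives
\[
\muhbd{i}{\BD}(\sigma(r)=0) - \frac{1}{1+\omega} = \frac{\omega(1-Q(T^*_i))}{(1+\omega)(1+\omega Q(T^*_i))},
\]
so the task reduces to bounding $|1-Q(T^*_i)|$ while controlling the denominator from below.

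For the second step, note that by construction $Q(L_i) \le 1 \le Q(U_i)$, so $|1-Q(T^*_i)| \le \eps^+_i + \eps^-_i$ in either case. Iterating Lemma \ref{lem:err2} starting from the base case $Q(U_1) = \lambda/\omega$, $Q(L_1) = 0$ (giving $\eps^+_1 + \eps^-_1 = \lambda/\omega$) yields
\[
\eps^+_i + \eps^-_i \le \omega^{i-1}(\eps^+_1 + \eps^-_1) = \omega^{i-2}\lambda.
\]
Plugging this back into the identity above produces the advertised geometric decay in $i$.

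The main point requiring care is the constant factor $1/b$ in the stated bound. The exponential-in-$i$ contraction is immediate from Lemma \ref{lem:err2}; the subtlety lies in turning the naive denominator $(1+\omega)(1+\omega Q(T^*_i)) \ge 1+\omega$ into the sharper factor $b$. I would do this by refining the first step of the iteration, using the explicit expressions $Q(L_2) = (1+\lambda)^{-\eta_2}$ and $Q(U_2) = (1+\lambda)^{1-\eta_2}$ for some $\eta_2 \in [0,1)$, so that $\eps^+_2 + \eps^-_2 \le \lambda(1+\lambda)^{-\eta_2}$, and then invoking the identity $\lambda = \omega(1+\omega)^b$ (together with the hypothesis $\omega < 1$) to rewrite the final bound in terms of $b$. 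The rest is routine arithmetic using the $\omega$-contraction of Lemma \ref{lem:err2}.
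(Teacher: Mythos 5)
Your reduction is exactly the intended one: the identity $\muhbd{i}{\BD}(\sigma(r)=0)=1/(1+\omega Q(T^*_i))$, the resulting formula for the deviation from $1/(1+\omega)$, the observation $|1-Q(T^*_i)|\le \eps^+_i+\eps^-_i$, and the iteration of Lemma \ref{lem:err2} from the base case $\eps^+_1+\eps^-_1=\lambda/\omega$ are all correct, and this is the argument the corollary is meant to summarize. Up to this point your proof delivers a bound of the form $\omega^{i-1}\lambda/\bigl((1+\omega)(1+\omega Q(T^*_i))\bigr)\le \omega^{i-1}\lambda$, i.e.\ the geometric decay in $i$ with a prefactor depending only on $\lambda$.

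The gap is in the final step, where you try to manufacture the extra factor $1/b$. Your level-two formulas are right: with $\theta=b\ln(1+\omega)/\ln(1+\lambda)$ one has $Q(U_2)=(1+\lambda)^{\{\theta\}}$ and $Q(L_2)=(1+\lambda)^{\{\theta\}-1}$, so $\eps^+_2+\eps^-_2=\lambda(1+\lambda)^{-\eta_2}$ with $\eta_2=1-\{\theta\}$. But $\eta_2$ is a rounding quantity over which the construction gives no control: it can be arbitrarily close to $0$, in which case $\eps^+_2+\eps^-_2$ is of order $\lambda$ and the refinement gains nothing, let alone a factor $1/b$ (which would require $\eta_2\ge \ln b/\ln(1+\lambda)$). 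So ``the rest is routine arithmetic'' hides a step that fails. In fact the constant in the corollary cannot be recovered by this route at all: already at $i=1$ the left-hand side equals $(\lambda-\omega)/\bigl((1+\lambda)(1+\omega)\bigr)$, which for $\omega=\ln b/b$ (so $\lambda\approx\ln b<b$) is of order $1$ while $\lambda/b\to 0$; and at $i=2$, for fractional parts near $1$, the deviation for $\BDU{2}$ is of order $\omega\lambda$, again exceeding $\omega\lambda/b$ by a factor $\approx b$. The honest conclusion of the argument is the bound $\omega^{i-1}\lambda$ (equivalently, exponential decay with a prefactor depending only on $\lambda$ and $b$), which is all that is used later: the corollary only feeds into the definition of the constant $\hp(\lambda,b)$, for which any bound of the form $C(\lambda,b)\,\omega^{i}$ suffices. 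You should state and prove that weaker bound rather than chase the $1/b$.
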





Throughout the rest of this section it is assumed that we are dealing with the boundary conditions $\{\BDU{h}\}_{h\in \mathbf{N}}$ and $\{\BDL{h}\}_{h\in \mathbf{N}}$ constructed above.
We will then show that for every $\omega = (1+\delta)\ln b/b$ under these two boundary conditions, the Glauber dynamics on the hard-core model slows down, whenever $\delta > 0$.
As we know from Corollary \ref{cor:err1}, the error of the marginal goes down very fast, so that roughly we can think of the
marginal distribution of the configurations on the tree from the root to
the vertices a few levels
 above the leaves as being close to the broadcasting measure.
 In fact, by following the same proof outline
 as we did in Section \ref{sec:lower-bound-broadcasting},
 we are able to prove the same lower bound in the hard-core model for these boundaries.
 To do that we need a slight generalization of the reconstruction algorithm
 and extensions of the corresponding lemmas used in that section to handle
 the errors in the marginal probabilities.

To generalize the notion of a reconstruction algorithm to the case of a
boundary condition we need to add an extra parameter $\hp$ depending only on $\omega$ and $b$.  We will
essentially ignore the bottom $\hp$ levels in the analysis, and
we will use that for the top $h-\hp$ levels the marginal probabilities are close
to those on the broadcasting tree.
\juan{do we need to do all this new definitions? Can we just say that we sample the coloring at level $h-\hp$ and run the reconstruction algorithm. Thus the algorithm is now "random" (we just introduced some randomness) and then we do averaging (with respect to the new randomness) in the sensitivity}
We define a \emph{reconstruction algorithm
with a parameter $\hp$} for the tree $T_h$ with boundary condition $\BD$ as a function $A_\hp:\Omega(L_{h-\hp})\rightarrow\{0,1\}$.
The algorithm $A_\hp$ takes the configurations of the vertices at height $h-\hp$ as the input
and tries to compute the configuration at the root.
For any $\sigma \in\Omega(T_{h,\BD})$,
the sensitivity is defined as:
$
S_{\hp,A}(\sigma)  = \frac{1}{n}\#\left\{v \in L_{h-\hp}: A_\hp(\sigma^v_{h-\hp})  \neq A_\hp(\sigma_{h-\hp})  \right\}
$.
The average sensitivity of the algorithm at height $h-\hp$ with respect to the boundary $\BD$ is defined as:
$\bar{S}_{\hp,A}^{\BD} = \ExpSub{\sigma}{S_{\hp,A}(\sigma)\indicator{A_{\hp}(\sigma_{h-\hp}) = 1}}
$. And the effectiveness is defined as:
\[
r_{\hp,A}^{\BD} = \min_{x\in \{0,1\}} [\mu_{h,\BD}(A_\hp(\sigma_{h-\hp}) = x \textrm{ and } \sigma(r) = x) - \mu_{h,\BD}(A_\hp(\sigma_{h-\hp}) = x)\mu_{h,\BD}(\sigma(r) = x)].
\]
We can show the analog of Theorem \ref{thm:sensitivity2} in this setting.
\begin{theorem}
\label{thm:sensitivity2}
Suppose that $A_\hp$ is an effective reconstruction algorithm.
Then, it is the case that the spectral gap $\gap$
of the Glauber dynamics for the hard-core model on the tree of height $h$ with boundary condition $\BD$,
 satisfies $\gap = O(\bar{S}_{\hp,A}^{\BD})$, and hence the relaxation time of this Glauber dynamics satisfies $\Trel = \Omega(1/\bar{S}_{\hp,A}^{\BD})$.
\end{theorem}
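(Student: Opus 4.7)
The plan is to mimic the proof of Theorem~\ref{thm:sensitivity} almost verbatim, taking advantage of the fact that the only conceptual change is which vertices can possibly flip the output of the reconstruction algorithm. Throughout I abbreviate $\mu := \muhbd{h}{\BD}$ and $U := \{\sigma \in \Omega_\BD : A_\hp(\sigma_{h-\hp}) = 1\}$. I will upper-bound the conductance $\Phi_U$ and then invoke $\Trel = \Omega(1/\Phi_U)$.

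To bound $\Phi_U$, the numerator is
\[
\sum_{\sigma \in U} \mu(\sigma) \sum_{\tau \notin U} P(\sigma,\tau).
\]
The key observation is that for the Glauber step $\sigma \to \tau$, if $\tau$ differs from $\sigma$ at a vertex $v \notin L_{h-\hp}$, then $\sigma_{h-\hp} = \tau_{h-\hp}$ and hence $A_\hp(\sigma_{h-\hp}) = A_\hp(\tau_{h-\hp})$, so the transition stays inside $U$. Therefore only updates at vertices $v \in L_{h-\hp}$ can contribute, and for any such vertex $P(\sigma,\sigma^v) \le 1/n$. This yields
\[
\sum_{\sigma \in U} \mu(\sigma) \sum_{\tau \notin U} P(\sigma,\tau) \;\le\; \sum_{\sigma \in U} \mu(\sigma) \cdot \frac{\#\{v \in L_{h-\hp} : A_\hp(\sigma^v_{h-\hp}) \neq A_\hp(\sigma_{h-\hp})\}}{n} \;=\; \bar{S}_{\hp,A}^{\BD},
\]
where the last equality uses the indicator built into the definition of $\bar{S}_{\hp,A}^{\BD}$, which restricts the sum to $\sigma \in U$.

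For the denominator, effectiveness gives $\liminf_{h \to \infty} r_{\hp,A}^{\BD} = c_0 > 0$, so for all large enough $h$,
\[
\mu(U) \;\geq\; \mu\bigl(A_\hp(\sigma_{h-\hp}) = \sigma(r) = 1\bigr) \;\geq\; r_{\hp,A}^{\BD}, \qquad 1-\mu(U) \;\geq\; \mu\bigl(A_\hp(\sigma_{h-\hp}) = \sigma(r) = 0\bigr) \;\geq\; r_{\hp,A}^{\BD},
\]
exactly as in the proof of Theorem~\ref{thm:sensitivity}. Combining the two bounds, $\Phi_U \leq (r_{\hp,A}^{\BD})^{-2}\,\bar{S}_{\hp,A}^{\BD} = O(\bar{S}_{\hp,A}^{\BD})$, and therefore $\gap \leq 2\Phi_U = O(\bar{S}_{\hp,A}^{\BD})$ and $\Trel = \Omega(1/\bar{S}_{\hp,A}^{\BD})$.

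There is no serious obstacle here: the proof is a direct generalization of Theorem~\ref{thm:sensitivity}, and the only non-automatic step is the observation that Glauber moves at vertices outside $L_{h-\hp}$ cannot carry $\sigma$ across the boundary of $U$, which is what justifies replacing the full leaf set of Theorem~\ref{thm:sensitivity} by the truncated set $L_{h-\hp}$ in the sensitivity bound. If anything deserves care, it is only the bookkeeping that the boundary condition $\BD$ fixes the spins on the actual leaves of $T_h$ so the Glauber chain never updates them, making the reduction to updates at $L_{h-\hp}$ the natural and only relevant set of ``active'' vertices for $A_\hp$.
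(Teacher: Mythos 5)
Your proof is correct and is essentially the paper's own argument: the paper simply notes that Theorem~\ref{thm:sensitivity2} follows by the same approach as Theorem~\ref{thm:sensitivity}, i.e.\ bounding the conductance of $U=\{\sigma: A_\hp(\sigma_{h-\hp})=1\}$ using that only updates at vertices of $L_{h-\hp}$ can change $A_\hp$, together with the effectiveness bound on $\mu(U)(1-\mu(U))$.
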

To bound the average sensitivity for the boundary conditions
 $\BDU{h}$ and $\BDL{h}$ constructed above,
 we again use the same BW\ algorithm as we analyzed for the broadcasting tree.
 As in Eq.\eqref{eq:s31} and \eqref{eq:s32},
 it is again enough to bound the probability \linji{I added this new notation, otherwise it is too long and looks messy.}
 \[
 P_{\hp,\A}^\BD := \muhbd{h}{\BD_h}(\A_{\hp}(\sigma_{h-\hp})~=~1 \textrm{ and } \A_{\hp}(\sigma^{\hv}_{h-\hp})~=~0)
 \]
for a fixed vertex $\hv$ at a distance $\hp$ from the leaves, although in this case, this probability will not be the same for all $\hv$. Let the path $\Pa$ from $\hv$ to the root $r$ be $u_{0} = \hv \leadsto u_{1}\leadsto\cdots\leadsto u_{h-\hp} = r$,
and for each $i > 0$ and $j\in\{1,\dots,b\}$,
let $w_{i,j}$ be defined similarly as in Section \ref{sec:lower-bound-broadcasting}. Further, let $\BD_{i,j}$ be the boundary condition $\BD_h$ restricted to the subtree $T_{w_{i,j}}$ of $T_h$ rooted at the vertex $w_{i,j}$. These subtrees are of height $i+\ell-1$ for each $i$.
Note that, by our construction of the boundary conditions, for each fixed $i$, $\BD_{i,j} = \BDU{i+\hp-1}$ or $\BD_{i,j} = \BDL{i+\hp-1}$.
The probability  $P_{\hp,\A}^\BD$ can be calculated by the following lemma,
which is the analog of Lemma \ref{lem:nonuni1} for the broadcasting tree.
\begin{lemma}
\label{lem:uni1}
\[
\ P_{\hp,\A}^\BD \le \ExpSub{\sigma}{\prod\limits_{i>0:\sigma(u_i)=0} \prod_{j=2}^{b}
\ProbSub{\eta\sim\muhbd{i+\hp-1}{\BD_{i,j}}}{A_{\hp}(\eta) = 0}
},
\]
where the expectation is over the measure $\muhbd{h}{\BD_h}$, and for each $i,j$, $\eta$ is a random configuration on the subtree rooted at $w_{i,j}$ with the probability measure $\muhbd{i+\hp-1}{\BD_{i,j}}$.
\end{lemma}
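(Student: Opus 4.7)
The plan is to follow the proof of Lemma~\ref{lem:nonuni1} from the broadcasting setting, making only the adjustments needed to accommodate the truncated algorithm $A_\hp$ and the nonuniform boundaries $\{\BD_{i,j}\}$. The first step is to identify the analogue of the ``cascade'' condition used there. Let $R_\sigma(w)$ denote the output of the BW algorithm $A_\hp$ applied to the subtree rooted at $w$, i.e.\ the BW recursion run from the level at distance $\hp$ above the original leaves upward to~$w$. The same case analysis on whether $R_\sigma(u_i)=0$ or $R_\sigma(u_i)=1$ as in Lemma~\ref{lem:nonuni1} shows that for the event
$\{A_\hp(\sigma_{h-\hp})=1 \text{ and } A_\hp(\sigma_{h-\hp}^{\hv})=0\}$
to occur, the flip at $\hv=u_0$ must propagate all the way up the path, and this forces $R_\sigma(w_{i,j})=0$ for every $i\ge 1$ and every $j\in\{2,\dots,b\}$.

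Next, we expose the configuration of $\sigma$ on the path $\Pa$. By the Markov property of the hard-core measure on a tree, once $\sigma|_\Pa$ is fixed, the configurations on the off-path subtrees rooted at $\{w_{i,j}\}_{j\ge 2}$ are mutually independent, and in particular the events $\{R_\sigma(w_{i,j})=0\}$ are conditionally independent across $i$ and $j$. We bound each conditional probability according to the exposed spin at $u_i$: if $\sigma(u_i)=0$, then $u_i$ imposes no restriction on the subtree rooted at $w_{i,j}$, so the conditional law on that subtree is exactly $\muhbd{i+\hp-1}{\BD_{i,j}}$ and the conditional probability of $R_\sigma(w_{i,j})=0$ equals $\ProbSub{\eta\sim\muhbd{i+\hp-1}{\BD_{i,j}}}{A_\hp(\eta)=0}$; if $\sigma(u_i)=1$, we use the same ``safe'' trivial upper bound of~$1$ as in Lemma~\ref{lem:nonuni1}. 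Multiplying over $i$ and $j$, summing over path configurations $\mathbf{x}$ weighted by $\muhbd{h}{\BD_h}(\sigma|_\Pa=\mathbf{x})$, and recognizing the sum as an expectation over $\sigma\sim\muhbd{h}{\BD_h}$ produces exactly the stated bound on $P_{\hp,\A}^\BD$.

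The hard part, which is really a bookkeeping matter rather than a conceptual one, is keeping track of how the subtree heights and boundary conditions vary along the path: the off-path subtrees rooted at $w_{i,j}$ have height $i+\hp-1$ rather than $i-1$ as in Lemma~\ref{lem:nonuni1}, and each carries its own boundary $\BD_{i,j}\in\{\BDU{i+\hp-1},\BDL{i+\hp-1}\}$ inherited from our inductive construction. Neither complication disturbs the conditional independence across subtrees nor the interpretation of ``parent unoccupied means no constraint on the child subtree,'' so the conceptual core of the proof (cascade characterization together with Markovian decoupling along the path) is identical to the broadcasting argument; only the labels on the underlying hard-core measures change from $\nu_{i-1}$ to $\muhbd{i+\hp-1}{\BD_{i,j}}$.
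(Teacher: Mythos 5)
Your proposal is correct and follows essentially the same argument the paper intends: the paper explicitly states that Lemma~\ref{lem:uni1} is proved by the same approach as Lemma~\ref{lem:nonuni1}, namely the cascade condition forcing $R_\sigma(w_{i,j})=0$ off the path, exposing the path spins, using the Markov property for conditional independence of the off-path subtrees, and bounding the conditional probability by $\ProbSub{\eta\sim\muhbd{i+\hp-1}{\BD_{i,j}}}{A_\hp(\eta)=0}$ when $\sigma(u_i)=0$ and by $1$ when $\sigma(u_i)=1$. Your bookkeeping of the subtree heights $i+\hp-1$ and of the boundaries $\BD_{i,j}\in\{\BDU{i+\hp-1},\BDL{i+\hp-1}\}$ matches the paper's setup, so no gap remains.
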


The proofs of Theorem \ref{thm:sensitivity2} and Lemma \ref{lem:uni1} use the
same proof approach as for Theorem \ref{thm:sensitivity} and Lemma \ref{lem:nonuni1} respectively. However, to bound
$\ProbSub{\eta}{A_{\hp}(\eta) = 0}$
for every $i>0$, in spite of going along the lines of Lemma \ref{lem:nonuni2}, the proof does require extra care to deal with the errors in the marginal probabilities which were bounded in Corollary \ref{cor:err1}.
In particular, we will establish the following lemma to upper bound $\ProbSub{\eta\sim\muhbd{i+\hp-1}{\BD_{i,j}}}{ A_{\hp}(\eta) = 0}$ for each $i>0$. 
Here and throughout the text, we define $\hp(\lambda,b)$ to be the minimum $\hp$ such that for all $i\geq \hp$,
\[
\ProbSub{\eta\sim\muhbd{i}{\BDL{i}}}{\eta(r) = 0}\leq \frac{1}{1+\omega}\exp\left(\frac{1.01(\omega b)^2}{\lambda}\right).
\]
The existence of such constant $\hp(\lambda,b)$ is guaranteed by Lemma \ref{lem:err1}, also from Corollary \ref{cor:err1} we can deduce a explicit value for $\hp(\lambda,b)$, provided that $\omega <1$. 
\begin{lemma}
\label{lem:uni2}
Given any $\delta>0$, and $i \geq \hp(\lambda,b) = \hp$, then both $\ProbSub{\eta\sim\muhbd{i}{\BDU{i}}}{A_{\hp}(\eta) = 0}$ and $\ProbSub{\eta\sim\muhbd{i}{\BDL{i}}}{A_{\hp}(\eta) = 0}$ are upper bounded by $\frac{1.01^{1/b}  }{1+\omega}$ for any $b\geq \BW$.
\end{lemma}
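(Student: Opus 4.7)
My plan is to adapt the double induction from the proof of Lemma~\ref{lem:nonuni2} to the hard-core setting with the boundaries $\BDU{i}$ and $\BDL{i}$, using the fact that by construction these boundaries perturb the root marginals only by a factor controlled by $\ell(\lambda,b)$. Set $f_i^j = \ProbSub{\eta\sim\muhbd{i}{\BD^j_i}}{A_\ell(\eta) = 0}$ for $j\in\{1,2\}$ and $i\geq \ell$. I aim to show by induction on $i$ that $f_i^j \leq 1.01^{1/b}/(1+\omega)$.

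For the base case $i=\ell$, the algorithm $A_\ell$ reads only the root, so $f_\ell^j = \ProbSub{\eta\sim\muhbd{\ell}{\BD^j_\ell}}{\eta(r)=0}$. Because $Q(U_\ell)\geq 1$ by the construction in Section~\ref{sec:lower-bound-hardcore}, we get $f_\ell^1 \leq 1/(1+\omega)$ for free. For $j=2$, the defining property of $\ell(\lambda,b)$ gives $f_\ell^2 \leq \exp(1.01(\omega b)^2/\lambda)/(1+\omega)$, which is at most $1.01^{1/2}/(1+\omega)$ by Lemma~\ref{lem:b0h}.

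For the inductive step I would exploit the recursive construction of $L_i$ and $U_i$ (their roots have $b$ children, split into $s_i$ or $t_i$ copies of $U_{i-1}$ and the remainder copies of $L_{i-1}$), together with the Markov property of the hard-core measure, to derive a two-step recursion for $f_{i+1}^j$ in terms of $f_i^{j'}$ and $f_{i-1}^{j'}$, mirroring the recursion in Lemma~\ref{lem:nonuni2}. The only difference is that the clean broadcasting marginals $1/(1+\omega)$ and $\omega/(1+\omega)$ are replaced by the perturbed Gibbs marginals $1/(1+\omega Q)$ and $\omega Q/(1+\omega Q)$ for the appropriate $Q$-value, each within a factor $\exp(\pm 1.01(\omega b)^2/\lambda)$ of its broadcasting counterpart by the choice of $\ell$. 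Pushing the same Bernoulli inequality $(1-u)^b\geq 1-ub$ and $(1+u)^b\leq e^{ub}$ estimates as in Lemma~\ref{lem:nonuni2} through the perturbed recursion should fold the boundary deviation into the exact same $\exp(1.01(\omega b)^2/\lambda)$ factor already present there, yielding
\[
(f_{i+1}^j)^b \;\leq\; \frac{\exp\!\bigl(2\cdot 1.01(\omega b)^2/\lambda\bigr)}{(1+\omega)^b} \;\leq\; \frac{1.01}{(1+\omega)^b}
\]
by Lemma~\ref{lem:b0h}, which closes the induction with the desired bound $f_{i+1}^j\leq 1.01^{1/b}/(1+\omega)$.

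The main obstacle is the careful bookkeeping of the perturbations. Unlike the broadcasting measure, which is exactly self-similar with clean marginals $\omega/(1+\omega)$ at every level, the measures $\muhbd{i}{\BDU{i}}$ and $\muhbd{i}{\BDL{i}}$ have a mixed two-type recursive structure and only approximate self-similarity, with the deviations controlled via Corollary~\ref{cor:err1} by the choice of $\ell(\lambda,b)$. The definition of $\ell$ is tuned precisely so that, at every height $\geq\ell$, these deviations can be absorbed into the single $\exp(1.01(\omega b)^2/\lambda)$ factor that governs the broadcasting proof; doing the accounting so that the resulting $\exp(2\cdot 1.01(\omega b)^2/\lambda)\leq 1.01$ budget is not exceeded at any step is the delicate part of the argument.
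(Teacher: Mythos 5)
Your proposal is essentially the paper's own argument: the same two-step recurrences for the two probabilities $\ProbSub{\eta\sim\muhbd{i}{\BDU{i}}}{A_{\hp}(\eta)=0}$ and $\ProbSub{\eta\sim\muhbd{i}{\BDL{i}}}{A_{\hp}(\eta)=0}$ driven by the $t_i,s_i$ split into copies of $U_{i-1}$ and $L_{i-1}$, the same base case at height $\hp$ where the BW output is just the root spin, the same absorption of the boundary-induced deviation of the root marginals (controlled by the definition of $\hp(\lambda,b)$ and Corollary~\ref{cor:err1}) into one extra $\exp\!\left(1.01(\omega b)^2/\lambda\right)$ factor on top of the Lemma~\ref{lem:nonuni2} estimates, and the same closing step via $\exp\!\left(2(1.01)(\omega b)^2/\lambda\right)\le 1.01$ from Lemma~\ref{lem:b0h}. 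The one small blemish---your base case as stated only yields $1.01^{1/2}/(1+\omega)$ rather than the claimed $1.01^{1/b}/(1+\omega)$---is present in the paper's own write-up of the base case as well, so it reflects no difference in approach.
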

And also it is not hard to show that the BW algorithm under this setting is effective. 
\begin{proposition}
\label{prop:eff2}
The BW reconstruction algorithm is effective to recover the configuration at the root
from the configurations at distance $\hp(\lambda,b)$ from the leaves.
\end{proposition}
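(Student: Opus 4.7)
The plan is to transport the proof that the BW algorithm is effective on the broadcasting tree (established in \cite{BW} and essentially recoverable from the recurrence in Lemma~\ref{lem:nonuni2}) into the present setting with boundary conditions $\BDU{h}$ or $\BDL{h}$. The two ingredients that make this transport possible are: (i) Lemma~\ref{lem:uni2}, which provides the same uniform upper bound $(1.01)^{1/b}/(1+\omega)$ on the probability that $A_\hp$ outputs $0$ as Lemma~\ref{lem:nonuni2} does in the broadcasting case; and (ii) Corollary~\ref{cor:err1}, which guarantees that the marginal, at any internal vertex, of being occupied given its parent is unoccupied deviates from $\omega/(1+\omega)$ by at most $\omega^{i-1}\lambda/b$, an error that decays geometrically in the depth from the leaves.

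Concretely, for each height $i\ge\hp$ and each $\BD\in\{\BDU{i},\BDL{i}\}$, I would track the conditional quantities
\[
\beta_i^{\BD}:=\ProbSub{\eta\sim\muhbd{i}{\BD}}{A_\hp(\eta_{i-\hp})=1\,\big|\,\eta(r)=1},\qquad
\gamma_i^{\BD}:=\ProbSub{\eta\sim\muhbd{i}{\BD}}{A_\hp(\eta_{i-\hp})=1\,\big|\,\eta(r)=0}.
\]
At the base $i=\hp$ we have $\beta_\hp^{\BD}=1$ and $\gamma_\hp^{\BD}=0$, since $A_\hp$ evaluated on the singleton configuration at level $0$ of a height-$\hp$ subtree simply returns the root's state. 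By the Markov property and the explicit construction of $\BDU{},\BDL{}$ from Section~\ref{sec:lower-bound-hardcore}, each of $\beta_{i+1}^{\BD}$ and $\gamma_{i+1}^{\BD}$ can be written as a product over the $b$ children of an expression built from $\beta_i^{\BD'}, \gamma_i^{\BD'}$ and from the conditional occupation probability $p_i^{\BD}$ of the children given the root is unoccupied, with $\BD'$ running through the children's inherited boundaries. By Corollary~\ref{cor:err1}, $p_i^{\BD}$ differs from $\omega/(1+\omega)$ by at most $\omega^{i-1}\lambda/b$, so the recursion is a geometrically small perturbation of the exact broadcasting recursion.

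Combining this recursion with the a priori unconditional bound from Lemma~\ref{lem:uni2}, the same manipulation that produces the inductive step in Lemma~\ref{lem:nonuni2} shows that the gap $\beta_i^{\BD}-\gamma_i^{\BD}$ stays bounded below by a positive constant $c_0=c_0(\omega,b)$ for every $i\ge\hp$. Unwinding definitions, this separation of conditional distributions is exactly equivalent, after multiplication by the marginals $\muhbd{h}{\BD}(\sigma(r)=1)$ and $\muhbd{h}{\BD}(\sigma(r)=0)$ which are themselves bounded away from $0$ and $1$ by Corollary~\ref{cor:err1}, to a uniform positive lower bound on both terms appearing in the definition of $r_{\hp,A}^{\BD}$; hence $\liminf_{h\to\infty} r_{\hp,A}^{\BD}>0$, which is the effectiveness statement.

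The main technical obstacle is ensuring that the deviation of $p_i^{\BD}$ from $\omega/(1+\omega)$ does not, over the iteration from level $\hp$ up to level $h$, erode the separation $\beta_i^{\BD}-\gamma_i^{\BD}$. This is precisely the role played by the choice of $\hp(\lambda,b)$ in Section~\ref{sec:lower-bound-hardcore}: by definition the error already at level $\hp$ is small enough to fit inside the factor $\exp(1.01(\omega b)^2/\lambda)$ of slack that Lemma~\ref{lem:uni2} absorbs, and since $\omega=(1+\delta)\ln b/b<1$ in the regime of interest, the errors from Corollary~\ref{cor:err1} shrink by a factor $\omega$ per level from then on. This geometric shrinkage dominates any amplification coming from the recursion, so the broadcasting-tree argument goes through essentially unchanged.
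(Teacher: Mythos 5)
Your proposal is correct and is essentially the paper's own argument: both reduce effectiveness to the uniform bound of Lemma~\ref{lem:uni2} pushed through the BW recursion conditioned on the root's spin, with the choice of $\hp(\lambda,b)$ and Corollary~\ref{cor:err1} keeping the relevant marginals under control. The only difference is presentational: the paper does not run a level-by-level induction on the gap $\beta_i-\gamma_i$, but instead writes $\muhbd{h}{\BD}(A(\sigma)=0\mid\sigma(r)=0)$ and $\muhbd{h}{\BD}(A(\sigma)=1\mid\sigma(r)=1)$ directly in terms of the unconditional probabilities at heights $h-1$ and $h-2$ and applies Lemma~\ref{lem:uni2} once, so your extra induction is unnecessary (though harmless).
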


Then, we are able to again bound $\bar{S}_{\hp,\textrm{BW}}^{\BD}$ for $\BD = \BDU{h}$ or $\BDL{h}$, proving the following theorem, which completes the proof of Part \ref{thm:above-lower} in Theorem~\ref{thm:main}.
Interested readers can look up Section~\ref{App:part2b} for the complete proofs.
\begin{theorem}
\label{thm:lowerbounderr}
Let $\delta > 0$, and let $\omega=(1+\delta)\ln b/{b}$. For all $b\geq \BW$, it is the case that%
\[
\Trel = \Omega\big(n^{d}\big)\text{,\quad where }d=\left(  1+\frac{\ln\left(
\lambda/(1.01\omega b)^2\right)}{2\ln b}\right).
\]
\end{theorem}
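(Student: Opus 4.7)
The plan is to follow the same skeleton as the proof of Theorem \ref{th:slownonunif}, but with the BW algorithm run from the level at height $\hp(\lambda,b)$ above the leaves rather than from the leaves themselves. Concretely, I would apply Theorem \ref{thm:sensitivity2} to the BW algorithm at parameter $\hp = \hp(\lambda,b)$: Proposition \ref{prop:eff2} gives effectiveness, so it suffices to show $\bar{S}_{\hp,\A}^{\BD} = O(n^{-d})$ for $\BD\in\{\BDU{h},\BDL{h}\}$. By the symmetry of the BW recursion together with the symmetry of $\BDU{h}$ and $\BDL{h}$ across siblings (sibling subtrees of equal depth carry identical boundary types up to the branching into $U$-copies and $L$-copies), the average sensitivity reduces, as in Equations \eqref{eq:s31}--\eqref{eq:s32} of Section \ref{sec:lower-bound-broadcasting}, to $n$ times $P_{\hp,\A}^\BD$ summed over the finitely many orbit types of leaves at height $\hp$ from the bottom; up to a constant this is bounded by $P_{\hp,\A}^\BD$ for a single representative $\hv$.

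Next I would invoke Lemma \ref{lem:uni1} to write
\[
P_{\hp,\A}^{\BD} \le \ExpSub{\sigma\sim\muhbd{h}{\BD}}{\prod_{i>0:\sigma(u_i)=0}\prod_{j=2}^{b}\ProbSub{\eta\sim\muhbd{i+\hp-1}{\BD_{i,j}}}{\A_\hp(\eta)=0}},
\]
and then apply Lemma \ref{lem:uni2}, which gives the uniform bound $\ProbSub{\eta}{\A_\hp(\eta)=0}\le 1.01^{1/b}/(1+\omega)$ for every $i\ge \hp$ and every boundary of type $\BDU{}$ or $\BDL{}$. This yields
\[
P_{\hp,\A}^{\BD} \le \ExpSub{\sigma}{\left(\tfrac{1.01\omega(1+\omega)}{\lambda}\right)^{\#\{i>0:\sigma(u_i)=0\}}}.
\]

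Now the spins $\sigma(u_i)$ along the path $\Pa$ form a Markov chain under $\muhbd{h}{\BD}$, but with \emph{inhomogeneous} transition probabilities that differ from the clean broadcasting case only because of boundary effects in the pendant subtrees. Corollary \ref{cor:err1} bounds the deviation of each relevant marginal from $\omega/(1+\omega)$ by $\omega^{i-1}\lambda/b$, which decays exponentially fast in the depth of the pendant subtree, so the perturbation parameter $\delta$ appearing in the inhomogeneous version of Lemma \ref{lem:asympunn} can be chosen uniformly small (in fact summable over $i$). Plugging the homogeneous rates $p = 1/(1+\omega)$, $q = \omega/(1+\omega)$, $a = 1.01\omega(1+\omega)/\lambda$ and negligible $\delta$ into the asymptotic expression of Lemma \ref{lem:asympunn} gives, exactly as in the proof of Theorem \ref{th:slownonunif}, $P_{\hp,\A}^{\BD} = O\bigl((1.01\omega/\lambda^{1/2})^h\bigr)$, and since $h=\Theta(\log_b n)$ this is $O(n^{-d})$ with $d = 1 + \ln(\lambda/(1.01\omega b)^2)/(2\ln b)$. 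Combined with Theorem \ref{thm:sensitivity2}, this proves the claimed lower bound $\Trel=\Omega(n^d)$.

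The main technical obstacle will be controlling the inhomogeneous perturbation rigorously. The point is that the constant-depth tails of the subtrees closer to the leaves (the bottom $\hp(\lambda,b)$ levels) are not accurately described by the broadcasting measure; this is precisely why the algorithm is shifted up by $\hp$ levels, and why $\hp(\lambda,b)$ is tuned so that $\ProbSub{\eta\sim\muhbd{i}{\BDL{i}}}{\eta(r)=0}$ lies within a factor $\exp(1.01(\omega b)^2/\lambda)$ of $1/(1+\omega)$. Verifying that this mild multiplicative error survives the $b$-th power in the BW recursion (so that the induction underlying Lemma \ref{lem:uni2} still closes with constant $1.01^{1/b}/(1+\omega)$) is the delicate step, and is the place where the choice $b\ge\BW$ in Lemma \ref{lem:b0h} is used. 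Everything else is a direct transcription of the broadcasting proof, with the path $\Pa$ now traversing only the top $h-\hp$ levels and contributing the same exponent $d$ since $\hp$ is a constant depending only on $\omega,b$.
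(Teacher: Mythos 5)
Your proposal follows essentially the same route as the paper's proof: reduce to the average sensitivity of the BW algorithm run at height $\hp(\lambda,b)$, bound $P_{\hp,\A}^{\BD}$ via Lemma \ref{lem:uni1} and the uniform bound of Lemma \ref{lem:uni2}, control the inhomogeneous path statistics with the second statement of Lemma \ref{lem:asympunn} (the perturbation being exactly the $\exp(1.01(\omega b)^2/\lambda)$ correction guaranteed by the choice of $\hp$), and conclude with Proposition \ref{prop:eff2} and Theorem \ref{thm:sensitivity2}. This matches the paper's argument, so no further comparison is needed.
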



\subsection{Proofs.}
\label{App:part2b}
\begin{proof}[Proof of Lemma \ref{lem:uni2}]
Let $\tg_{i,1}$ denote $\ProbSub{\eta\sim\muhbd{i}{\BDU{i}}}{A_{\hp}(\eta) = 0}$ and let $\tg_{i,2}$ denote  $\ProbSub{\eta\sim\muhbd{i}{\BDL{i}}}{A_{\hp}(\eta) = 0}$. Let $\bar{t}_{i} = b - t_{i}$ and $\bar{s}_{i} = b - s_{i}$ for simplicity.
Now, recall the recurrences given in Lemma \ref{lem:nonuni2}, which in this case take the form
\begin{align*}
\tg_{i+1,1} &  =\ProbSub{\eta\sim\muhbd{i}{\BDU{i}}}{\eta(r) = 1}
\left( 1-
   \left( 1-
      \tg_{i-1,1}^{t_{i}}
      \tg_{i-1,2}^{\bar{t}_{i}}
   \right)^{t_{i+1}}
   \left( 1-
      \tg_{i-1,1}^{s_{i}}
      \tg_{i-1,2}^{\bar{s}_{i}}
   \right)^{\bar{t}_{i+1}}
\right)\\
&+ \ProbSub{\eta\sim\muhbd{i}{\BDU{i}}}{\eta(r) = 0}
   \left( 1-
      \tg_{i,1}^{t_{i+1}}
      \tg_{i,2}^{\bar{t}_{i+1}}
   \right) \text{,}\\
\tg_{i+1,2} & = \ProbSub{\eta\sim\muhbd{i}{\BDL{i}}}{\eta(r) = 1}
\left( 1-
   \left( 1-
      \tg_{i-1,1}^{t_{i}}
      \tg_{i-1,2}^{\bar{t}_{i}}
   \right)^{s_{i+1}}
   \left( 1-
      \tg_{i-1,1}^{s_{i}}
      \tg_{i-1,2}^{\bar{s}_{i}}
   \right)^{\bar{s}_{i+1}}
\right)\\
&+ \ProbSub{\eta\sim\muhbd{i}{\BDL{i}}}{\eta(r) = 0}
   \left( 1-
      \tg_{i,1}^{s_{i+1}}
      \tg_{i,2}^{\bar{s}_{i+1}}
   \right).
\end{align*}
And the base cases are:
\begin{align*}
\tg_{\hp,1} &= \ProbSub{\eta\sim\muhbd{\hp}{\BDU{\hp}}}{\eta(r) = 0}  \text{,}\quad \tg_{\hp,2} = \ProbSub{\eta\sim\muhbd{\hp}{\BDL{\hp}}}{\eta(r) = 0} \text{,}\\
\tg_{\hp+1,1} & =\ProbSub{\eta\sim\muhbd{\hp+1}{\BDU{\hp+1}}}{\eta(r) = 0}
   \left( 1-
      \tg_{\hp,1}^{t_{\hp+1}}
      \tg_{\hp,2}^{\bar{t}_{\hp+1}}
   \right) \text{,}\\
\tg_{\hp+1,2} & =\ProbSub{\eta\sim\muhbd{\hp+1}{\BDL{\hp+1}}}{\eta(r) = 0}
   \left( 1-
      \tg_{\hp,1}^{s_{\hp+1}}
      \tg_{\hp,2}^{\bar{s}_{\hp+1}}
   \right) \text{.}\\
\end{align*}
Our purpose now, is to show by induction that $\tg_{i,1}\text{, } \tg_{i,2} \leq \fh$ for all $i \geq \hp$. The base case is simple:
\[
\tg_{\hp+1,1} \leq \tg_{\hp,1} \leq  \ProbSub{\eta\sim\muhbd{\hp}{\BDU{\hp}}}{\eta(r) = 0} \leq \frac{1}{1+\omega} \exp\left(\frac{1.01(\omega b)^2}{\lambda}\right),
\]
and the last term is less or equal to $\fh$ for $b \geq \BW$ [Lemma \ref{lem:b0h}]. Similarly, it is the case that $\tg_{\hp+1,2} \leq \tg_{\hp,2} \leq \fh$. Now, in general, assuming the inductive hypothesis, we get from the above recurrence, that
\begin{align*}
\tg_{i+1,1}^{b}
& \leq \left[  \frac{\omega}{1+\omega}\left(  1-\left(  1-\frac{1.01 \omega}{\lambda}\right)  ^{b}\right)
+\frac{1}{1+\omega}  \right]  ^{b}\exp\left(\frac{1.01(\omega b)^2}{\lambda}\right) \text{,}
\end{align*}
where the first term, just as deducted in the proof of Lemma \ref{lem:nonuni2}, is dominated by  $\frac{\exp(1.01(\omega b)^2/\lambda)}{(1+\omega)^b}$. Now, the hypothesis follows for $\tg_{i+1,1}$ (and seemingly for $\tg_{i+1,2}$), due to the fact that  $\exp(2(1.01)(\omega b)^2/\lambda)\leq 1.01 $ for $b\geq\BW$, proving the induction.
\end{proof}

\begin{proof}[Proof of Proposition \ref{prop:eff2}]
Notice the formulas,
\begin{align*}
\muhbd{h}{\BDU{h}}(A(\sigma)=0:\sigma(r)=0) &  =1-\left[
\muhbd{h-1}{\BDU{h-1}}(A(\sigma)=0)\right]  ^{t_{h}}\left[
\muhbd{h-1}{\BDL{h-1}}(A(\sigma)=0)\right]  ^{\bar{t}_{h}}\\
\muhbd{h}{\BDU{h}}(A(\sigma)=1:\sigma(r)=1) &  =\left[  1-\left(
\muhbd{h-2}{\BDU{h-2}}(A(\sigma)=0)\right)  ^{t_{h-1}}\left(
\muhbd{h-2}{\BDL{h-2}}(A(\sigma)=0)\right)  ^{\bar{t}_{h-1}}\right]  ^{t_{h}%
}\\
&  \cdot\left[  1-\left(  \muhbd{h-2}{\BDU{h-2}}(A(\sigma)=0)\right)
^{s_{h-1}}\left(  \muhbd{h-2}{\BDL{h-2}}(A(\sigma)=0)\right)  ^{\bar{s}_{h-1}%
}\right]  ^{\bar{t}_{h}},
\end{align*}
whith similar expressions for $\muhbd{h}{\BDL{h}}\left(  \cdot\right)$. Now,
from these recurrences and the bounds stated in Lemma \ref{lem:uni2}, we deduce that%

\begin{align*}
\muhbd{h}{\BDU{h}}(A(\sigma)=0,\sigma(r)=0)&-\muhbd{h}{\BDU{h}%
}(A(\sigma)=0)\muhbd{h}{\BDU{h}}(\sigma(r)=0) \\
& = \Omega\left(   1-\frac
{1.01}{(1+\omega)^b}-\frac{1.01^{1/b}}{1+\omega}
\right)  \gg0
\end{align*}
With the same result holding for $\muhbd{h}{\BDL{h}}( \cdot) $.
\end{proof}

\begin{proof}[Proof of Theorem \ref{thm:lowerbounderr}]
It goes along the lines of the proof of Theorem \ref{th:slownonunif}. Here, we take $\hp$ as $\hp(\lambda,b)$, as in Lemma \ref{lem:uni2}. Now, due to Lemma \ref{lem:uni1}, we have that
\[
\bar{S}^{\BD}_{\hp,\textrm{BW}} = O\left( \Exp{  \left(  \frac{1.01\omega(1+\omega)
}{\lambda}\right)^ { \#\left\{  i:\sigma(u_{i}) =0\right\} } } \right),
\]
and using the second statement of Lemma \ref{lem:asympunn}, we have that
\[
\Exp{\left(  \frac{1.01\omega(1+\omega)
}{\lambda}\right)^ { \#\left\{  i:\sigma(u_{i}) =0\right\} } } \lesssim \left(
\frac{1.01\omega\exp\left(\frac{1.01(\omega b)^2}{\lambda}\right) }{2\lambda}
\left[
1+\sqrt{1+\frac{4\lambda}{1.01\exp\left(\frac{1.01(\omega b)^2}{\lambda}\right)}}
\right]
\right)^{h-\hp},
\]
where the previous term is dominated by $\left(1.01\frac{\omega}{\lambda^{1/2}}\right)^{h-\hp}$. \ricardo{sentence edited} And just as in the proof of Theorem \ref{th:slownonunif},
\[
\bar{S}_A =O \left(\left[\frac{1.01\omega}{\lambda^{1/2}}\right]^h\right)
=O\left(n^{-\left[
1+\frac{\ln\left(\lambda/(1.01\omega b)^2\right)}{2\ln b}
\right]}\right)\text{.}
\]

Now, from Proposition \ref{prop:eff2}, the BW\ algorithm is effective for $\omega>(1+\delta)\ln{b}/{b}$, $b>b_0(\delta)$, therefore Theorem \ref{thm:sensitivity2} applies. The conclusion
follows.
\end{proof}
\section{{\protect\large Upper Bounds of the Relaxation Time via the Coupling
Method}}
\label{sec:upper-bound}
\smallskip
We will use the coupling technique in some of our proofs which upper bound the
relaxation time and the mixing time.
The mixing time $\Tmix$ for the Glauber dynamics is defined as the number
of steps, from the worst initial state, to reach within variation distance
$\le 1/2\eee$ of the stationary distribution.
It is an elementary fact that the mixing time gives a good upper bound on the relaxation time (see, e.g., \cite{LPW} for the following bound),
we will use this fact in our upper bound proofs:
\begin{equation}
\label{eqn:TmixTrel}
\Trel \le \Tmix + 1.
\end{equation}

 Given two copies $(X_t)$ and $(Y_t)$ of the Glauber dynamics,
 a coupling is a joint process $(X_t,Y_t)$ such that the evolution of each component
 viewed in isolation is identical to the Glauber dynamics, see \cite{LPW}
 for more background on the coupling technique.
  The Coupling Lemma \cite{Aldous} (c.f., \cite[Theorem 5.2]{LPW})
  guarantees that if,
 there is a coupling and time
 $t >0$, so that for every pair $(X_0, Y_0)$ of initial states,  $\ProbCond{X_t \neq Y_t}{X_0,Y_0} \le 1/2\eee$  under the coupling, then $\Tmix \le t$.

Before we show the main idea of proving the upper bound, let us introduction some notations for this section first. Let $\tau_{\rho}$ be the relaxation time of the following Glauber dynamics of the hard-core model
on the star graph $G$, where $\rho$ is a $b$ dimensional vector. The dynamics is defined as: if the root is chosen, it
will be occupied with probability $\lambda/(1+\lambda)$ if no leaf is
occupied; if the leaf $i$ is chosen, it will be occupied with probability
$\rho_{i}$ if the root is unoccupied. Therefore $\tau := \max_{\rho}\{\tau_{\rho}\}$ is defined as the
worst case relaxation time over all the possible choice of $\rho$.
Following the same block dynamics strategy \cite{Martinelli-lecturenotes} as in Section 2.3 of the paper by \cite{BKMP} (See also \cite{Molloy} and \cite{TVVY}),
it is not hard to show that the relaxation time
of the above Glauber dynamics is exactly the same as that of the natural block dynamics which updates the configurations of a whole subtree of the root in one step, and hence the
following lemma holds.
\begin{lemma}
\label{lem:block}
The relaxation time $T_{rel}$ of the Glauber dynamics of the hard-core model on the complete
tree of height $H$ is upper bounded by $\tau^{H}$ for any boundary condition on the leaves.
\end{lemma}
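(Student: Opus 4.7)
The plan is to prove the lemma by induction on $H$, combining a block-dynamics decomposition of the single-site chain with a direct identification of the block dynamics with the star-graph Glauber dynamics whose worst-case relaxation time is $\tau$.

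First I would set up block dynamics on $T_H$ (with boundary condition $\BD$) whose blocks are $B_0 = \{r\}$ and, for $i=1,\dots,b$, the subtree $B_i$ of height $H-1$ hanging off the $i$-th child $v_i$ of $r$ (inheriting the restriction of $\BD$ as its boundary). At each step one of the $b+1$ blocks is chosen uniformly and its configuration is resampled from $\muhbd{H}{\BD}$ conditioned on the rest. By the spatial Markov property, resampling $B_i$ is equivalent to first drawing $\sigma(v_i)$ from its correct conditional marginal --- Bernoulli$(\rho_i)$ when $\sigma(r)=0$, and $0$ when $\sigma(r)=1$, where $\rho_i$ is the marginal probability that $v_i$ is occupied under the Gibbs measure on $B_i$ induced by its inherited boundary --- and then refreshing the interior of $B_i$ at conditional equilibrium. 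Consequently the block dynamics, projected onto $(\sigma(r),\sigma(v_1),\dots,\sigma(v_b))$, is \emph{exactly} the star-graph Glauber dynamics with parameter vector $\rho$, while the interior coordinates remain at conditional stationarity after every step. A standard argument then gives that the block-dynamics spectral gap coincides with that of the star dynamics, so $\Trel^{\mathrm{block}} \le \tau_\rho \le \tau$.

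Next I would invoke the standard block-to-single-site Dirichlet-form comparison (see \cite{Martinelli-lecturenotes}), which yields
\[
\Trel(T_H) \;\le\; \Trel^{\mathrm{block}} \;\cdot\; \max_{0 \le i \le b}\, \Trel(B_i),
\]
where $\Trel(B_i)$ is the worst-case single-site relaxation time on $B_i$ over the possible conditionings from outside. The singleton block $B_0$ contributes a factor $\le 1$. Each $B_i$ with $i\ge 1$ is either a complete tree of height $H-1$ with a boundary (the case $\sigma(r)=0$), or such a tree with $v_i$ additionally pinned to $0$ (the case $\sigma(r)=1$); in the pinned case the chain decomposes into independent Glauber dynamics on the $b$ subtrees of $v_i$, each of height $H-2$ with a boundary, so both cases are handled by the induction hypothesis and give $\Trel(B_i)\le \tau^{H-1}$. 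Combining yields
\[
\Trel(T_H) \;\le\; \tau \cdot \tau^{H-1} \;=\; \tau^H,
\]
closing the induction. The base case $H=1$ is immediate: $T_1$ with boundary condition is itself a star graph with fixed leaves, so its Glauber dynamics equals a star dynamics for the appropriate $\rho$ and has relaxation time at most $\tau$.

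The main subtlety is the second step: one must carefully use the spatial Markov property to verify that a block resampling induces precisely the ``leaf update'' rule of the star dynamics at the top-level variable, and that the interior of each block is always at conditional equilibrium after the update --- this is what allows the block-dynamics spectral gap to match, rather than merely be bounded by something more pessimistic than, the star-graph dynamics' spectral gap. Once this identification is in place, the remaining steps are routine applications of well-known block-dynamics decomposition tools.
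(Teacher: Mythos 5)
Your proposal is correct and follows essentially the same route as the paper, which proves the lemma by citing the block-dynamics strategy of Martinelli/BKMP with exactly your decomposition (the root as a singleton block plus the $b$ subtrees of height $H-1$), asserting that the block dynamics has the same relaxation time as the star-graph chain with the induced marginals $\rho_i$, and then recursing. If anything, your write-up is more explicit than the paper's one-sentence argument, since you spell out the induction, the pinned case $\sigma(r)=1$, and correctly flag the identification of the block-dynamics gap with the star-chain gap as the step requiring care.
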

Note that, the relaxation time is quite sensitive with respect to the boundary conditions. Especially, in the paper by \cite{MSW-soda}, they show that when the boundary conditions are even (or odd) meaning that occupied all the leaves when the height is even (odd) and unoccupied all the leaves when the height is odd (even), the mixing time is actually $O(n\ln n)$. In this paper we are dealing with any kind of boundary condition, and in the lower bound part, we show the boundary conditions that slow down the Glauber dynamics actually exist. The lower bound on the relaxation time for the Glauber dynamics under that boundary conditions roughly matches up with the upper bound here.

We need to bound the relaxation time $\tau$ of the Glauber dynamics on star graphs for different cases with respect to $\rho$.
To do this, we first use the so-called maximum one step coupling to bound the mixing time and then by Eq.\eqref{eqn:TmixTrel} we get an upper bound on the relaxation time $\tau$.
The maximal one-step coupling, originally studied for colorings by Jerrum \cite{Jerrum} gives a way to upper bound the mixing time of the Glauber dynamics on general graphs.
The coupling $(X_t,Y_t)$ of the two chains is done by choosing the same random vertex $v$ for changing the states at step $t$ and maximizing the probability of the two chains choosing the same update for the state of $v$.
Thus, if none of neighbors of $v$ is occupied in neither $X_t$ and $Y_t$, $v$ will be occupied/unoccupied in both chains at time $t+1$ with the correct marginal probability. In all other cases, the update choices for $X_{t+1}(v)$ and $Y_{t+1}(v)$ are coupled arbitrarily.
By analyzing the maximum one-step coupling, we will show a series of lemmas for different $\rho$, which give the upper bound on the relaxation time $\tau$.
\begin{lemma}
\label{lemma:up1}
If $\sum \rho_{i} \le 4\ln\ln b$ and for all $i$, $\rho_{i} \le 1-1/\ln b$, then $\tau = O(b^{1+o(1)})$, where $o(1)$ is a term that goes to zero as $b$ goes to infinity.
\end{lemma}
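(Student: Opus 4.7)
The plan is to establish $\tau = O(b^{1+o(1)})$ uniformly over all $\rho$ meeting the hypotheses. Since $\Trel \le \Tmix + 1$ by \eqref{eqn:TmixTrel}, it suffices to bound the mixing time of the star-graph Glauber dynamics via the maximum one-step coupling described just before the lemma. The star has a convenient product structure: conditioned on the root being unoccupied, the leaves evolve independently, and the root can only become occupied when all leaves in the given chain are unoccupied. This decomposition will organize the coupling analysis into a ``leaf phase'' and a ``root phase''.

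First I would analyze a single leaf disagreement. Suppose $X_t$ and $Y_t$ both have the root unoccupied and differ only at leaf $v_i$. Choosing $v_i$ (probability $1/(b+1)$) and applying the maximum coupling of the two Bernoulli$(\rho_i)$ updates produces agreement with probability $\min(\rho_i, 1-\rho_i) \ge 1/\ln b$, where the lower bound is exactly where the hypothesis $\rho_i \le 1 - 1/\ln b$ is used (disagreement forces $\rho_i > 0$). So any isolated leaf disagreement is fixed in $O(b \ln b)$ expected steps, provided the roots remain coupled and unoccupied. Next I would use $\sum_i \rho_i \le 4\ln\ln b$ to bound the number of occupied leaves uniformly in time: conditioned on the root being unoccupied, the stationary measure on the leaves is product-Bernoulli with parameters $\rho_i$, so the expected occupancy is at most $4\ln\ln b$, and standard tail bounds make this value stay $O(\ln\ln b)$ throughout a $b^{1+o(1)}$-step coupling window with high probability. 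This in turn implies that genuine root-flips are infrequent, because a root can transition to occupied only when all $b$ leaves happen to be simultaneously unoccupied.

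Combining these observations, with a union bound over the $O(\ln \ln b)$ leaves typically in disagreement and over the $O(b \ln b)$ time it takes each disagreement to resolve, yields $\Tmix = b^{1+o(1)}$ and hence $\tau = O(b^{1+o(1)})$. The main obstacle is the correlated effect of root-flips: if one chain's root flips from unoccupied to occupied, every leaf that was occupied in that chain is instantly forced to be unoccupied, so each such flip can simultaneously create up to $O(\ln \ln b)$ new leaf disagreements. I plan to handle this by defining epochs during which the roots are coupled (relying on the maximum coupling of the root update when all leaves agree), bounding the number of root-flip events inside the mixing horizon using the occupancy tail bound above, and restarting the leaf-fixing argument after each flip with only $O(\ln \ln b)$ leaves to resynchronize; this produces the claimed $b^{1+o(1)}$ factor after multiplying the per-leaf cost by the number of disagreements and the number of epochs.
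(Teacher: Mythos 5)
There is a genuine gap. The crux of the paper's proof---and of any proof of this lemma---is a quantitative lower bound on the probability that \emph{all} $b$ leaves are simultaneously unoccupied in the chain whose root is unoccupied, because that is the only way the root disagreement (the worst-case start $r\notin X_0$, $r\in Y_0$) can ever be resolved, and the bound must be uniform in $\lambda$. This is exactly where the two hypotheses combine: $1-\rho_i\ge e^{-\rho_i\ln\ln b}$ for $\rho_i\le 1-1/\ln b$, hence $\prod_i(1-\rho_i)\ge e^{-4\ln^2\ln b}=b^{-o(1)}$ once $\sum_i\rho_i\le 4\ln\ln b$. The paper then runs rounds of $\Theta(b\ln b)$ steps, uses a coupon-collector event plus the monotonicity facts $\|X_t\|\le|B_t|$ and $\|X_t\|\ge\|Y_t\|$ to conclude that with probability at least $e^{-4\ln^2\ln b}$ all leaves are unoccupied in both chains just before a root update, at which point the chains coalesce completely; this yields $\Tmix=O(b\ln b\cdot e^{4\ln^2\ln b})=O(b^{1+o(1)})$. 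Your proposal never produces (or even identifies) this $e^{-4\ln^2\ln b}$ factor: you use $\sum_i\rho_i\le 4\ln\ln b$ only to say the expected occupancy is $O(\ln\ln b)$, but a small expected occupancy does not by itself lower-bound the probability of \emph{zero} occupancy, which in the worst case is as small as $b^{-o(1)}$ and is the dominant cost in the mixing-time bound. Your epoch scheme assumes the roots are already coupled and treats root flips only as a nuisance that creates leaf disagreements, so the central difficulty---how long it takes, and with what probability, for the root disagreement itself to be removed---is left unaddressed (it is precisely the all-leaves-unoccupied event you are not controlling).

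There is also a local error in your leaf analysis: under the maximal one-step coupling, if the two roots agree (both unoccupied) and the chains differ only at leaf $v_i$, then choosing $v_i$ resolves the disagreement with probability $1$, not $\min(\rho_i,1-\rho_i)$, since the two conditional update distributions are identical; the quantity $1-\rho_i\ge 1/\ln b$ is relevant only while the roots disagree (Bernoulli$(\rho_i)$ versus forced unoccupied), and in that regime leaf disagreements are continually recreated, so resolving them ``one at a time'' cannot terminate until the roots couple. Finally, your tail-bound claim that the occupancy stays $O(\ln\ln b)$ throughout a $b^{1+o(1)}$-step window is not what a union bound over that many steps gives (you would only get $O(\ln b)$), and the chain is not started in stationarity, so even that step needs an argument. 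The correct use of the hypothesis $\rho_i\le 1-1/\ln b$ is in the inequality $1-\rho_i\ge e^{-\rho_i\ln\ln b}$ feeding the product bound, not in a per-leaf coupling probability.
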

\begin{lemma}
\label{lemma:up2}
If $\sum \rho_{i} \ge 4\ln\ln b$, then $\tau = O( (\lambda+1) b\ln b)$.
\end{lemma}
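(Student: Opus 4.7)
The plan is to bound the mixing time via the maximal one-step coupling, as in the proof of Lemma~\ref{lemma:up1}, and then invoke~\eqref{eqn:TmixTrel} to get $\Trel\leq\Tmix+1$. The hypothesis $\Lambda:=\sum_i\rho_i\geq 4\ln\ln b$ implies that in stationarity the root is almost surely unoccupied: a direct detailed-balance calculation shows $\pi(r=1)=\lambda\prod_i(1-\rho_i)/(1+\lambda\prod_i(1-\rho_i))\leq\lambda(\ln b)^{-4}$, and, conditionally on $r=0$, the leaves are independent Bernoullis so $\pi(\text{all leaves empty}\mid r=0)\leq\prod_i(1-\rho_i)\leq(\ln b)^{-4}$. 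Call a configuration \emph{shielded} if $r=0$ and at least one leaf is occupied. The key structural point is that a shielded chain cannot have its root flip to $1$ until every leaf becomes empty again, and when both chains are shielded the maximal coupling reduces to $b$ independent Bernoulli Glauber chains on the leaves, so each leaf update is automatically coupled.

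I would split the coupling time into a burn-in phase of length $T_1=O((\lambda+1)(b+1))$ and a coupling phase of length $T_2=O(b\ln b)$. In the burn-in phase, starting from the worst case $(1,\emptyset)$, the root flips to $0$ in expected time $(\lambda+1)(b+1)$ (it is picked with probability $1/(b+1)$ and each pick lands at $0$ with probability $1/(1+\lambda)$); once $r=0$, a leaf becomes occupied within expected $O((b+1)/\Lambda)$ further steps. Thus each chain independently reaches the shielded regime inside $[0,T_1]$ with probability at least $9/10$ by a Markov inequality, and a short additional argument (using that the shielded set is nearly invariant on this time scale) shows that both chains are simultaneously shielded at a common time inside $[0,T_1]$ with probability at least $4/5$. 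In the coupling phase, starting from a pair of shielded configurations, every leaf that is chosen gets coupled, and a coupon-collector bound shows that in $T_2=O(b\ln b)$ steps every leaf has been updated at least once with probability $1-o(1)$, forcing $X_{T_1+T_2}=Y_{T_1+T_2}$. Summing gives $\Tmix\leq T_1+T_2=O((\lambda+1)b\ln b)$, which yields the lemma.

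The main obstacle is showing that during the coupling phase both chains remain shielded long enough for the coupon collection to complete. The stationary estimate $\pi(\text{all empty}\mid r=0)\leq(\ln b)^{-4}$ is too weak when applied naively by a union bound over $T_2$ steps, since $T_2\cdot(\ln b)^{-4}$ need not be $o(1)$. I would handle this by partitioning $T_2$ into $O(\ln b)$ windows of length $O(b)$, using the fact that inside the shielded regime the leaf chain is a product of Bernoulli Glauber chains that mixes to its product-Bernoulli equilibrium in $O(b)$ steps, and then bookkeeping that (i) each individual excursion out of the shielded regime lasts only $O(\lambda+1)$ expected steps before the root flips back and some leaf is reoccupied, and (ii) such an excursion uncouples at most $O(\lambda+1)$ leaves, which are then recoupled in the subsequent shielded interval. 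An inductive union bound across the $O(\ln b)$ windows yields a total expected ``unrecovered leaves'' budget of $o(1)$ at time $T_1+T_2$, giving $\Pr[X_{T_1+T_2}\neq Y_{T_1+T_2}]\leq 1/(2\eee)$ as required.
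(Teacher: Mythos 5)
Your high-level strategy is the same as the paper's (drive the root to $0$ in both chains in an initial phase of expected length $O((\lambda+1)b)$, then couple the leaves over $O(b\ln b)$ steps while occupied leaves ``block'' the root), but the step you yourself flag as the main obstacle --- keeping both chains shielded through the coupling phase --- is where your argument breaks, and the quantitative claims you substitute for it are not correct. First, you reject the union bound over all $T_2=O(b\ln b)$ steps as too weak, but the point you are missing is that the shield only matters at the steps where the \emph{root} is selected, and w.h.p.\ the root is selected only $O(\ln b)$ times in $T_2$ steps. This is exactly how the paper closes the gap: it fixes a set $S$ of leaves refreshed during $b$ steps in which the root is untouched, shows via negative association and Hoeffding that $\sum_{i\in S}\rho_i\ge c_0\ln\ln b$ with $c_0>1$ with constant probability, and then union bounds the event ``the blocking set $A_t\subseteq S$ is empty at a root-selection time'' over the $O(\ln b)$ root selections, with per-time probability $(\ln b)^{-c_0}$, giving failure probability $o(1)$. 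Your substitute bound $\pi(\text{all leaves empty}\mid r=0)\le(\ln b)^{-4}$ is a stationary estimate applied at fixed times of a non-stationary chain, and your justification that the leaf chain ``mixes to its product-Bernoulli equilibrium in $O(b)$ steps'' is false (coupon collection forces $\Theta(b\ln b)$); what one can legitimately extract after an $O(b)$ window is precisely the paper's weaker statement about the refreshed set $S$, which you do not establish.

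Second, your excursion bookkeeping is quantitatively wrong in a way that matters. If an excursion occurs (some chain's root flips to $1$), the root is refreshed only with probability $\tfrac{1}{(b+1)(1+\lambda)}$ per step, so the excursion lasts $\Theta((\lambda+1)b)$ expected steps, not $O(\lambda+1)$; during it the two roots disagree, and every leaf selected in that window can uncouple (forced empty in the chain with occupied root, Bernoulli$(\rho_i)$ in the other), so up to order $b$ leaves --- not $O(\lambda+1)$ --- become uncoupled. Moreover, once the roots disagree you are back at the worst-case starting configuration, so ``recoupling in the subsequent shielded interval'' is not a local repair but a restart of the entire argument; an inductive ``budget'' over $O(\ln b)$ windows therefore does not close. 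The same issue undermines the burn-in claim that both chains are ``simultaneously shielded at a common time'' with probability $4/5$: making the shield persist from each chain's individual entry time to a common time is again the blocking argument you have not proved. To repair the proof you should replace the excursion/window accounting by the paper's scheme: condition on the two events $\E_1$ (root unoccupied in both chains at a root update within the first $b$ steps, probability $\Omega(1/(\lambda+1))$) and $\E_2$ (a refreshed leaf set $S$ with $\sum_{i\in S}\rho_i>c_0\ln\ln b$), compare with an auxiliary independent-leaf process to bound $\Pr[A_t=\emptyset$ at a root-selection time$]$, and union bound over the $O(\ln b)$ root selections; the round structure with $P_T=\Omega(1/(\lambda+1))$ per round of $T=20b\ln b$ steps then gives $\Tmix=O((\lambda+1)b\ln b)$ and the lemma via $\Trel\le\Tmix+1$.
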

\begin{lemma}
\label{lemma:up3}
If there exists an $i$ such that $\rho_i > 1-1/\ln b$, then $\tau = O((\lambda+1) b\ln b)$.
\end{lemma}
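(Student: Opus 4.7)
The plan is to bound the mixing time $\Tmix$ via the maximum one-step coupling described above, and then invoke $\Trel \le \Tmix + 1$ from \eqref{eqn:TmixTrel}. At each step the coupling selects a common vertex in both copies of the chain and maximally couples the two updates; whenever the two chains share the same constraint on the selected vertex, maximum coupling sets its spin to the same value in both chains. The key structural observation is that the distinguished leaf $i$ with $\rho_i > 1 - 1/\ln b$ acts as an almost-permanent blocker of the root: once leaf $i$ is occupied the root must be unoccupied, and then updates to any other leaf use a common conditional distribution in both chains, equalizing that leaf in one step.

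I would split the coupling analysis into two phases, calling a state ``good'' if the root is unoccupied and leaf $i$ is occupied. In Phase 1 I show that, starting from any pair of initial states, both chains simultaneously occupy good states within $O((\lambda+1)b\ln b)$ steps with high probability. For a single chain, the only way to change the root is to select it (rate $1/(b+1)$); if the root is currently occupied then all leaves are zero and the selected root flips to zero with probability $1/(1+\lambda)$, so the root first becomes unoccupied in expected $O((\lambda+1)b)$ steps. Once the root is unoccupied, each selection of leaf $i$ sets it to $1$ with probability at least $1 - 1/\ln b$, and a re-selection of the root cannot flip it back to $1$ while any leaf is occupied, so leaf $i$ becomes occupied in a further $O(b)$ expected steps. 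A standard Markov-and-union-bound argument over $O(\ln b)$ successive trial windows and over both chains yields the Phase 1 bound.

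In Phase 2 I would argue that, starting from both chains in good states, the remaining $b-1$ leaves couple within an additional $O(b\ln b)$ steps. Whenever a leaf $j\neq i$ is selected, both chains have root $=0$, so the maximum coupling identifies the two updates and equalizes leaf $j$ from that step onward; after $O(b\ln b)$ steps each such leaf has been picked $\Omega(\ln b)$ times with high probability, so all $b-1$ of them are coupled. The complication is that during Phase 2, leaf $i$ may itself be re-selected and flip to $0$ (probability at most $1/\ln b$ per selection), after which a subsequent root selection might flip the root to $1$, leaving the good region. Because vertex choices are shared, such excursions occur in both chains simultaneously and preserve the equality of leaf $i$, and each excursion is resolved in an additional $O((\lambda+1)b)$ expected steps by the Phase 1 analysis, so summing over the $O(\ln b)$ selections of leaf $i$ during Phase 2 the total excess time remains $O((\lambda+1)b\ln b)$.

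The main obstacle is the bookkeeping in Phase 2: showing that the brief excursions out of the good region do not desynchronize any already-coupled leaf, and that the probability of a critical desynchronizing event (one chain having all leaves zero while the other does not at a root-selection step) is small enough to keep the overall mixing bound $O((\lambda+1)b\ln b)$. The condition $\rho_i > 1 - 1/\ln b$ is used both to make leaf $i$'s excursions from the occupied state rare and to guarantee that the conditional update distributions match on every non-$i$ leaf whenever the root is unoccupied in both chains, which is exactly what the maximum coupling needs. With this bookkeeping in place, the coupling lemma yields $\Tmix = O((\lambda+1)b\ln b)$, and \eqref{eqn:TmixTrel} finishes the proof.
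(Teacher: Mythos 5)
Your overall strategy --- maximum one-step coupling, using leaf $i$ as a blocker of the root, and concluding via \eqref{eqn:TmixTrel} --- is the same as the paper's, but the way you organize the coupling leaves the crucial step unproved, and the claim you would need to close it is false as stated. In Phase 2 you assert that excursions out of the good region ``occur in both chains simultaneously and preserve the equality of leaf $i$'' and that they ``do not desynchronize any already-coupled leaf.'' This is precisely where the argument can break: while some leaf $j\neq i$ is still uncoupled, a root selection at a moment when one chain has all leaves unoccupied and the other does not desynchronizes the roots with probability $\lambda/(1+\lambda)$ (the maximal coupling cannot identify a forced $0$-update with a $\lambda/(1+\lambda)$ occupation); once the roots disagree, every subsequent selection of an already-coupled leaf can undo that coupling (one chain forces it to $0$, the other occupies it with probability $\rho_j$), and leaf $i$ itself can come apart the same way. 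So coupledness is not monotone along your phases, and the ``bookkeeping'' you defer is not bookkeeping but the actual content of the proof; you also make life harder by aiming at a with-high-probability coalescence bound inside a single window, which is more than is needed for a relaxation-time bound.

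The paper avoids all of this by reducing Lemma \ref{lemma:up3} to the round structure already set up for Lemma \ref{lemma:up2}: work in rounds of $T=20b\ln b$ steps and show coalescence with probability only $P_T=\Omega(1/(\lambda+1))$ per round. One pays the $1/(\lambda+1)$ once, for the event $\E_1$ that at the first root selection the root becomes unoccupied in both chains; then one considers the stopping time $T_s$, the first later root-selection time at which the blocker is absent, and shows $\Prob{T_s<2b\ln b}<1/2$. For Lemma \ref{lemma:up3} the blocker is simply leaf $i$, so it suffices to bound the probability that leaf $i$ is unoccupied at each of the $O(\ln b)$ root-selection times in the window (this is the only place $\rho_i>1-1/\ln b$ is used), in place of the representative set $S$ and the Hoeffding step of Lemma \ref{lemma:up2}. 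On the event $\{T_s\ge 2b\ln b\}$ the root stays unoccupied in both chains throughout the window, so leaf couplings are never undone and the whole star coalesces once every vertex has been selected; a round in which the blocker ever fails at a root selection is simply discarded and the argument restarts, so no excursion has to be controlled at all. Iterating rounds gives $\Tmix=O(b\ln b/P_T)=O((\lambda+1)b\ln b)$ and hence the claimed bound on $\tau$ via \eqref{eqn:TmixTrel}.
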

The intuition behind is that the typical behaviors of the coupling chain change with respect to the leaves marginal probability $\rho$. When $\rho$ are all tiny (Lemma \ref{lemma:up1}), the coupling chain has a large chance to have all the leaves unoccupied in both $(X_t)$ and $(Y_t)$ and hence can be coupled at the state where the root is occupied in both chains with a good probability; While when the sum of $\rho$ is big (Lemma \ref{lemma:up2} and \ref{lemma:up3}) suggesting that there is a good chance that one of the leaves is occupied always, the coupling chain is easier to get coupled once the root is not occupied in both chains.
We delay the proofs of these lemmas in Section \ref{section:up_proofs}. By applying Lemma \ref{lem:block} to Lemma \ref{lemma:up1}, \ref{lemma:up2} and \ref{lemma:up3}, we get the conclusion that the relaxation time is upper bounded by $O(n^{1+\ln(\lambda+1)/\ln b+o_b(1)})$ for any $\lambda > 0$. Recall that the relationship between $\omega$ and $\lambda$ is $\lambda = \omega (1+\omega)^b$ and in this paper we are mainly interested in the cases when $\omega = (1+\delta)\ln b/b$ for any constant $\delta>-1$. Hence, in terms of $\omega$, if $-1 < \delta \le 0$ the relaxation time is upper bounded by $n^{1+o_b(1)}$  and if $\delta > 0$ the relaxation time is upper bounded by $n^{1+\delta+o_b(1)}$, which proves Theorem \ref{thm:main}.

\subsection{Proofs.}
\label{section:up_proofs}
With out lose of generality, we assume that the root is unoccupied in $X_0$ and occupied in $Y_0$. Readers will see later in the proofs that these are indeed the worst cases scenarios.
We prove the lemmas by analyzing the coupling in rounds, where each round consists of $T:=20 b\ln{b}$ steps.
The following analysis says
that if in each round we have a good probability of coalescing (i.e., achieving $X_t=Y_t$), then we will have a good upper bound on the mixing time (and hence the relaxation time).
Suppose that in the coupling process, starting from any pair of initial states, after time $T$, with probability $P_T$ we have $X_T = Y_T$.
By repeatedly applying this assumption $i$ many times we have, for all $(x_0,y_0)$,
\[
\ProbCond{X_{2iT} \neq  Y_{2iT}}{X_0 = x_0, Y_0 = y_0} \leq
(1-P_T)^{2i}\leq 1/2\eee
\]
for $i=1/P_T$.
Therefore, by applying the Coupling Lemma, mentioned in Section~\ref{sec:background}, the mixing time is $O(b\ln b /P_T)$. Now, the only thing left is to lower bound $P_T$ for different $\rho$ assuming that the root $r\notin X_0$ and $r\in Y_0$. Let us define for any configuration $X_t \in \Omega(G)$, $\|X_t\|:= |\{ \ell \in V(G): deg(\ell)=1 \textrm{ and } \ell \in X_t\}|$, the number of leaves of the star graph $G$ that are in the independent set.
\begin{proof}[Proof of Lemma \ref{lemma:up1}.]
In this case, we will try to couple all of the leaves to unoccupied and then couple the root. Let $T_0$ be the last time the root is chosen to change the state in the $T$ steps running of the coupling chain. It is a simple fact from the coupon collector problem that with high probability, $T_0 > 10b\ln b$ and all the leaves have been chosen at least once before $T_0$. Note that throughout this section ``with high probability'' means the probability goes to $1$ as $b$ goes to infinity. Conditioning on this, consider the following process $(B_t)$:
\begin{itemize}
\item $B_t \subseteq V(G)$;
\item For each time $t$ the process $(B_t)$ chooses a leaf $v_i$ to update: $B_{t+1}= B_{t}\cup {v_i}$ with probability $\rho_i$ and $B_{t+1}= B_{t}\setminus {\{v_i\}}$ with probability $1-\rho_i$;
\item The process will ignore any update of the root and the state of the root at any time.
\end{itemize}
By the conditioning we assume, we know that after $10b\ln b$ steps, every leaf has been updated at least once. Therefore the chance that all of the leaves are not in $B_t$ (i.e. $(B_t)$ is an empty set) is \[
\prod_{i=1}^{b} (1-\rho_i) \ge \prod_{i=1}^{b}e^{-\rho_i \cdot \ln\ln b}\ge e^{-4\ln^2\ln b},
\]
where $1-x > e^{-x \cdot \ln\ln b}$ holds when $x < 1 - 1/\ln b$. There is a straightforward coupling of $(X_t)$ and $(B_t)$ such that for any $t\ge 0$, $\|X_t\| \le |B_t|$.
And also, by the natural of the maximum one-step couplings with the assumption $r\notin X_0$ and $r\in Y_0$, for any $t\ge 0$, $\|X_t\| \ge \|Y_t\|$. Therefore, we can conclude that with probability at least $ e^{-4\ln^2\ln b}$,
at time $T_0 - 1$, all the leaves are unoccupied in both $X$ and $Y$, and hence with probability $1$, when the coupling chain chooses the root to update at time $T_0$, we have $X_{T_0} = Y_{T_0}$. In conclusion, we show that
with probability at least $e^{-4\ln^2\ln b}$, we have $X_T = Y_T$, which implies that the mixing time and hence the relaxation time is $O(b\ln b \cdot  e^{4\ln^2\ln b}) = O(b^{1+o_b(1)})$, where $o_b(1)$ is a $O(\ln^2\ln b)/\ln b$ function.
\end{proof}
\begin{proof}[Proof of Lemma \ref{lemma:up2}.]
In this case, we want to first unoccupied the root in both chains and then couple the leaves. The key observation is that once the root is not occupied, it will not be occupied again since there is a good chance that one of the leaves is always occupied and hence ``blocking'' the root from being occupied.
First of all, it is easy to see that in the first $b$ steps, the coupling chain has a positive probability of choosing the root to change the state and at that moment, the chance of the root being unoccupied in both chains is at least $1/(\lambda+1)$. We denote this as event $\E_1$. Next, we want that in the following $b$ steps, with positive probability, the root will not be chosen to change the state, therefore the chain chooses $b$ leaves to change the states during these $b$ steps. Let the set of leaves that are chosen during these $b$ steps be $S$. We want to argue that with positive probability, $\sum_{i\in S} \rho_i > c_0 \ln\ln b$ for some constant $c_0 > 1$ so that $S$ is a set of good representatives of all the leaves. Let this be the event $\E_2$. By using the fact that for each leaf, the indicator random variable of whether the leaf is chosen or not during these $b$ steps are negatively associated to each other (c.f., Theorem 14 in \cite{DR}), we can still use the following Hoeffding bound (Theorem 2 in \cite{HF}) by Proposition 7 in \cite{DR}.
\begin{theorem}
Let $X_1,\dots,X_n$ be independent (or negatively associated) random variables with $a_i \le X_i \le b_i$. Let $X=\sum_i X_i$ and $\mu = E[X]$. Then the following inequality holds
\[
\Prob{|X-\mu|\ge t}\le 2\exp(-\frac{2t^2}{\sum_{i=1}^{n}(a_i - b_i)^2}).
\]
\end{theorem}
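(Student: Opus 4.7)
The plan is to prove Hoeffding's inequality by the standard exponential-moment (Chernoff) method, with one extra ingredient needed to accommodate negatively associated variables. First I apply Markov's inequality to $e^{s(X - \mu)}$ for any $s > 0$:
\[
\Prob{X - \mu \geq t} \le e^{-st}\, \Exp{\exp\!\bigl(s(X-\mu)\bigr)} = e^{-st}\, \Exp{\prod_{i=1}^n \exp\!\bigl(s(X_i - \Exp{X_i})\bigr)}.
\]
The core step is to pass the expectation inside the product. In the independent case this is equality. In the negatively associated case, the defining property of negative association is that for any coordinatewise non-decreasing functions $f_1,\ldots,f_n$, $\Exp{\prod_i f_i(X_i)} \le \prod_i \Exp{f_i(X_i)}$; applying this with $f_i(x) = e^{sx}$ (which is non-decreasing since $s>0$), and noting that centering by the constants $\Exp{X_i}$ preserves monotonicity, gives the desired factorization as an inequality.

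Next I would invoke Hoeffding's lemma for each factor: if $a_i \le X_i \le b_i$ almost surely, then
\[
\Exp{\exp\!\bigl(s(X_i - \Exp{X_i})\bigr)} \le \exp\!\Bigl(\tfrac{s^2 (b_i - a_i)^2}{8}\Bigr).
\]
This is a one-variable fact proved by convexity: writing $X_i$ as a convex combination of its endpoints and using convexity of $x \mapsto e^{sx}$ bounds the MGF by a convex combination of exponentials; after centering, a direct second-derivative analysis of $\log$ of that convex combination yields the stated Gaussian envelope. Multiplying across $i$ produces
\[
\Prob{X - \mu \ge t} \le \exp\!\Bigl(-st + \tfrac{s^2}{8}\sum_{i=1}^n (b_i - a_i)^2\Bigr).
\]
Optimizing over $s > 0$ with the choice $s^\ast = 4t/\sum_i (b_i - a_i)^2$ collapses the right-hand side to $\exp\!\bigl(-2t^2/\sum_i (b_i - a_i)^2\bigr)$.

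Finally I treat the lower tail symmetrically. The variables $-X_1,\ldots,-X_n$ are independent (resp., negatively associated) whenever the $X_i$ are, since negative association is preserved under coordinatewise monotone transformations, and $-X_i$ lies in $[-b_i, -a_i]$, an interval of the same length. Applying the upper-tail bound to $-X$ gives $\Prob{\mu - X \ge t} \le \exp\!\bigl(-2t^2/\sum_i (b_i - a_i)^2\bigr)$; a union bound over the two tails produces the factor of $2$ in the statement. The only genuine obstacle is making sure the MGF factorization really holds under negative association; once that observation is in place, the rest is the classical textbook argument, and no structure of the hard-core model or of the tree is used.
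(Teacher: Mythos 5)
Your proof is correct and is essentially the classical argument behind this statement, which the paper itself does not prove but imports from Hoeffding~\cite{HF} together with the observation of Dubhashi and Ranjan~\cite{DR} that the Chernoff--Hoeffding method survives negative association: Markov's inequality on the exponential moment, factorization of the moment generating function (an equality under independence, an inequality under negative association since $e^{sx}$ is non-negative and non-decreasing), Hoeffding's lemma for each bounded summand, optimization in $s$, and negation plus a union bound for the two-sided form. The only nitpick is that the product inequality $\Exp{\prod_i f_i(X_i)}\le\prod_i\Exp{f_i(X_i)}$ is a consequence of the covariance definition of negative association (proved by induction, using non-negativity of the $f_i$) rather than the definition itself, but this does not affect the validity of the argument.
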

In our case, $X_i = \rho_i$ if the leaf $i$ is chosen in $b$ steps and otherwise zero.
Therefore, since in our case $\sum_{i=1}^{n}(a_i - b_i)^2\le (\sum_{i=1}^{n}(b_i - a_i))^2 = (4\ln\ln b)^2$, $\mu \ge 4(1-1/e)\ln\ln b$ and $t = \delta \mu$ for some properly chosen $\delta > 0$, it is straightforward to show that with positive probability, the event $\E_2$ happens. Note that the events $\E_1$ and $\E_2$ are actually independent. And from now on, we are conditioning on $\E_1$ and $\E_2$.

For each $t > 2b$, let $A_t:= \{\ell \in S: \ell \in X_t \textrm{ and } \ell \in Y_t\}$ be the set of leaves in $S$ such that they are in the independent set at time $t$ in both $X_t$ and $Y_t$.
Let the stopping time $T_s$ be the first time $t > 2b$ such that $A_t = \emptyset$ and the root is chosen. If we can show that $\Prob{T_s < 2b\ln b} < 1/2$, then with positive probability, the root is always blocked from changing to occupied since $A_t \neq \emptyset$. Hence once the chain chooses all the leaves during $2b\ln b-2b$ steps, we will have $X_T = Y_T$. Finally, because the events $\E_1$ and $\E_2$ happen with probability $\Omega(1/(\lambda+1))$, we can conclude that with probability $\Omega(1/(\lambda+1))$, $X_T = Y_T$.

In order to show that $\Prob{T_s < 2b\ln b} < 1/2$,
we will use the same coupling strategy as in the last proof. Let $(C_t)$ be a stochastic process that update the states of $|S|$ many independent vertices in the following way:
\begin{itemize}
\item The process starts at time $t = 2b+1$ with a random starting configuration;
\item For any $t$, $C_t \subseteq S$;
\item For each time $t$ the process $(C_t)$ chooses a vertex $v\in V(G)$ randomly;
\item If $v = v_i \in S$ then apply the following update rule: $C_{t+1}= C_{t}\cup {v_i}$
      with probability $\rho_i$ and $B_{t+1}= B_{t}\setminus {\{v_i\}}$ with probability $1-\rho_i$.
\end{itemize}
Let the stopping time $T_c$ be the first time $t>2b$ such that $C_t = \emptyset$ and the root is chosen. We want to couple $(C_t)$ and $(X_t,Y_t)$ in such a way that
$T_s > T_c$ for all runs and then if we can show that $\Prob{T_c < 2b\ln b} < 1/2$, by the coupling, we are done.

In fact, it is easy to show that $\Prob{T_c < 2b\ln b} < 1/2$ (any positive constant will do) by using
union bound combining with the following two facts: for any fix time $t > 2b$, the probability that $C_t = \emptyset$ is upper bounded by $(\ln b)^{-c_0}$ with $c_0 > 1$; with high probability the
root will be chosen at most $O(\ln b)$ times during $T$ steps. Now we just need a valid coupling.

We couple $(C_t)$ with the coupling chain $(X_t,Y_t)$ from time $t = 2b+1$ in the following way:
\begin{itemize}
\item Initially, for $t=2b$, $v\in C_t$ if and only if $v\in A_t$.
\item For any time $t$, both of them pick the same vertex $v$ to update and if $v\in S$, $v\in C_t$ if and only if $v\in A_t$.
\end{itemize}
It is easy to verify that conditioning on the events $\E_1$ and $\E_2$,
 this coupling is valid and $T_s > T_c$, which completes the proof.
\end{proof}
\begin{proof}[Proof of Lemma \ref{lemma:up3}.]
This case is the same as the last one. However, we can just use the leaf $i$ to block the root since it has a very high probability of being occupied, i.e., for the probability bound concerning $T_s$, it is enough to bound the probability that at a given time $t$, the leaf $i$ is occupied.
\end{proof}

\section*{Acknowledgments}
We are grateful to Prasad Tetali for many helpful discussions.

\end{document}